\def\itm#1{\rm ({#1})} 
\def\itmit#1{\itm{\it #1\,}} 
\def\rom{\itmit{\roman{*}}}
\def\endofClaim{\hfill\scalebox{.6}{$\Box$}}
\def\vi{\bm{i}}
\def\l{\ell}
\def\phi{\varphi}
\def\le{\leqslant}
\def\ge{\geqslant}
\def\CC{\mathbb{C}}
\def\FF{\mathbb{F}}
\def\PP{\mathbb{P}}
\def\RR{\mathbb{R}}
\def\ZZ{\mathbb{Z}}
\newcommand{\setbuilder}[2]{\left\{#1:#2\right\}}
\DeclareMathOperator{\rank}{rank}
\DeclareMathOperator{\cat}{cat}
\newtheorem{theorem}{Theorem}[section]
\newtheorem{lemma}[theorem]{Lemma}
\newtheorem{corollary}[theorem]{Corollary}
\newtheorem{prop}[theorem]{Proposition}
\theoremstyle{definition}
\theoremstyle{remark}
\newcommand{\oldqed}{}
\title{Ordinary planes, coplanar quadruples, and space quartics}
\author{Aaron Lin\footnote{Department of Mathematics, London School of Economics and Political Science, United Kingdom.} \and Konrad Swanepoel\footnotemark[1]}
\date{}
\begin{document}

\maketitle

\begin{abstract}
An ordinary plane of a finite set of points in real $3$-space with no three collinear is a plane intersecting the set in exactly three points.
We prove a structure theorem for sets of points spanning few ordinary planes.
Our proof relies on Green and Tao's work on ordinary lines in the plane, combined with classical results on space quartic curves and non-generic projections of curves.
This gives an alternative approach to Ball's recent results on ordinary planes, as well as extending them.
We also give bounds on the number of coplanar quadruples determined by a finite set of points on a rational space quartic curve in complex $3$-space, answering a question of Raz, Sharir and De Zeeuw [Israel J. Math.\ \textbf{227} (2018)].
\end{abstract}

\section{Introduction}\label{sec:intro}
Let $P$ denote a finite set of points in $3$-dimensional Euclidean (or real projective) space.
An \emph{ordinary line} of $P$ is a line passing through exactly two points of $P$.
The classical Sylvester-Gallai theorem states that any finite non-collinear point set in the plane has an ordinary line.
Green and Tao~\cite{GT13} proved a structure theorem for sets with few ordinary lines, and used it to prove the so-called Dirac-Motzkin conjecture: 
any sufficiently large non-collinear $n$-point set in the plane has at least $n/2$ ordinary lines.

It is natural to ask the same question of ordinary planes, where an \emph{ordinary plane} of $P$ is a plane passing through exactly three points of $P$. 
However, as pointed out by Motzkin~\cite{M51}, there are finite sets of points spanning $3$-space that do not have any ordinary planes.
His one example consists of the ten intersection points of triples of five planes in general position, and another consists of points chosen from two skew lines.
Motzkin~\cite{M51} suggested we consider instead planes $\Pi$ with all but one of the points of $P \cap \Pi$ lying on a line.
He then recovers an analogue of the Sylvester-Gallai theorem:
any finite non-coplanar set in $3$-space spans such a plane.

Purdy and Smith \cite{PS10} considered instead non-coplanar point sets in $3$-space that are in general position in the sense that no three points are collinear, proving a quadratic lower bound on the number of ordinary planes of the set.
Recently, Ball~\cite{B17} proved a structure theorem for sets with few ordinary planes by examining the so-called tetra-grid structure of the projective dual of such sets,
which is a $3$-dimensional analogue of the triangular grid used by Green and Tao~\cite{GT13}.
For a $4$-dimensional generalisation of Ball's ideas, see the very recent paper by Ball and Jimenez \cite{BJ18}.

The first aim of this paper is to give an alternative proof to a slightly more refined version of Ball's structure theorem in $3$-space (but with a stronger condition).
We do this by using results on non-generic projections of arbitrary space curves (see Section~\ref{sec:tools}), and by detailed considerations of space quartic curves, including notions from classical invariant theory (see Section~\ref{sec:quartics}).
We avoid the use of Ball's tetra-grids.
Our main result is Theorem~\ref{thm:strong} below.
A \emph{prism} is any set projectively equivalent to the vertex set of a prism over a regular polygon, which is a subset of $\RR^3$ of $n=2m$ points of the form
\[ \setbuilder{\left( \cos \left( \frac{2k\pi}{m} \right), \sin \left( \frac{2k\pi}{m} \right), \pm 1 \right)}{k=0, \dotsc, m-1}. \]
An \emph{antiprism} is any set projectively equivalent to the vertex set of an antiprism over a regular polygon, that is, a subset of $\RR^3$ of $n=2m$ points of the form
\begin{align*}
&\quad \setbuilder{\left( \cos \left( \frac{2k\pi}{m} \right), \sin \left( \frac{2k\pi}{m} \right), 1 \right)}{k=0, \dotsc, m-1}\\
&\cup \setbuilder{\left( \cos \left( \frac{2(k+1)\pi}{m} \right), \sin \left( \frac{2(k+1)\pi}{m} \right), -1 \right)}{k=0, \dotsc, m-1}.
\end{align*}
See also~\cite{B17}*{Section~2}.
In this theorem we refer to certain space quartic curves that have a natural group structure as described in Section~\ref{sec:quartics}. 
We prove this theorem in Section~\ref{sec:proof}.

\begin{theorem}[Full structure theorem]\label{thm:strong}
Let $K>0$ and suppose $n \ge C\max\{K^8,1\}$ for some sufficiently large constant $C>0$.
Let $P$ be a set of $n$ points in real projective $3$-space with no three collinear. If $P$ spans at most $Kn^2$ ordinary planes,
then up to a projective transformation, $P$ differs in at most $O(K)$ points from a configuration of one of the following types:
\begin{enumerate}[label=\rom]
\item A subset of a plane;
\item A prism or an antiprism;
\item A coset of a subgroup of an elliptic or acnodal space quartic curve.
\end{enumerate}
\end{theorem}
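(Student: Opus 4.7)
The plan is to reduce the problem to Green and Tao's structure theorem for ordinary lines in the plane via projection from points of $P$. For each $p\in P$, let $\pi_p$ denote projection from $p$ onto a generic plane. Since no three points of $P$ are collinear, $\pi_p$ is injective on $P\setminus\{p\}$, and the line through $\pi_p(a)$ and $\pi_p(b)$ is ordinary in $\pi_p(P\setminus\{p\})$ exactly when the plane through $p,a,b$ is an ordinary plane of $P$. Summing over $p$, the hypothesis that $P$ spans at most $Kn^2$ ordinary planes gives an average of $O(Kn)$ ordinary lines per projection, so for a positive fraction of base points $p$ the image $\pi_p(P\setminus\{p\})$ has $O(Kn)$ ordinary lines and Green and Tao's theorem applies.

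Each such image is, up to $O(K)$ exceptional points, either collinear, a coset of a finite subgroup of a smooth or acnodal plane cubic, or contained in the union of a conic and a line in a specific coset pattern. Lifting back through the apex $p$, the set $P\setminus\{p\}$ lies within $O(K)$ exceptions on a plane, on a cubic cone through $p$, or on the union of a plane and a quadric cone sharing apex $p$, respectively. If the planar case occurs for any suitable $p$ we recover case \itm{i}. To handle the remaining two cases one selects a second base point $q$ whose projection is in the same Green--Tao class and intersects the two lifted surfaces, using the non-generic projection results recalled in Section~\ref{sec:tools} to control the intersection.

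In the quadric-plus-plane case, the intersection of two such lifted surfaces with distinct apexes $p,q$ forces $P$ (modulo $O(K)$ points) onto a smooth quadric, and the additional combinatorial product structure inherited from the planar conic--line configurations pins down $P$ as a prism or antiprism on that quadric, giving case \itm{ii}. In the cubic-cone case, the intersection of two distinct cubic cones contains a common space curve of degree at most four on which all but $O(K)$ points of $P$ lie; the classification of irreducible space quartics developed in Section~\ref{sec:quartics}, together with the requirement that a generic plane section be a coset of a finite subgroup of a smooth or nodal plane cubic, excludes cuspidal and reducible options and identifies this curve as an elliptic or acnodal space quartic $\Gamma$. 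The natural group law on $\Gamma^{\mathrm{sm}}$ from Section~\ref{sec:quartics} characterises coplanar quadruples by $a+b+c+d=0$, so projecting from $p\in \Gamma$ matches the quartic group law to the planar cubic group law, and the Green--Tao coset structure lifts to show that $P$ is, up to $O(K)$ exceptions, a coset of a finite subgroup of $\Gamma^{\mathrm{sm}}$, yielding case \itm{iii}.

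The main obstacle is this last step: extracting the correct irreducible elliptic or acnodal space quartic from the intersection of two a priori unrelated cubic cones (which has degree nine), ruling out cuspidal and degenerate intersection components, and transferring the planar cubic group law to the spatial quartic group law. This is precisely where the combination of the non-generic projection theory of Section~\ref{sec:tools} and the invariant-theoretic analysis of space quartics in Section~\ref{sec:quartics} is expected to carry the bulk of the argument. A secondary technical issue is propagating the $O(K)$ error across averaging and multiple projections to produce a single global exceptional set of size $O(K)$, which in particular forces the quantitative threshold $n\ge CK^8$.
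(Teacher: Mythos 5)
Your outline follows the same route as the paper: project from the many points of $P$ that lie on few ordinary planes, apply Green--Tao in the image, lift the planar curves to cones, intersect cones with distinct apexes using the non-generic projection lemmas, and finish with the space-quartic theory. However, two steps that you defer to ``the tools of Sections~\ref{sec:tools} and~\ref{sec:quartics}'' are genuine gaps, because those tools do not by themselves deliver what you need. First, in the cubic-cone case, extracting a space quartic $\delta$ from the degree-nine intersection of two cubic cones does not identify $\delta$ as elliptic or acnodal: a rational space quartic can be of the \emph{second species}, lying on a unique quadric and carrying no group law that governs coplanarity, and nothing in the classification excludes this. The paper needs a dedicated argument (Lemma~\ref{lem:quartics}): using the fact that \emph{many} projections $\pi_\alpha(P)$ are near-cosets of acnodal cubics, one derives the polynomial identity $F(A,C,\alpha,\beta)=F(B,B,\alpha,\beta)=0$ for all $\alpha$, which forces the catalecticant of the fundamental quartic to vanish, and only then does Lemma~\ref{lem:first} place $\delta$ in the first species so that the group law of Section~\ref{ssec:groups} applies. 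Your proposal contains no mechanism for this exclusion. Second, in the conic-plus-line case, the two conics you obtain lie on distinct planes but do not automatically lie on a common (let alone smooth) quadric; the paper proves this separately (Lemma~\ref{lem:conics}) by normalising one conic to a circle and using the regular $m$-gon structure from Green--Tao's Lemma~7.4 to force the second conic to be a circle on the same cylinder. Asserting that the intersection of two lifted surfaces ``forces $P$ onto a smooth quadric'' skips this.

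There is also a quantitative mismatch. The Green--Tao statement you invoke --- image equal, up to $O(K)$ points, to a line, a coset of a cubic, or a conic-plus-line coset pattern --- is their \emph{full} structure theorem, which requires $n\ge\exp\exp(CK^C)$. To obtain the threshold $n\ge C\max\{K^8,1\}$ claimed in the theorem, the paper first applies the \emph{intermediate} structure theorem (Theorem~\ref{thm:GT}, with polynomial error terms $O(K^5)$--$O(K^8)$), pins down the spatial curve, and only then invokes the refined Lemmas~\ref{lem:7.4} and~\ref{lem:7.2} together with counting arguments (via Lemmas~\ref{cor:7.6} and~\ref{lem:7.7}) to cut the error down to $O(K)$. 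This ordering is not an optimisation detail but the reason the stated hypothesis suffices, and your averaging remark at the end does not substitute for it.
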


Note that it is simple to show that conversely, any of the three types of sets described in Theorem~\ref{thm:strong} span $O(Kn^2)$ ordinary planes.

Theorem~\ref{thm:strong} forms the basis for proving its higher-dimensional analogue, which we do in a subsequent paper \cite{LS}.

By stereographic projection we immediately obtain the following strengthening of~\cite{LMMSSZ17}*{Theorem~1.5}.
An \emph{ordinary circle} of a finite set of points in the plane is a circle passing through exactly three points of the set.
For definitions of the group on an ellipse, of a circular cubic curve, and of a double polygon, see \cite{LMMSSZ17}.
In \cite{LMMSSZ17}*{Theorem~1.5}, we need $n \ge \exp\exp(CK^C)$; here we only assume $n \ge CK^8$.

\begin{corollary}
Let $K>0$ and suppose $n \ge C\max\{K^8,1\}$ for some sufficiently large constant $C>0$.
Let $P$ be a set of $n$ points in the Euclidean plane.
If $P$ spans at most $Kn^2$ ordinary circles,
then up to inversions and similarities of the plane, $P$ differs in at most $O(K)$ points from a configuration of one of the following types:
\begin{enumerate}[label=\rom]
\item A subset of a line;
\item A coset of a subgroup of an ellipse;
\item A coset of a subgroup of a smooth circular cubic curve;
\item A double polygon that is `aligned' or `offset'.
\end{enumerate}
\end{corollary}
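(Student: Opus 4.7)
The plan is to lift the planar problem to three dimensions via stereographic projection and then invoke Theorem~\ref{thm:strong}. Fix a sphere $S^2 \subset \RR^3$, a point $N \in S^2$, and let $\sigma \colon S^2 \setminus \{N\} \to \RR^2$ be the corresponding stereographic projection, which in projective terms is simply projection from the point $N$. Set $P'' := \sigma^{-1}(P) \cup \{N\}$, a set of $n+1$ points on $S^2$; since any line in $\RR^3$ meets $S^2$ in at most two points, no three points of $P''$ are collinear. By the standard dictionary of inversive geometry, $\sigma$ sends circles on $S^2$ avoiding $N$ to circles in $\RR^2$ and circles on $S^2$ through $N$ to lines, so each ordinary plane of $P''$ corresponds either to an ordinary circle of $P$ (at most $Kn^2$ by hypothesis) or to an ordinary line of $P$ (at most $\binom{n}{2} \le n^2/2$ trivially). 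Thus $P''$ spans fewer than $(K+1)(n+1)^2$ ordinary planes, and the assumption $n \ge C\max\{K^8,1\}$ lets us apply Theorem~\ref{thm:strong} to $P''$ with parameter $O(K)$ after enlarging $C$.

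Theorem~\ref{thm:strong} yields, up to a projective transformation of $\RR^3$ and $O(K)$ exceptional points, one of three structures for $P''$. Since $P''$ has $n - O(K) \gg 8$ points on $S^2$, the structure is forced to be compatible with $S^2$: a subset of a plane becomes a subset of a circle on $S^2$; a prism or antiprism must be inscribed in $S^2$; and a space quartic $\gamma$ must equal $S^2 \cap Q$ for some quadric $Q$, because a space quartic and a quadric share at most $8$ points unless one contains the other. I then translate through $\sigma$ as follows. A circle on $S^2$ projects to a circle or, if it passes through $N$, to a line; an inversion centred at any point of this circle not in $P$ then sends it to a line, giving case (i) of the corollary. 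A prism or antiprism inscribed in $S^2$ projects to an aligned or offset double polygon respectively by the correspondence used in~\cite{LMMSSZ17}, giving case (iv). For the quartic case, if $N \in \gamma$, then $\sigma$ projects $\gamma$ from $N$ to a circular cubic in $\RR^2$, smooth or acnodal as $\gamma$ is: the degree drops from $4$ to $3$ because $N$ is a simple point of $\gamma$, and the image is circular as a classical consequence of $\gamma \subset S^2$. If $N \notin \gamma$, we replace $N$ by some $N' \in \gamma \setminus \sigma^{-1}(P)$ (which exists since $\gamma$ is infinite); changing the projection centre corresponds to a M\"obius transformation of the plane, precisely the freedom the corollary permits. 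Smooth cubics give case (iii), while the acnodal case and degenerate sub-cases where $Q$ reduces collapse to an ellipse, giving case (ii).

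The main obstacle is the third case, which rests on classical projective geometry: verifying the degree drop and circularity of the projected cubic, matching the group structure on $\gamma$ from Section~\ref{sec:quartics} with the natural group on the projected smooth circular cubic (or, in the acnodal sub-case, on the resulting ellipse), and using the M\"obius freedom on the plane to relocate the projection centre onto $\gamma$ when necessary. A subsidiary concern is that the projective transformation provided by Theorem~\ref{thm:strong} need not preserve $S^2$; one must select the projective representative of each standard configuration so that it actually sits on $S^2$ (a circle, a prism inscribed in $S^2$, or an intersection $S^2 \cap Q$), which is possible since the relevant smooth quadrics all have signature $(3,1)$ and are therefore projectively equivalent over $\RR$.
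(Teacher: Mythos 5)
Your proposal is correct and follows exactly the route the paper takes, which is stated there in a single sentence: lift $P$ to the sphere by inverse stereographic projection, apply Theorem~\ref{thm:strong} to the lifted set, and translate the three structures back through the projection using the dictionary of \cite{LMMSSZ17}. The one point worth making explicit is the acnodal sub-case: projecting an acnodal space quartic from a smooth point yields an \emph{acnodal} circular cubic, not an ellipse, and one reaches case (ii) by a further inversion centred at the acnode (equivalently, by relocating the projection centre to the acnode, which is a real point of the sphere) --- precisely the M\"obius freedom you already invoke when moving $N$ onto the quartic.
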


Theorem~\ref{thm:strong} also gives us the means to determine the maximum number of \emph{$4$-point planes}, which are planes passing through exactly four points of a given point set. 
The following corollary is an analogue of the orchard problem for circles \cite{LMMSSZ17}*{Theorem~1.3}.
It also immediately implies \cite{LMMSSZ17}*{Theorem~1.3} via stereographic projection.
We omit the proof, which goes along the same lines as the proof of \cite{LMMSSZ17}*{Theorem~1.3}.
(Note that the exact number in the case $n\equiv 0\pmod{8}$ was calculated incorrectly, but corrected in the arXiv version of \cite{LMMSSZ17}.)

\begin{corollary}
\mbox{}
\begin{enumerate}[label=\rom]
\item If $n$ is sufficiently large, the maximum number of $4$-point planes determined by a set of $n$ points in real projective $3$-space with no three collinear is equal to
\begin{equation*}
\begin{cases}
\frac{1}{24}n^3 - \frac14 n^2 + \frac{5}{6}n   & \text{if } n \equiv 0 \pmod{8},\\
\frac{1}{24}n^3 - \frac14 n^2 + \frac{11}{24}n - \frac{1}{4}  & \text{if } n \equiv 1, 3, 5, 7 \pmod{8},\\
\frac{1}{24}n^3 - \frac14 n^2 + \frac{7}{12}n - \frac12	& \text{if } n \equiv 2, 6 \pmod{8},\\
\frac{1}{24}n^3 - \frac14 n^2 + \frac56n - 1 & \text{if } n \equiv 4 \pmod{8}.
\end{cases}
\end{equation*}
\item Let $C$ be a sufficiently large constant.
If a set $P$ of $n$ points in real projective $3$-space with no three collinear determines more than $n^3/24 - 7n^2/24 + Cn$ $4$-point planes, then up to a projective transformation, $P$ lies on an elliptic or acnodal space quartic curve.
\end{enumerate}
\end{corollary}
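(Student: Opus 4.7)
The approach is double-counting combined with Theorem~\ref{thm:strong}, exactly along the lines of the proof of \cite{LMMSSZ17}*{Theorem~1.3}. Let $t_k$ denote the number of planes containing exactly $k$ points of $P$. Since no three points of $P$ are collinear, every triple of points lies in a unique plane, so
\[ \sum_{k \ge 3}\binom{k}{3} t_k = \binom{n}{3}, \]
which rearranges to $4t_4 \le \binom{n}{3} - t_3 - \sum_{k \ge 5}\binom{k}{3} t_k$. A strong upper bound on $t_4$ therefore follows from a lower bound on $t_3$ and on the contribution of higher-multiplicity planes, and extremal sets are forced to make all of these as small as possible.

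Next I would apply Theorem~\ref{thm:strong} with a suitable constant $K = O(1)$: if $t_4$ exceeds $n^3/24 - 7n^2/24 + Cn$, then $t_3 = O(n^2)$, so after a projective transformation $P$ differs in $O(1)$ points from one of the three listed types. The planar-subset case is excluded immediately, since it contributes only $O(n^2)$ four-point planes. For prisms and antiprisms a direct count on the two parallel polygonal slices shows again $t_4 = O(n^2)$. This leaves the case where $P$ is $O(1)$-close to a coset $p_0 + H$ of a subgroup $H$ of the group on an elliptic or acnodal space quartic curve $\gamma$.

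On such $\gamma$, four distinct points are coplanar precisely when they sum to zero under the group law, so the problem reduces to counting unordered $4$-element subsets of $p_0 + H$ with group-sum zero. Since the real locus of an elliptic or acnodal space quartic has group structure $\SS^1$ or $\SS^1 \times \ZZ/2\ZZ$, every finite subgroup $H$ of order $n$ is either cyclic or isomorphic to $\ZZ/(n/2)\ZZ \times \ZZ/2\ZZ$. A routine character-sum (equivalently, generating-function) calculation in these groups, with a case split on $n \bmod 8$ dictated by the $2$- and $4$-torsion of $H$ together with the image of $p_0$ under doubling, yields the four formulae in part~(i); optimising over the choice of $p_0$ and $H$ gives the stated maximum.

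For part~(ii), the structure theorem forces a near-extremal $P$ to be $O(1)$-close to such a coset, and a perturbation argument---adding or removing each of the $O(1)$ exceptional points and tracking the change in $t_4$ through the coplanarity relation on $\gamma$---upgrades this to $P \subset \gamma$ up to a projective transformation. I expect the main obstacles to be the exact evaluation of the extremal counts in part~(i), where the arithmetic of the $2$- and $4$-torsion produces the four different residues mod~$8$, and the delicate bookkeeping in the exchange argument for part~(ii); once Theorem~\ref{thm:strong} is in hand the rest is essentially mechanical, which is why the authors feel justified in omitting the details.
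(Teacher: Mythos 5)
Your proposal is correct and follows essentially the same route as the paper, which omits the proof and states only that it ``goes along the same lines as the proof of \cite{LMMSSZ17}*{Theorem~1.3}'': namely, double counting triples via $\sum_k\binom{k}{3}t_k=\binom{n}{3}$, invoking Theorem~\ref{thm:strong} to reduce to a coset of a subgroup of an elliptic or acnodal space quartic, and counting zero-sum quadruples in $\RR/\ZZ$ or $\RR/\ZZ\times\ZZ_2$ with the residue-mod-$8$ case analysis coming from the $2$- and $4$-torsion.
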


The second aim of this paper is to consider the number of \emph{coplanar quadruples} (four distinct coplanar points) of an $n$-point set on quartic curves in complex $3$-space.
Raz, Sharir, and De Zeeuw~\cite{RSZ16} showed that such a set spans $O(n^{8/3})$ coplanar quadruples unless the curve contains a planar or a quartic component. 
They left it as an open problem whether there always exist configurations on rational space quartic curves spanning $\Theta(n^3)$ coplanar quadruples.
As mentioned in \cite{RSZ16}, certain space quartic curves (elliptic normal curves) contain sets of $n$ points with $\Theta(n^3)$ coplanar quadruples.
The properties of space quartic curves that we need to prove Theorem~\ref{thm:strong} also enable us to solve this problem.
We prove the following theorem in Section~\ref{sec:quad}.
Any space quartic curve is contained in a quadric surface (see Section~\ref{sec:quartics}).
If the quadric is unique, the curve is called a \emph{space quartic of the second species}, otherwise it is a \emph{space quartic of the first species}.
Elliptic normal curves are always of the first species, while space quartics of the second species are always rational (see Section~\ref{sec:quartics}).

\begin{theorem}\label{thm:quad}
Let $\delta$ be a rational space quartic curve in $\CC\PP^3$.
If $\delta$ is of the first species, then there exist $n$ points on $\delta$ that span $\Theta(n^3)$ coplanar quadruples.
If $\delta$ is of the second species, then any $n$ points on $\delta$ span $O(n^{8/3})$ coplanar quadruples.
\end{theorem}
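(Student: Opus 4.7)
\medskip

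\noindent\textbf{Proof proposal.}
The plan is to treat the two species by complementary approaches: an explicit construction for the first species, and an incidence-theoretic upper bound for the second.

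For the first species case, I would invoke the group law on a rational first species quartic developed in Section~\ref{sec:quartics}. Such a curve is singular (arithmetic genus~$1$, geometric genus~$0$, and hence carries one node or cusp), and its smooth locus admits a one-dimensional algebraic group structure---modelled on $\GG_m$ in the nodal/acnodal case and $\GG_a$ in the cuspidal case---with respect to which four smooth points are coplanar iff their parameters sum to the identity. Taking $P$ to be a group of roots of unity of order $n$ in the $\GG_m$ case, or an arithmetic progression of length $n$ in the $\GG_a$ case, for a positive proportion of triples $(t_1,t_2,t_3)\in P^3$ the required fourth coordinate $t_4=-(t_1+t_2+t_3)$ again lies in $P$, giving $\Theta(n^3)$ coplanar quadruples after accounting for ordering and discarding the $O(n^2)$ tuples with repeated entries.

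For the second species case, I would parameterise $\delta$ by a degree-$4$ morphism $\gamma\colon\CC\PP^1\to\delta$ (which exists since second species quartics are rational) and encode the coplanarity of $\gamma(t_1),\dotsc,\gamma(t_4)$ as the vanishing of the $4\times 4$ determinant $D(t_1,\dotsc,t_4)$ whose rows are the homogeneous coordinates of the $\gamma(t_i)$. Since $D$ is alternating and of degree at most $4$ in each $t_i$, dividing by the Vandermonde $\prod_{i<j}(t_i-t_j)$ yields a symmetric polynomial $F$ of degree at most $1$ in each variable. The bound $O(n^{8/3})$ would then follow from an Elekes--Szab\'o-type theorem for four-variable polynomial relations, in the spirit of the incidence argument in~\cite{RSZ16}, provided one verifies that $F=0$ does not admit a ``group law'' form---that is, no independent M\"obius change of the four variables reduces it to either $t_1+t_2+t_3+t_4=c$ or $t_1t_2t_3t_4=c$.

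The main obstacle is establishing this non-degeneracy of $F$ for second species quartics. The distinguishing algebraic feature of the second species---that $\delta$ lies on a \emph{unique} quadric---must be translated into an obstruction to the coplanarity relation factoring through a one-dimensional algebraic group. I expect the argument to proceed by classifying the possible shapes of the unique containing quadric (smooth ruled quadric versus singular cone), putting $\gamma$ into a normal form in each case, and then verifying by direct computation that the resulting $F$ does not factor through $\GG_m$ or $\GG_a$. Intuitively, the generalised Jacobian that carries the group law in the first species case is absent in the second, and this is exactly what prevents the $\Theta(n^3)$ construction from running.
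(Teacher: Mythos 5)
Your first-species half is correct and is essentially the paper's argument: the smooth locus of a singular first-species quartic carries a group isomorphic to $(\CC^*,\cdot)$ or $(\CC,+)$ under which coplanarity is ``sum equals identity'', and roots of unity (resp.\ an interval of integers) give $\Theta(n^3)$ coplanar quadruples. Your second-species half also starts the same way as the paper: parametrise $\delta$, observe that the coplanarity determinant divided by the Vandermonde is multilinear and symmetric (this is exactly Lemma~\ref{lem:coplanar}, giving the polynomial $F$ of \eqref{eqn:coplanar}), and feed $F$ into the four-variable Elekes--Szab\'o theorem of \cite{RSZ16} (Theorem~\ref{thm:ES}).

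The gap is in the non-degeneracy verification, which you correctly identify as the main obstacle but then both mis-state and leave unproved. The degenerate case of Theorem~\ref{thm:ES} is not ``$F=0$ reduces to $t_1+t_2+t_3+t_4=c$ or $t_1t_2t_3t_4=c$ after independent M\"obius changes''; it is the much weaker condition that $F=0$ is locally $\phi_1(t_1)+\phi_2(t_2)+\phi_3(t_3)+\phi_4(t_4)=0$ for arbitrary injective \emph{analytic} $\phi_i$. Ruling out M\"obius reductions does not rule out this analytic form, so even if your proposed normal-form computation were carried out it would not yield the $O(n^{8/3})$ bound. The standard (and the paper's) way to exclude the analytic form is a derivative test: solve $F=0$ for $t_4=t_4(t_1,t_2,t_3)$ and note that the special form forces $(\partial t_4/\partial t_2)/(\partial t_4/\partial t_3)$ to be independent of $t_1$. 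The paper's key computation is that this partial-derivative identity factors as
$\cat(g)\cdot F(t_1,t_1,t_2,t_3)(t_3-t_2)/h(t_1,t_2)^2$, where $\cat(g)$ is the catalecticant of the fundamental quartic; hence degeneracy forces $\cat(g)=0$, which by Lemma~\ref{lem:first} is precisely the first-species condition. This is the link you are missing: the dichotomy first/second species is an algebraic invariant (the catalecticant) of the normal form \eqref{eqn:param}, and it is exactly the quantity whose non-vanishing certifies Elekes--Szab\'o non-degeneracy. Your alternative plan of classifying the unique containing quadric and checking normal forms case by case is not obviously workable without this identity, and in any case is not executed. (Your intuition that the first-species relation is genuinely of M\"obius--group-law type is confirmed a posteriori by Lemma~\ref{lem:group}, but that is the converse direction and does not substitute for the derivative test.)
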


\subsection*{A note on our methods}

Although all our results can be formulated for affine space, it is both more natural and more convenient to work in projective space, and this is what we will do.
Also, although we will mostly work with real varieties (mostly curves), we will always consider them to be the real points of a variety in complex projective space.

In classical algebraic geometry, many results are formulated for smooth varieties and for generic points.
For example, the projection of a curve of degree $d$ in  complex projective $3$-space from a generic point not on the curve onto a plane is again a curve of degree $d$.
Since we will project from given points not of our choosing, we cannot always assume that a projection point is generic, and will have to use more subtle results on projections (see Section~\ref{ssec:projections}).
Also, since we are working with an extremal problem, there is no guarantee that the curves on which we will find the points to lie are smooth; on the contrary, we should not be surprised that singularities occur in extremal objects.
Although it turns out that the curves that we consider will in the generic case be smooth quartics, quartic curves with singularities will also appear.
Thus we will have to use detailed classical results on rational space quartics.

\subsection*{Notation}

Let $\FF$ denote the field of real or complex numbers, and let $\FF\PP^m$ denote the $m$-dimensional projective space over $\FF$.
We only consider dimensions $m=2, 3$.
We denote the homogeneous coordinates of a point in $3$-dimensional projective space by a $4$-dimensional vector $[x_0, x_1, x_2, x_3]$.
We denote the algebraic surface where a homogeneous polynomial $f\in\FF[x_0, x_1, x_2, x_3]$ vanish by $Z_{\FF}(f)$.
More generally, we consider a \emph{variety} to be any intersection of algebraic surfaces.
We say that a variety is pure-dimensional if each of its irreducible components has the same dimension.
We consider a \emph{curve} of degree $d$ in $\CC\PP^3$ to be a variety $\delta$ of pure dimension $1$ such that a generic hyperplane of $\CC\PP^3$ intersects $\delta$ in $d$ distinct points.
We denote the Zariski closure of a set $S \subseteq \CC\PP^3$ by $\overline{S}$.

\section{Tools from classical algebraic geometry}\label{sec:tools}

\subsection{B\'ezout's theorem}\label{ssec:bezout}

We will use 
a form of B\'ezout's theorem \cite{H92}*{Theorem~18.4}.
Two pure-dimensional varieties $X$ and $Y$ in $\CC\PP^m$
are said to \emph{intersect properly} if $\dim(X\cap Y) = \dim(X) + \dim(Y) - m$.

\begin{theorem}[B\'ezout's theorem]\label{thm:Bezout}
Let $X,Y\subset\CC\PP^m$ 
be pure-dimensional varieties that intersect properly.
If the intersection $X\cap Y$ has irreducible components $\delta_1, \delta_2, \dotsc, \delta_k$, then their total degree $\sum_{i=1}^k\deg(\delta_i)$ is at most $\deg(X)\deg(Y)$.
\end{theorem}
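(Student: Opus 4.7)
The plan is to derive this as an immediate consequence of the more precise intersection-theoretic form of B\'ezout in $\CC\PP^m$. Recall that the Chow ring of projective space is $A^*(\CC\PP^m)=\ZZ[h]/(h^{m+1})$, where $h$ is the class of a hyperplane, and that a pure $d$-dimensional subvariety $V \subseteq \CC\PP^m$ of degree $e$ represents the class $e \cdot h^{m-d}$. Thus to prove the stated inequality I would first translate the data into classes: letting $a = \operatorname{codim} X$ and $b = \operatorname{codim} Y$, the proper-intersection hypothesis gives $\operatorname{codim}(X \cap Y) = a + b \le m$, so the intersection cycle lives in the correct codimension and its underlying set is exactly $\delta_1 \cup \dotsb \cup \delta_k$.

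The main step is then to invoke compatibility of Chow-ring multiplication with proper intersection: for proper intersections one has
\[
X \cdot Y = \sum_{i=1}^{k} m_i\,[\delta_i]
\]
for certain positive integers $m_i \ge 1$ (the local intersection multiplicities along each $\delta_i$), and the class of this cycle equals the product of classes
\[
\bigl(\deg(X)\, h^{a}\bigr)\bigl(\deg(Y)\, h^{b}\bigr) = \deg(X)\deg(Y)\, h^{a+b}.
\]
Taking degrees of both sides yields the sharper equality $\sum_{i=1}^{k} m_i \deg(\delta_i) = \deg(X)\deg(Y)$, and discarding the multiplicities (each $m_i \ge 1$) gives the stated bound $\sum_{i=1}^k \deg(\delta_i) \le \deg(X)\deg(Y)$.

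The main obstacle is not the combinatorial step but the construction and positivity of the multiplicities $m_i$, together with the verification that intersection in the Chow ring respects the degree homomorphism when the intersection is proper. This is precisely the content of \cite{H92}*{Theorem~18.4} (and the preceding discussion of intersection multiplicities in projective space), so in the body of the paper it is cleanest to cite that result rather than reprove it. An alternative route, should one wish to avoid intersection theory altogether, is to induct on $\operatorname{codim}(Y)$: slice $Y$ by a generic hypersurface of degree $\deg Y$ meeting each relevant component properly, reducing to the codimension-one case, which can be handled by comparing Hilbert polynomials; this essentially recovers the same multiplicity bookkeeping in more elementary language.
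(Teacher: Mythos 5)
Your proposal is correct and matches the paper's treatment: the paper simply cites \cite{H92}*{Theorem~18.4} for the multiplicity version and remarks that discarding the multiplicities (each at least $1$) gives the stated inequality, which is exactly the reduction you describe. The Chow-ring bookkeeping you sketch is the standard content of that cited theorem, so no further argument is needed in the paper itself.
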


By counting the multiplicity of each irreducible component $\delta_i$, it is possible to formulate a version of this theorem where equality holds \cite{H92}*{Theorem~18.4}, but we do not need this strengthening.
We will instead use the following corollaries of Theorem~\ref{thm:Bezout}.

\begin{corollary}\label{cor:Bezout1}
Let $X$ and $Y$ be two distinct surfaces in $\CC\PP^3$.
Then the irreducible components $\delta_1, \delta_2, \dotsc, \delta_k$ of $X\cap Y$ are all curves of total degree $\sum_{i=1}^k\deg(\delta_i)$ at most $\deg(X)\deg(Y)$.
\end{corollary}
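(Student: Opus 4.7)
The plan is a direct reduction to Theorem~\ref{thm:Bezout}. Since $X$ and $Y$ are surfaces in $\CC\PP^3$, they are pure $2$-dimensional varieties, so $\dim(X) + \dim(Y) - 3 = 1$. To invoke B\'ezout, I need to verify that $X$ and $Y$ intersect properly, that is, that every irreducible component of $X \cap Y$ has dimension exactly $1$. Once this is established, Theorem~\ref{thm:Bezout} immediately gives the degree bound $\sum_i \deg(\delta_i) \le \deg(X)\deg(Y)$.

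For the upper bound on dimension, I would interpret the hypothesis that $X$ and $Y$ are \emph{distinct} surfaces as asserting that they share no common irreducible component (this is the only sensible reading, since otherwise $X \cap Y$ would contain a $2$-dimensional component and the conclusion would fail). Decompose $X$ and $Y$ into irreducible components; for any irreducible component $W \subseteq X$ and $Z \subseteq Y$ we have $W \ne Z$, so $W \cap Z$ is a proper closed subvariety of the irreducible surface $W$ and hence has dimension at most $1$. Taking the union over all pairs of components gives $\dim(X \cap Y) \le 1$.

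For the matching lower bound, I would appeal to the projective dimension theorem (as in \cite{H92}*{Theorem~17.24}): any two subvarieties of $\CC\PP^m$ of dimensions $r$ and $s$ with $r + s \ge m$ intersect in a variety each of whose irreducible components has dimension at least $r+s-m$. Applied componentwise, this shows that every irreducible component of $X \cap Y$ has dimension at least $1$. Combining the two bounds, $X \cap Y$ is pure $1$-dimensional, so its irreducible components $\delta_1,\dotsc,\delta_k$ are indeed curves and $X$ and $Y$ intersect properly.

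There is essentially no obstacle here beyond bookkeeping: the real content is in Theorem~\ref{thm:Bezout}, and the corollary only requires us to confirm the proper-intersection hypothesis. The one subtlety is making precise what ``distinct'' means for reducible surfaces, and this is forced on us by the statement.
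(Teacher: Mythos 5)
Your proposal is correct and follows essentially the same route as the paper: bound $\dim(X\cap Y)$ above by $1$ using distinctness, below by $1$ using the projective dimension theorem, conclude that the intersection is proper, and apply Theorem~\ref{thm:Bezout}. The only difference is that you spell out the upper bound via irreducible components and explicitly flag the reading of ``distinct'' as ``no common component,'' which the paper leaves implicit in the sentence ``their intersection will have lower dimension.''
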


\begin{proof}
Since the surfaces are distinct, their intersection will have lower dimension.
It then follows by the projective dimension theorem \cite{H77}*{Theorem~7.2} that $\dim(X\cap Y)=1$, so $X$ and $Y$ intersect properly.
The corollary now follows from Theorem~\ref{thm:Bezout}.
\end{proof}

The following corollary is an immediate consequence of the previous one.
\begin{corollary}\label{cor:Bezout2}
Let $\delta$ be a curve of degree $d_1$
and $X$ a surface of degree $d_2$ 
in $\CC\PP^3$ not containing any irreducible component of $\delta$.
Then $\delta \cap X$ has at most $d_1d_2$ points.
\end{corollary}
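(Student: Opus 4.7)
My plan is to deduce the bound by applying B\'ezout's theorem (Theorem~\ref{thm:Bezout}) directly to $\delta$ and $X$, so essentially the only thing to check is that these two varieties intersect properly; after that, the corollary follows because the irreducible components of a $0$-dimensional variety are single points of degree~$1$.

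First I would verify the dimension condition. We have $\dim\delta=1$ and $\dim X=2$, so proper intersection requires $\dim(\delta\cap X)=0$. Let $\gamma$ be any irreducible component of $\delta$. The hypothesis that $X$ does not contain $\gamma$ means $\gamma\cap X$ is a proper Zariski-closed subvariety of the irreducible $1$-dimensional variety $\gamma$, and therefore has dimension at most $0$. Taking the union over all components of $\delta$ yields $\dim(\delta\cap X)\le 0$, so $\delta$ and $X$ intersect properly (the empty intersection is trivial).

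With proper intersection established, Theorem~\ref{thm:Bezout} produces a decomposition $\delta\cap X=\eta_1\cup\dotsb\cup\eta_k$ into irreducible components satisfying $\sum_{i=1}^{k}\deg(\eta_i)\le\deg(\delta)\deg(X)=d_1 d_2$. Each $\eta_i$ is a zero-dimensional irreducible variety, hence a single point of degree $1$, and therefore $|\delta\cap X|=k\le d_1 d_2$. There is essentially no obstacle beyond the routine dimension count; Corollary~\ref{cor:Bezout2} is little more than the specialisation of B\'ezout's theorem to a curve and a surface, parallel to the way Corollary~\ref{cor:Bezout1} specialises it to two surfaces.
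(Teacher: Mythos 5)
Your proof is correct. The paper offers no argument here beyond remarking that the statement is an immediate consequence of Corollary~\ref{cor:Bezout1}; your direct application of Theorem~\ref{thm:Bezout} --- checking proper intersection via the observation that $X$ meets each irreducible component of $\delta$ in a proper closed, hence zero-dimensional, subset, and then noting that the components of $\delta\cap X$ are single points of degree one --- is exactly the dimension-count argument used for Corollary~\ref{cor:Bezout1}, run one dimension down.
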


We present a proof of the following well-known result after we have discussed projections in Section~\ref{ssec:projections}.
\begin{corollary}\label{cor:Bezout3}
Two distinct curves $\delta_1$ and $\delta_2$ in $\CC\PP^3$ with no common irreducible component and of degrees $d_1$ and $d_2$, respectively, intersect in at most $d_1d_2$ points.
\end{corollary}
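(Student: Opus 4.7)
The plan is to reduce to the classical Bézout theorem for plane curves by projecting both $\delta_1$ and $\delta_2$ from a carefully chosen point $p\in\CC\PP^3$ onto a plane. Theorem~\ref{thm:Bezout} cannot be applied directly, because two curves in $\CC\PP^3$ do not intersect properly: the expected dimension $\dim\delta_1+\dim\delta_2-3=-1$ is negative. Before projecting, I would note that $\delta_1\cap\delta_2$ is a finite set: two distinct irreducible curves in $\CC\PP^3$ cannot share a positive-dimensional subvariety without agreeing, so by hypothesis each pair of irreducible components of $\delta_1$ and $\delta_2$ meets in dimension zero. Write $\delta_1\cap\delta_2=\{q_1,\dotsc,q_N\}$.

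Next I would choose a projection centre $p\in\CC\PP^3$ satisfying four conditions: (a)~$p\notin\delta_1\cup\delta_2$; (b)~$p$ lies on none of the finitely many lines through pairs $q_i,q_j$ with $i\neq j$, so that the projection $\pi_p\colon\CC\PP^3\setminus\{p\}\to\CC\PP^2$ is injective on $\delta_1\cap\delta_2$; (c)~$\pi_p(\delta_i)$ is a plane curve of degree exactly $d_i$ for $i=1,2$, which holds for $p$ outside a proper subvariety of $\CC\PP^3$ by the non-generic projection results of Section~\ref{ssec:projections}; and (d)~$\pi_p(\delta_1)$ and $\pi_p(\delta_2)$ share no common irreducible component. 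Each of (a)--(c) clearly excludes only a proper subvariety of $\CC\PP^3$. Given such a $p$, applying the classical plane Bézout theorem to $\pi_p(\delta_1)$ and $\pi_p(\delta_2)$ gives $|\pi_p(\delta_1)\cap\pi_p(\delta_2)|\le d_1d_2$, and combined with~(b) this yields $|\delta_1\cap\delta_2|\le d_1d_2$.

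The main obstacle is verifying~(d). I would handle it as follows: if $\pi_p(C_1)=\pi_p(C_2)$ for distinct irreducible components $C_1$ of $\delta_1$ and $C_2$ of $\delta_2$, then $C_1$ must lie on the cone from $p$ over $C_2$, which is a surface of $\CC\PP^3$. For each fixed pair $(C_1,C_2)$, the condition that $C_1$ lies in such a cone is a nontrivial algebraic constraint on $p$, so the set of bad projection centres is a proper subvariety of $\CC\PP^3$; since there are only finitely many pairs of components, the bad locus for~(d) is again a proper subvariety. A projection centre $p$ avoiding the union of all four excluded loci therefore exists, completing the reduction to plane Bézout.
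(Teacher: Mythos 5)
Your proposal is correct and follows essentially the same route as the paper's proof: project from a generic point of $\CC\PP^3\setminus(\delta_1\cup\delta_2)$ to reduce to B\'ezout's theorem for two properly intersecting plane curves of degrees $d_1$ and $d_2$. The paper states this in three lines with the word ``generic'' doing all the work; your conditions (a)--(d) simply make explicit which proper closed loci the projection centre must avoid (injectivity on the finite set $\delta_1\cap\delta_2$, preservation of degrees, no shared components downstairs), so there is nothing substantively different to compare.
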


\subsection{Projections of curves}\label{ssec:projections}

We consider the projection from a point $p \in \FF\PP^3$ to be the mapping $\pi_p$ from $\FF\PP^3 \setminus \{p\}$ onto a plane $\Pi$ of $\FF\PP^3$ not containing $p$, where a point $q$ is mapped to the intersection of the line $pq$ with~$\Pi$ \cite{H92}*{Example~3.4}.
The exact choice of $\Pi$ does not matter, although certain choices might be more convenient.

Let $P$ be a finite point set in $\RR\PP^3$ with no three points collinear.
If we project $P \setminus \{p\}$ from a point $p \in P$, all ordinary planes of $P$ through $p$ map to ordinary lines of $\pi_p(P \setminus \{p\})$ in $\RR\PP^2$.
Green and Tao showed that sets with few ordinary lines lie on (possibly reducible) planar cubic curves \cite{GT13}*{Theorem 1.5}.
It turns out that we need to understand projections of curves so that we can apply their result to the structure of the set $P$.

Let $\delta$ be an irreducible non-planar curve of degree $d$ and $p$ a point in $\CC\PP^3$.
We say that $\pi_p$ is \emph{generically one-to-one on $\delta$} if there is a finite subset $S$ of $\delta$ such that $\pi_p$ restricted to $\delta\setminus S$ is one-to-one.
(This is equivalent to the birationality of $\pi_p$ restricted to $\delta\setminus\{p\}$ \cite{H92}*{p.~77}.)
It is a well-known classical fact that, if $\pi_p$ generically one-to-one, the degree of the curve $\overline{\pi_p(\delta \setminus \{p\})}$ is $d-1$ if $p$ lies on $\delta$, and is $d$ if $p$ does not lie on~$\delta$~\cite{H92}*{Example 18.16}, \cite{Kollar}*{Section 1.15}.
We can now prove Corollary~\ref{cor:Bezout3}, which is essentially a consequence of B\'ezout's theorem (Theorem~\ref{thm:Bezout}).

\begin{proof}[Proof of Corollary~\ref{cor:Bezout3}]
Since $\delta_1$ and $\delta_2$ are distinct with no common irreducible component, their intersection is finite.
Projecting from a generic point $p \in \CC\PP^3\setminus{(\delta_1 \cup \delta_2)}$, we thus have two planar curves $\delta_1' := \pi_p(\delta_1)$ and $\delta_2' := \pi_p(\delta_2)$ of degrees $d_1$ and $d_2$, respectively, which intersect properly.
The corollary then follows from B\'ezout's theorem (Theorem~\ref{thm:Bezout}).
\end{proof}

In the projections that we will make, we will not have complete freedom in choosing a projection point, and therefore we cannot guarantee that $\pi_p$ is generically one-to-one on $\delta$.
For this reason, we will need more sophisticated results on the projection of curves from a point.
We start with the following more elementary proposition, which is a restatement of \cite{B17}*{Lemma~6.6}. 

\begin{prop}\label{prop:cones}
Let $\sigma_1$ and $\sigma_2$ be two irreducible conics given by the intersection of two distinct planes and a quadric surface in $\CC\PP^3$.
Then there are at most two quadric cones containing both $\sigma_1$ and $\sigma_2$.
\end{prop}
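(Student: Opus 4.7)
The plan is to identify all quadrics containing $\sigma_1\cup\sigma_2$ as a linear pencil, and then to count the rank-$3$ members by a discriminant argument.

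First I would show that the family of quadrics in $\CC\PP^3$ containing $\sigma_1\cup\sigma_2$ is a pencil spanned by the given quadric surface $Q$ and the reducible quadric $\Pi_1\Pi_2$ (the product of the defining linear forms of the two planes). Both clearly lie in this family, and they are linearly independent since $\Pi_1\Pi_2$ is reducible while $Q$ cannot be: were $Q$ a pair of planes, then $Q\cap\Pi_i$ would be a reducible conic, contradicting irreducibility of $\sigma_i$. For any further quadric $Q'$ containing $\sigma_1\cup\sigma_2$ and not a scalar multiple of $Q$, Corollary~\ref{cor:Bezout1} forces $Q'\cap Q=\sigma_1\cup\sigma_2$ set-theoretically. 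Restricting to $\Pi_i$ then shows that $Q'|_{\Pi_i}$ and $Q|_{\Pi_i}$ are proportional, so $Q'-\lambda_i Q=\Pi_i L_i$ for some linear form $L_i$ and scalar $\lambda_i$. The two constants $\lambda_1$ and $\lambda_2$ must coincide, otherwise the line $\Pi_1\cap\Pi_2$ would lie in $Q$ and hence in the irreducible conic $\sigma_1$; then $L_1$ must be a scalar multiple of $\Pi_2$, yielding $Q'\in\mathrm{span}(Q,\Pi_1\Pi_2)$.

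Having identified the pencil, I would next analyze its discriminant $\Delta(\lambda,\mu)=\det(\lambda M_Q+\mu M_\ast)$, a homogeneous polynomial of degree $4$ in $(\lambda,\mu)$ whose zeros correspond to the singular members of the pencil $\lambda Q+\mu\Pi_1\Pi_2$. The matrix $M_\ast$ of $\Pi_1\Pi_2$ has rank $2$ (and hence corank $2$), and a standard Schur-complement computation in coordinates adapted to $\ker M_\ast$ shows that $\Delta$ vanishes at the member $[\lambda:\mu]=[0:1]$ to order at least the corank, namely $2$. Since $\deg\Delta=4$, the remaining root multiplicity is at most $2$, and each rank-$3$ member (quadric cone) of the pencil contributes at least $1$ to this multiplicity; hence the pencil contains at most $2$ quadric cones.

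Any quadric cone containing both $\sigma_1$ and $\sigma_2$ contains $\sigma_1\cup\sigma_2$ and thus lies in our pencil, so the desired bound follows. I expect the main obstacle to be the clean verification of step one --- that no quadrics outside $\mathrm{span}(Q,\Pi_1\Pi_2)$ contain $\sigma_1\cup\sigma_2$ --- since one must carefully rule out the degenerate possibility that $Q'-\lambda_i Q$ could vanish on $\Pi_i$ with incompatible constants. The discriminant tally in step two is then essentially immediate from the classical theory of symmetric matrix pencils.
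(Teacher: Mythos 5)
The paper does not actually prove Proposition~\ref{prop:cones}: it is quoted as a restatement of Lemma~6.6 of Ball~\cite{B17}, so there is no in-paper argument to compare yours against, and a self-contained proof is welcome. Your route --- identify all quadrics through $\sigma_1\cup\sigma_2$ as the pencil spanned by $Q$ and $\Pi_1\Pi_2$, then count rank-$3$ members via the discriminant --- is sound. Step one is correct, and in fact you do not need Corollary~\ref{cor:Bezout1} there: since $\sigma_i$ is an irreducible conic, any quadratic form on $\Pi_i$ vanishing on it is automatically proportional to $Q|_{\Pi_i}$ (with the degenerate case $Q'|_{\Pi_i}\equiv 0$ leading directly to $Q'$ being a multiple of $\Pi_1\Pi_2$), and your argument that $\lambda_1=\lambda_2$ via the line $\Pi_1\cap\Pi_2$ is exactly right.

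The one point you must still address is in step two: after writing $\Delta(\lambda,\mu)=\mu^2g(\lambda,\mu)$ with $\deg g=2$, your count of cones presupposes $g\not\equiv 0$, i.e.\ that the pencil is not entirely singular, and you never verify this. It does hold, but only because of the hypotheses. First, $Q$ cannot have rank $\le 2$, since a pair of planes meets $\Pi_i$ in lines rather than an irreducible conic; so either $\det M_Q\ne 0$, in which case the $\lambda^4$-coefficient of $g$ is nonzero, or $Q$ is itself a rank-$3$ cone with vertex $v$. In the latter case $v\notin\Pi_1\cup\Pi_2$ (a plane through the vertex cuts the cone in a singular conic, not an irreducible one), and the $\lambda^3\mu$-coefficient of $\Delta$, namely $\operatorname{tr}(\operatorname{adj}(M_Q)M_*)=c\,v^TM_*v=c\,\Pi_1(v)\Pi_2(v)$ with $c\neq0$, is nonzero. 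With that two-line check inserted, the proof is complete. A small cosmetic remark: state explicitly that ``quadric cone'' means a rank-$3$ quadric, so that the rank-$2$ member $\Pi_1\cup\Pi_2$ of the pencil is excluded from the count; this is the sense in which the proposition is applied in the proof of Lemma~\ref{lem:intermediate}.
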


We define a \emph{trisecant} of an irreducible non-planar curve $\delta$ in $\CC\PP^3$ to be a line that intersects $\delta$ in at least three distinct points, or that can be approximated in the Zariski topology by such lines.
More precisely, note that lines in $\CC\PP^3$ can be parametrised using Pl\"ucker coordinates by points on the Klein quadric in $\CC\PP^5$ \cite{H92}*{Example~6.3}.
A \emph{trisecant} of $\delta$ is a line that corresponds to a point in the Zariski closure of the set of points on the Klein quadric that correspond to lines that intersect $\delta$ in at least three distinct points \cite{H92}*{Example~8.9}.
The classical trisecant lemma states that the number of points on an irreducible non-planar curve in $\CC\PP^3$ that lie on infinitely many trisecants is finite \cite{ACGH}*{pp.~109--111}, \cite{Severi}*{p.~85, footnote}.
We first state a generalisation (Lemma~\ref{lem:trisecant}) of the trisecant lemma to curves that are not necessarily irreducible, which is a simple consequence of \cite{KKT08}*{Theorem 2}. 
We then prove three quantitative versions of the trisecant lemma (Lemmas \ref{lem:projection2}, \ref{lem:projection3}, and \ref{lem:projection1}).

\begin{lemma}[Trisecant lemma]\label{lem:trisecant}
Let $\delta$ be a non-planar curve in $\CC\PP^3$.
Then the number of points on $\delta$ that lie on infinitely many trisecants of $\delta$ is finite.
\end{lemma}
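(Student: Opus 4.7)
The plan is to invoke \cite{KKT08}*{Theorem~2}, which is a version of the trisecant lemma applicable to possibly reducible curves, with some bookkeeping to reduce it to our formulation.

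I would begin by decomposing $\delta = \delta_1 \cup \cdots \cup \delta_k$ into irreducible components. The classical trisecant lemma, in its standard form for an irreducible non-planar curve, handles each non-planar component $\delta_i$ individually: outside a finite exceptional set $F_i \subset \delta_i$, only finitely many trisecants of $\delta_i$ pass through any single point. It remains to control trisecants of the full curve $\delta$ that arise not from $\delta_i$ alone but from lines meeting several components.

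For a point $p \in \delta_i \setminus F_i$, I would parametrise lines through $p$ by $\CC\PP^2$ and project each component $\delta_j$ from $p$ to a plane curve $\delta_j'$. A trisecant of $\delta$ through $p$ corresponds either to a singular point of some $\delta_j'$ (a line meeting $\delta_j$ in two further points) or to an intersection point of two distinct $\delta_j'$ (a line meeting two different components). By B\'ezout applied in $\CC\PP^2$, each of these contributions is finite unless two projected curves share a common component, or some $\delta_j'$ is non-reduced. Such degeneracies encode special planar configurations of $p$ with parts of $\delta$ (for example, $p$ lying in a plane spanned by two components), and since $\delta$ is not contained in a plane, each such degeneracy imposes a proper algebraic condition on $p \in \delta_i$, so occurs only at finitely many points.

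The main obstacle is the case of an irreducible component of $\delta$ that is itself planar of degree at least three: for such a component $\sigma$, every line in its plane through any of its points meets $\sigma$ in at least three points, so the classical trisecant lemma does not apply directly. This is exactly the generality addressed by \cite{KKT08}*{Theorem~2}, which packages the per-component analysis and the cross-component B\'ezout reasoning above into a single statement valid for reducible curves. Applying that theorem under the standing hypothesis that $\delta$ is non-planar, and collecting the resulting finite exceptional sets across all components, yields the lemma.
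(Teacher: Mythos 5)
The paper gives no proof of this lemma beyond the bare citation of \cite{KKT08}*{Theorem~2}, so at the top level your route is the same as the paper's. The issues lie in the supporting argument you build around that citation, and they are genuine.

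Your claim that, because $\delta$ is not contained in a plane, ``each such degeneracy imposes a proper algebraic condition on $p \in \delta_i$'' fails. Take $\delta$ to be two distinct irreducible conics lying in a common plane $\Pi$ together with a line not contained in $\Pi$. This is a non-planar curve of pure dimension one, and no component is planar of degree at least three; yet for every point $p$ of either conic, a generic line through $p$ inside $\Pi$ meets $\delta$ in at least three distinct points, so every point of both conics lies on infinitely many trisecants. Here your ``two projected curves share a common component'' degeneracy occurs along an entire component, not at finitely many points, and the B\'ezout count does not close. The same phenomenon sinks your last paragraph: if some component $\sigma$ of $\delta$ is planar of degree at least three, then, exactly as you observe, every point of $\sigma$ lies on infinitely many trisecants of $\delta$, so the conclusion of the lemma is simply false in that case and cannot be rescued by citing \cite{KKT08}; the generalized trisecant lemma there carries non-degeneracy hypotheses on the components precisely to exclude such configurations. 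A correct write-up must either verify that those hypotheses hold for the curves to which the lemma will be applied, or strengthen the hypothesis of the lemma so as to rule out planes containing a subcurve of $\delta$ of degree at least three. As it stands, your sketch would ``prove'' a statement that admits counterexamples.
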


Note that a point $p$ on a non-planar curve $\delta$ lies on infinitely many trisecants of $\delta$ if and only if the projection $\pi_p$ is not generically one-to-one on $\delta$.
Thus, according to the trisecant lemma there are finitely many such projection points on $\delta$.
The following special case of a theorem of Segre's \cite{S36} shows that there are also finitely many such projection points not on $\delta$.

\begin{prop}[Segre \cite{S36}]\label{prop:trisecant}
Let $\delta$ be an irreducible non-planar curve in $\CC\PP^3$.
Then the set of points
\[ X = \setbuilder{x\in\CC\PP^3\setminus \delta}{\text{$\pi_x$ is not generically one-to-one on $\delta$}}\]
is finite.
\end{prop}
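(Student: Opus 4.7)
The plan is to characterize $X$ via cones over $\delta$ and derive a contradiction if $X$ were infinite by a B\'ezout-type argument on cone intersections.

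For each $x \in \CC\PP^3$, let $C_x := \overline{\bigcup_{p \in \delta \setminus \{x\}} \overline{xp}}$ denote the cone over $\delta$ with vertex $x$. Since $\delta$ is irreducible, $C_x$ is an irreducible surface, and $\deg C_x = d/\deg(\pi_x|_\delta)$; hence for $x \notin \delta$, we have $x \in X$ iff $\deg(\pi_x|_\delta) \ge 2$, equivalently $\deg C_x \le d/2 < d$. Moreover, the vertex of an irreducible cone of degree at least $2$ is its unique point of multiplicity equal to the degree, so the assignment $x \mapsto C_x$ is injective on $X$.

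I would argue by contradiction: suppose $X$ is infinite, and let $B$ be an irreducible component of $\overline{X} \setminus \delta$ of positive dimension. Consider the incidence variety
\[ W := \{(p, q, x) \in \delta \times \delta \times B : p \neq q,\ x \in \overline{pq}\}, \]
with projections $\alpha \colon W \to B$ and $\beta \colon W \to \delta \times \delta$. Since $B \cap X$ is dense in $B$ and $\alpha^{-1}(x)$ is at least $1$-dimensional for $x \in B \cap X$, semi-continuity yields $\dim W \ge \dim B + 1$. If $\dim B = 2$, then $\dim W \ge 3$ forces generic fibres of $\beta$ to have dimension $\ge 1$, meaning $\overline{pq} \subseteq B$ for generic $(p,q)$; but then $B$ would contain the secant variety $\mathrm{Sec}(\delta) = \CC\PP^3$, absurd. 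Hence $\dim B = 1$ and $\dim W \ge 2$; the fibres of $\beta$ are generically finite (as $B$ is a curve), so $\beta$ is dominant and every generic chord of $\delta$ meets $B$. Fixing a generic $z \in \delta$, the cone $C_z$ is swept out by chords through $z$, each meeting $B$; thus $B \cap C_z$ is infinite, and since $B$ is irreducible of dimension $1$, $B \subset C_z$.

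By Lemma~\ref{lem:trisecant}, $\pi_z$ is generically one-to-one for all but finitely many $z \in \delta$, so $\deg C_z = d - 1$ for generic smooth $z \in \delta$. Picking two such generic points $z_1 \neq z_2$, the cones $C_{z_1}, C_{z_2}$ are distinct (their unique vertices differ) irreducible surfaces of degree $d-1$ both containing $\delta \cup B$; Corollary~\ref{cor:Bezout1} yields $\deg\delta + \deg B \le (d-1)^2$, bounding $\deg B$. The family $\{C_z : z \in \delta\}$ then consists of distinct cones of degree $d - 1$ through $\delta \cup B$, parametrized injectively by $\delta$, so forms a $1$-parameter sub-family of cones in the linear system $|\mathcal{I}_{\delta \cup B}(d-1)|$. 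The main obstacle is then to show this is impossible: the sub-locus of cones in this linear system is cut out by the discriminant and therefore has strictly smaller dimension, so combined with a Riemann-Roch computation on $\delta \cup B$ using the degree bound above, the cone sub-family has dimension at most $0$. This contradicts the $1$-parameter family $\{C_z\}$ and completes the proof.
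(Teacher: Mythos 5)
The paper does not actually prove Proposition~\ref{prop:trisecant}; it is quoted as a special case of Segre's theorem \cite{S36}, so there is no in-paper argument to compare yours against. Judged on its own terms, your proposal is sound up to and including the reduction to the claim that the irreducible curve $B$ satisfies $B\subset C_z(\delta)$ for all generic (hence infinitely many) $z\in\delta$: the incidence-variety dimension count, the elimination of $\dim B=2$ via the secant variety, and the passage from ``generic chords through $z$ meet $B$'' to $B\subset C_z$ are all correct. The genuine gap is the final step, which you yourself flag as ``the main obstacle'' and then dispatch with an assertion that the cone locus inside the linear system $|\mathcal{I}_{\delta\cup B}(d-1)|$ is cut out by the discriminant and hence is at most $0$-dimensional. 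That assertion is false as a general principle: a linear system of surfaces through a fixed curve can perfectly well contain a one-parameter family of cones. Indeed, for \emph{any} non-planar irreducible $\delta$ of degree $d$, the cones $\{C_z(\delta):z\in\delta\}$ already form a one-parameter family of degree-$(d-1)$ cones through $\delta$ (for the twisted cubic these are exactly the singular members of the net of quadrics through it, a $1$-dimensional discriminant locus). So the existence of a one-parameter family of cones through a curve cannot be the source of the contradiction; the contradiction must exploit that all these cones additionally contain the \emph{extra} component $B$, and your final step never uses $B$ in an essential way. The promised Riemann--Roch computation is also not carried out.

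The irony is that you are one line away from a clean finish using a tool already in the paper. Lemma~\ref{lem:projection2}, applied with $\delta_1=\delta$ (irreducible, not a line since it is non-planar) and $\delta_2=B$ (noting $\delta\cup B$ is non-planar because $\delta$ is), states that there are at most $O(d\cdot\deg B)$ points $z\in\delta$ with $B\subset C_z(\delta)$; this directly contradicts your conclusion that $B\subset C_z$ for a dense set of $z\in\delta$. Since the proof of Lemma~\ref{lem:projection2} uses only the trisecant lemma (Lemma~\ref{lem:trisecant}) and the B\'ezout corollaries, and not Proposition~\ref{prop:trisecant} itself, there is no circularity. With that substitution (and a brief remark ruling out $\dim B=3$, e.g.\ because a generic point of $\CC\PP^3$ projects $\delta$ generically one-to-one), your argument becomes a complete and self-contained proof of Segre's statement, which would be a genuine addition given that the paper only cites it.
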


For a curve $\delta$ and a point $p$ in $\CC\PP^3$, denote the cone over $\delta$ with vertex $p$ by $C_p(\delta)$, that is,
\[ C_p(\delta) := \overline{\pi_p^{-1}(\pi_p(\delta \setminus \{p\}))}. \]
Note that if $p \notin \delta$, then \cite{H92}*{Example~3.10}
\[ C_p(\delta) = \setbuilder{q\in\CC\PP^3\setminus\{p\}}{\text{the line $pq$ intersects $\delta$}}\cup\{p\},\]
and if $p\in\delta$, then 
\[ C_p(\delta) = \setbuilder{q\in\CC\PP^3\setminus\{p\}}{\text{the line $pq$ intersects $\delta$ with multiplicity at least $2$}}\cup\{p\}.\]

\begin{lemma}\label{lem:projection2}
Let $\delta_1$ and $\delta_2$ be two irreducible curves in $\CC\PP^3$ of degree $d_1$ and $d_2$, respectively. 
Suppose $\delta_1$ is not a line, and $\delta_1 \cup \delta_2$ is non-planar.
Then there are at most $O(d_1d_2)$ points $x$ on $\delta_1$ such that $\overline{\pi_x(\delta_1\setminus\{x\})}$ and $\overline{\pi_x(\delta_2\setminus\{x\})}$ coincide, or equivalently, for which $\delta_2\subset C_x(\delta_1)$.
\end{lemma}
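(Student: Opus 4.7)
The plan is to reduce the estimate to a single application of B\'ezout's theorem via a symmetrisation.  First I would dispose of two easy special cases.  If $\delta_1$ is planar, then for every $x\in\delta_1$ the cone $C_x(\delta_1)$ coincides with the plane of $\delta_1$, which cannot contain $\delta_2$ by the non-planarity of $\delta_1\cup\delta_2$, so the bad set is empty.  If $\delta_2$ is a line and $\delta_1$ is non-planar, then the irreducible cone $C_x(\delta_1)$ has degree at least $2$, whose only lines are rulings through the vertex $x$; so $\delta_2\subset C_x(\delta_1)$ forces $x\in\delta_2$, giving $|B|\le|\delta_1\cap\delta_2|\le d_1d_2$ by Corollary~\ref{cor:Bezout3}.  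Assume from now on that $\delta_1$ is non-planar and $\delta_2$ is not a line, and write $B$ for the bad set.

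The key observation is the chain of equivalences
\[
\delta_2\subset C_x(\delta_1)\iff\overline{\pi_x(\delta_1\setminus\{x\})}=\overline{\pi_x(\delta_2\setminus\{x\})}\iff\delta_1\subset C_x(\delta_2),
\]
which holds because under our standing assumptions both projected images are irreducible plane curves, so a strict inclusion of one in the other is impossible and forces equality.  I would next show that $B\subsetneq\delta_1$: the set $B$ is Zariski closed in the irreducible curve $\delta_1$, so otherwise $B=\delta_1$, and then for every $y\in\delta_2\setminus\delta_1$ every line from $y$ to a point of $\delta_1$ would meet $\delta_1$ at a further point, making $\pi_y$ not generically one-to-one on $\delta_1$; by Proposition~\ref{prop:trisecant} only finitely many such $y$ exist, contradicting the infinitude of $\delta_2\setminus\delta_1$.

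Now pick $x_0\in\delta_1\setminus B$.  For each $x\in B$ the equivalence gives $\delta_1\subset C_x(\delta_2)$, whence $x_0\in C_x(\delta_2)$, meaning that the line $xx_0$ meets $\delta_2$, or equivalently $x\in C_{x_0}(\delta_2)$.  Therefore $B\subset\delta_1\cap C_{x_0}(\delta_2)$.  Since $x_0\notin B$, the equivalence also gives $\delta_1\not\subset C_{x_0}(\delta_2)$, so Corollary~\ref{cor:Bezout2} yields
\[
|B|\le|\delta_1\cap C_{x_0}(\delta_2)|\le d_1\cdot\deg C_{x_0}(\delta_2)\le d_1d_2.
\]
The main conceptual hurdle is spotting the symmetric reformulation of the bad condition in terms of $C_x(\delta_2)$; once it is in place, a single B\'ezout estimate delivers the desired $O(d_1d_2)$ bound.
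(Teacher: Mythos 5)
Your overall strategy---symmetrising the bad condition to $\delta_1\subset C_x(\delta_2)$, locating a point $x_0\in\delta_1$ for which this fails, and applying B\'ezout to $\delta_1\cap C_{x_0}(\delta_2)$---is essentially the paper's: there one intersects $\delta_1$ with a cone $C_p(\delta_2)$ for a point $p\in\delta_1\setminus\delta_2$ with $\delta_1\not\subset C_p(\delta_2)$, the existence of such a $p$ coming from the trisecant lemma rather than from Segre's proposition. Your two preliminary special cases ($\delta_1$ planar, $\delta_2$ a line) are handled correctly, and the chain of equivalences is sound under your standing assumptions.

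There is, however, one step that can fail as written: ``$x_0\in C_x(\delta_2)$, meaning that the line $xx_0$ meets $\delta_2$, or equivalently $x\in C_{x_0}(\delta_2)$.'' That equivalence is only valid when $x_0\notin\delta_2$. If $x_0\in\delta_1\cap\delta_2$, then by the paper's description of cones with vertex on the curve, $C_{x_0}(\delta_2)$ consists of the points $q$ for which the line $x_0q$ meets $\delta_2$ \emph{with multiplicity at least two}, and the mere fact that the line $xx_0$ passes through the point $x_0\in\delta_2$ gives nothing; so the containment $B\subseteq\delta_1\cap C_{x_0}(\delta_2)$ is not justified for such an $x_0$. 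Since you have only established $B\neq\delta_1$, your chosen $x_0$ could a priori lie in $\delta_1\cap\delta_2$. The fix is straightforward: your Segre argument in fact shows that $B$ cannot be cofinite in $\delta_1$ (the same contradiction arises if $\delta_2\subset C_x(\delta_1)$ merely for all $x$ in a cofinite subset of $\delta_1$), and since $\delta_1\cap\delta_2$ is finite this lets you choose $x_0\in\delta_1\setminus(B\cup\delta_2)$; alternatively, bound $|B\cap\delta_2|\le|\delta_1\cap\delta_2|\le d_1d_2$ separately by Corollary~\ref{cor:Bezout3}, as the paper does, at the cost of a factor $2$ in the final bound. A smaller point: your assertion that $B$ is Zariski closed in $\delta_1$ is neither justified nor needed---to pick $x_0$ you only require the (suitably strengthened) statement that $B$ is not cofinite.
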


\begin{proof}
Let $X = \setbuilder{x\in\delta_1}{\overline{\pi_x(\delta_1\setminus\{x\})} = \overline{\pi_x(\delta_2\setminus\{x\})}}$, and let 
\[S = \delta_1\cap\bigcap_{p\in\delta_1\setminus\delta_2} C_p(\delta_2).\]
We claim that $X\setminus\delta_2 = S\setminus\delta_2$.

First, let $x\in X\setminus\delta_2$ and $p\in\delta_1\setminus\delta_2$.
If $x=p$, then clearly $x\in C_p(\delta_2)$.
Otherwise, $\pi_x(p)\in\pi_x(\delta_1\setminus\{x\})$.
Since $x\in X$, $\overline{\pi_x(\delta_1\setminus\{x\})} = \overline{\pi_x(\delta_2\setminus\{x\})}$, and since $x\notin\delta_2$, $\overline{\pi_x(\delta_2\setminus\{x\})} = \pi_x(\delta_2\setminus\{x\})$.
Therefore, $\pi_x(p)\in\pi_x(\delta_2\setminus\{x\})$, and it follows that the line $px$ intersects $\delta_2$, hence $x\in C_p(\delta_2)$.
Since $X\subseteq\delta_1$, we conclude that $x\in S\setminus\delta_2$.

Conversely, let $x\in S\setminus\delta_2$.
Then $x\in\delta_1$, and for all $p\in\delta_1\setminus\delta_2$, we have $x\in C_p(\delta_2)$.
Thus, if $x\neq p$, then the line $px$ intersects $\delta_2$.
Therefore, $\pi_x(\delta_1\setminus\{x\})\subseteq\pi_x(\delta_2\setminus\{x\})$.
Since $\delta_2$ is irreducible, the curve $\overline{\pi_x(\delta_2\setminus\{x\})}$ is irreducible.
Since $\delta_1$ is not a line, $\overline{\pi_x(\delta_1\setminus\{x\})}$ does not degenerate to a point.
Therefore, $\overline{\pi_x(\delta_1\setminus\{x\})} = \overline{\pi_x(\delta_2\setminus\{x\})}$, and $x\in X$.

Next, note that each $x\in S$ lies on infinitely many trisecants of the curve $\delta_1\cup\delta_2$.
Since $\delta_1 \cup \delta_2$ is non-planar, $S$ is finite by the trisecant lemma (Lemma~\ref{lem:trisecant}).
Therefore, $\delta_1\not\subset C_p(\delta_2)$ for some $p\in\delta_1\setminus\delta_2$.
By B\'ezout's theorem (Corollary~\ref{cor:Bezout2}), $|S|\le|\delta_1\cap C_p(\delta_2)| \le d_1\deg(C_p(\delta_2))\le d_1d_2$.
Again by B\'ezout's theorem (Corollary~\ref{cor:Bezout3}), $|\delta_1\cap\delta_2|\le d_1d_2$.
It then follows that $|X| \le |X\setminus\delta_2| + |\delta_1\cap\delta_2| = |S\setminus\delta_2| + |\delta_1\cap\delta_2| \le 2d_1d_2$.
\end{proof}

The following result is the $1$-dimensional case of a result from Ballico \cite{Ballico2003}; see also \cite{Ballico2004}*{Remark~1}.
For convenience we provide the proof of this special case.

\begin{lemma}\label{lem:projection3}
Let $\delta$ be an irreducible 
non-planar curve in $\CC\PP^3$ of degree $d$.
Then there are at most $O(d^3)$ points $x\in\CC\PP^3\setminus\delta$ such that $\pi_x$ restricted to $\delta$ is not generically one-to-one.
\end{lemma}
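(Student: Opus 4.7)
The plan is to apply Segre's theorem (Proposition~\ref{prop:trisecant}) for the finiteness of
\[ X := \setbuilder{x \in \CC\PP^3 \setminus \delta}{\pi_x \text{ is not generically one-to-one on } \delta}, \]
and then to confine $X$ inside the intersection of three carefully chosen cones, where B\'ezout provides the quantitative $O(d^3)$ bound. The key observation is that for every $x \in X$ the projection $\pi_x|_\delta$ is generically at least $2$-to-$1$, so for generic $q \in \delta$ the line $\overline{xq}$ meets $\delta$ at a second point, placing $x \in C_q(\delta)$. Since $X$ is finite and $\pi_q|_\delta$ is generically one-to-one for all but finitely many $q \in \delta$ (by the trisecant lemma), three generic points $q_1, q_2, q_3 \in \delta$ can be chosen so that $X \subseteq C_{q_1}(\delta) \cap C_{q_2}(\delta) \cap C_{q_3}(\delta)$, each $C_{q_i}(\delta)$ is an irreducible surface of degree $d-1$, and the three cones are pairwise distinct.

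By Corollary~\ref{cor:Bezout1}, the components of $C_{q_1}(\delta) \cap C_{q_2}(\delta)$ are curves of total degree at most $(d-1)^2$. As $\delta$ is one such component, the union $\gamma$ of the remaining components satisfies $\deg(\gamma) \le (d-1)^2 - d$, and $X \cap \delta = \emptyset$ gives $X \subseteq \gamma$. Provided that $\gamma \cap C_{q_3}(\delta)$ is zero-dimensional, Corollary~\ref{cor:Bezout2} then yields
\[ |X| \le |\gamma \cap C_{q_3}(\delta)| \le \deg(\gamma)\,(d-1) \le (d-1)^3 = O(d^3). \]

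The main obstacle is verifying properness of this last intersection. Suppose some irreducible component $\gamma_0 \subseteq \gamma$ were contained in $C_{q_3}(\delta)$ for $q_3$ in a Zariski-dense subset of $\delta$; then by upper semicontinuity of fibre dimension the locus of such $q_3$ is closed in $\delta$, and hence all of $\delta$. However, a direct set-theoretic check gives
\[ \bigcap_{q \in \delta} C_q(\delta) = \delta \cup X, \]
since a point $p \notin \delta$ lies in every $C_q(\delta)$ precisely when every line from $p$ to $\delta$ is a multisecant, which is exactly the definition of $X$. Because $X$ is finite and $\gamma_0$ is a one-dimensional irreducible curve, this forces $\gamma_0 = \delta$, contradicting the construction of $\gamma$. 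Hence for generic $q_3$, no component of $\gamma$ is absorbed into $C_{q_3}(\delta)$, and the $O(d^3)$ bound follows.
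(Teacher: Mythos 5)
Your argument is correct and follows essentially the same route as the paper: Segre's theorem gives finiteness of $X$, the containment $X \subseteq C_{q}(\delta)$ for all but finitely many $q \in \delta$ traps $X$ in an intersection of cones over $\delta$, and B\'ezout yields the $O(d^3)$ count; the paper differs only in bookkeeping, choosing a separate third cone for each residual component and handling the line joining the first two vertices by a second pass with a third non-collinear vertex, rather than by your single well-chosen $q_3$. One small imprecision: the identity $\bigcap_{q\in\delta} C_q(\delta) = \delta \cup X$ should really be the inclusion $\bigcap_{q\in\delta} C_q(\delta) \subseteq \delta \cup X$ (a point of $X$ need not lie in $C_q(\delta)$ for the finitely many exceptional $q$), and justifying even that inclusion requires first discarding the finitely many tangent lines and singular points of $\delta$ visible from a given point before concluding that the generic line is a genuine bisecant --- but only this inclusion is used in your contradiction, so the proof stands.
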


\begin{proof}
By Proposition~\ref{prop:trisecant}, the set
\[ X = \setbuilder{x\in\CC\PP^3\setminus \delta}{\text{$\pi_x$ is not generically one-to-one on $\delta$}}\]
is finite, and we want to show that $|X|=O(d^3)$.

Let $x\in X$.
Since $\delta$ has finitely many singularities and there are only finitely many lines through $x$ that are tangent to $\delta$, we have that for all but finitely many points $p\in\delta$, the line $px$ intersects $\delta$ in a third point, that is, $x\in C_p(\delta)$ for all $p\in\delta\setminus E_x$, for some finite subset $E_x$ of $\delta$.
Let $\delta'=\delta\setminus\bigcup_{x\in X}E_x$ and $S=\bigcap_{p\in\delta'} C_p(\delta)$.
Then clearly $X\subseteq S\setminus\delta$.
Conversely, if $x\in S\setminus\delta$, then for any $p\in\delta'$, the line $px$ intersects $\delta$ with multiplicity at least $2$.
Since only finitely many lines through $x$ can be tangent to $\delta$, it follows that for all but finitely many points $p\in\delta$, the line $px$ intersects $\delta$ in a third point, hence $x\in X$.
This shows that $X=S\setminus\delta$.

Fix distinct $p,p'\in\delta'$.
Then $X\subseteq C_p(\delta)\cap C_{p'}(\delta)$.
This intersection consists of $\delta$, the line $pp'$, and some further irreducible curves $\delta_1, \dots, \delta_k$ of total degree at most $d^2-d-1$, by B\'ezout's theorem (Corollary~\ref{cor:Bezout1}).

If some $\delta_i\subset C_p(\delta)$ for all $p\in\delta'$, then $\delta_i\subseteq S$, and since $\delta_i\cap\delta$ is finite by B\'ezout's theorem (Corollary~\ref{cor:Bezout3}), we obtain that $X$ is infinite, a contradiction.
Therefore, for each $\delta_i$ there is a point $p_i\in\delta'$ such that $\delta_i\not\subset C_{p_i}(\delta)$.
By B\'ezout's theorem (Corollary~\ref{cor:Bezout2}), $|X\cap\delta_i| \le |C_{p_i}(\delta)\cap\delta_i| \le d\deg(\delta_i)$.
It follows that $|X\setminus pp'| \le \sum_{i=1}^k|X\cap\delta_i| \le \sum_{i=1}^kd\deg(\delta_i) = O(d^3)$.

Now find a third point $p''\in\delta'$ such that $p,p',p''$ are not collinear.
As before, $|X\setminus pp''| = O(d^3)$.
Since $pp'\cap pp''$ is a singleton, it follows that $|X|=O(d^3)$.
\end{proof}

If an irreducible non-planar curve $\delta$ of degree $d$ is smooth, then by a well-known result going back to Cayley (see \cites{Bertin, LeBarz82, GP82}), the trisecant variety of $\delta$ (the Zariski closure in $\CC\PP^3$ of the union of all trisecants of $\delta$) has degree $O(d^3)$.
For $p \in \delta$, if $\pi_p$ restricted to $\delta \setminus \{p\}$ is not generically one-to-one, then $C_p(\delta)$ is a component of the trisecant variety and has degree at least $2$.
It follows that there can be at most $O(d^3)$ points $p\in\delta$ such that $\pi_p$ is not generically one-to-one on $\delta$.

However, if $\delta$ is not smooth, we are not aware of any estimates of the degree of the trisecant variety, and we thus include the proof of the weaker bound $O(d^4)$ below in Lemma~\ref{lem:projection1}, based on an argument of Furukawa \cite{F11}.
This result answers the $1$-dimensional case of a question of Ballico \cite{Ballico2004}*{Question~1}.

We say that a point $z\in\CC\PP^3$ is a \emph{vertex} of a surface $Z$ in $\CC\PP^3$ if the projection $\overline{\pi_z(Z \setminus \{z\})}$ is a planar curve with $Z$ equal to the cone $C_z(\overline{\pi_z(Z\setminus\{z\})})$.
In \cite{F11}*{Lemma~2.3}, the vertices of a surface is characterised in terms of partial derivatives.
For any $4$-tuple of non-negative integers $\vi=(i_0,i_1,i_2,i_3)$, we define $|\vi| = i_0+i_1+i_2+i_3$.
For any homogeneous polynomial $f\in\CC[x_0, x_1, x_2, x_3]$ of degree $e$, we define
\[ D_{\vi} f =
\frac{\partial^{i_0}}{\partial x_0^{i_0}}
\frac{\partial^{i_1}}{\partial x_1^{i_1}}
\frac{\partial^{i_2}}{\partial x_2^{i_2}}
\frac{\partial^{i_3}}{\partial x_3^{i_3}} f. \]
Let $D f$ be the column vector $[D_{\vi} f]_{\vi}$, where $\vi$ varies over all $4$-tuples such that $|\vi| = e-1$.
Then $D f$ is an $\binom{e+2}{3}$-dimensional vector of linear forms in $x_0,x_1,x_2,x_3$.
According to \cite{F11}*{Lemma~2.3}, $z$ is a vertex of the surface $Z$ defined by the homogeneous polynomial $f$ of degree $e$ if and only if $(D f)(z)$ is the zero vector.

\begin{lemma}\label{lem:projection1}
Let $\delta$ be an irreducible non-planar curve of degree $d$ in $\CC\PP^3$.
Then there are at most $O(d^4)$ points $x$ on $\delta$ such that $\pi_x$ restricted to $\delta \setminus \{x\}$ is not generically one-to-one.
\end{lemma}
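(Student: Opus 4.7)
The plan is to follow Furukawa's approach from \cite{F11}, combining his characterisation of vertices of a surface (recalled just before the lemma) with B\'ezout's theorem.

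First I would reduce to counting vertices of bounded-degree surfaces through $\delta$. If $x \in \delta$ is a bad point (i.e.\ $\pi_x$ restricted to $\delta\setminus\{x\}$ is not generically one-to-one), then $\pi_x$ is generically $k$-to-one on $\delta$ for some $k \ge 2$, so $\overline{\pi_x(\delta\setminus\{x\})}$ has degree at most $(d-1)/k \le (d-1)/2$. Hence the cone $C_x(\delta)$ is an irreducible surface of degree at most $e := \lfloor (d-1)/2 \rfloor$ that contains $\delta$ and has $x$ as a vertex. Thus every bad point is a vertex of some surface of degree at most $e = O(d)$ containing $\delta$.

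Let $V_e \subset \CC[x_0, x_1, x_2, x_3]_e$ denote the linear subspace of homogeneous degree-$e$ polynomials vanishing on $\delta$. For any $f \in V_e$, by \cite{F11}*{Lemma~2.3} the vertex set of $Z(f)$ is the common zero locus in $\CC\PP^3$ of the $N := \binom{e+2}{3} = O(d^3)$ linear forms comprising $Df$, and these linear forms depend linearly on $f$. I would then consider the incidence variety
\[ \Sigma := \bigl\{ (x, [f]) \in \CC\PP^3 \times \PP(V_e) : (Df)(x) = 0 \bigr\}, \]
and let $W \subseteq \CC\PP^3$ be its image under projection to the first factor. By construction $W$ contains every bad point, and by Segre's Proposition~\ref{prop:trisecant} the set of bad points is finite, so $\delta \not\subseteq W$.

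The crucial step is to bound $\deg W = O(d^3)$: since $(Df)(x)$ is bilinear in $(f, x)$, the variety $W$ is cut out in $\CC\PP^3$ as the locus where an $N \times \dim V_e$ matrix with entries linear in $x$ fails to have maximal column rank; slicing $\PP(V_e)$ by a generic linear subspace of the appropriate codimension and applying a Porteous-type degree calculation yields $\deg W = O(N) = O(d^3)$. Granting this, B\'ezout's theorem (Corollary~\ref{cor:Bezout2}) gives
\[ |B| \le |\delta \cap W| \le d \cdot \deg W = O(d^4), \]
as required. The hardest part is the degree estimate for $W$: the naive union $\bigcup_{x\in B} C_x(\delta)$ is not algebraically defined (and the cones $C_x(\delta)$ have no a priori algebraic parametrisation of small dimension), so $W$ must be extracted from the incidence $\Sigma$ while keeping quantitative control. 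Furukawa's partial-derivative characterisation is precisely what converts the geometric condition ``$x$ is a vertex of $Z(f)$'' into a linear-algebraic rank condition, enabling the determinantal degree estimate.
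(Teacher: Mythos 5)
Your overall strategy --- using Furukawa's characterisation of vertices to turn ``$x$ is the vertex of a low-degree cone through $\delta$'' into a rank condition on a matrix of linear forms, and then applying B\'ezout --- is exactly the paper's. But there are two genuine gaps in the execution. First, the step you yourself flag as the crucial one, the bound $\deg W = O(d^3)$ via a ``Porteous-type'' calculation, is neither justified nor needed. The rank-drop locus of an $N\times r$ matrix of linear forms on $\CC\PP^3$ will in general have excess dimension relative to the expected codimension $N-r+1$, so Thom--Porteous does not compute its degree; and even in the expected-dimension case the formula gives $\binom{N}{r-1}$, not $O(N)$. The paper sidesteps this entirely: once one knows $\delta\cap W$ is finite, at least one $r\times r$ minor of the matrix does not vanish identically on $\delta$; that single minor defines a surface of degree at most $r=O(d^3)$, and Corollary~\ref{cor:Bezout2} applied to $\delta$ and that one surface already yields the $O(d^4)$ bound. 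You should replace the degree estimate for $W$ by this one-minor argument.

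Second, your deduction that $\delta\not\subseteq W$ is a non sequitur: knowing that $W$ \emph{contains} the finitely many bad points says nothing about whether $W$ contains all of $\delta$. What is needed is the converse direction of Furukawa's lemma: if $x\in\delta\cap W$, then some $f\in V_e$ has $(Df)(x)=0$, so $x$ is the vertex of a cone of degree at most $e<d-1$ containing $\delta$, which forces $x$ to be either a singular point of $\delta$ or a bad point; both sets are finite, so $\delta\cap W$ is finite and $\delta\not\subseteq W$. (Relatedly, the finiteness of the set of bad points \emph{on} $\delta$ is the trisecant lemma, Lemma~\ref{lem:trisecant}, not Segre's Proposition~\ref{prop:trisecant}, which concerns projection centres off the curve.) With these two repairs your argument coincides with the paper's proof.
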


\begin{proof}
Let $X$ be the set of points $x$ on $\delta$ such that $\pi_x$ restricted to $\delta \setminus \{x\}$ is not generically one-to-one.
Let $V \subseteq \CC[x_0, x_1, x_2, x_3]$ be the vector space of homogeneous polynomials of degree $d-2$ that vanish on $\delta$, and let $h_1,\dots,h_r$ be a basis of $V$.
Consider the matrix $A = [Dh_1, \dots, Dh_r]$.

Suppose first that $x \in X$.
Then $\deg(\overline{\pi_x(\delta\setminus\{x\}}) \le d-2$, and there exists a cone of degree $\le d-2$ with vertex $x$ containing $\delta$.
It follows that there is a polynomial $f\in V$ such that $Z_\CC(f)$ contains $\delta$.
By \cite{F11}*{Lemma~2.3}, the rank of $A(x) = [Dh_1(x), \dots, Dh_r(x)]$ is less than $r$, so each $r \times r$ minor of $A$ vanishes at $x$.
Note that each such minor defines a surface of degree at most $r$.
Conversely, if $x$ lies on all of the surfaces defined by the $r\times r$ minors of $A$, then $A(x)$ has rank less than $r$.
There then exists $f\in V$ such that $(Df)(x)$ is the zero vector.
By \cite{F11}*{Lemma~2.3}, $x$ is a vertex of $Z_\CC(f)$, which is a surface of degree at most $d-2$ and contains $\pi_x(\delta\setminus\{x\})$, so either $x$ is a singular point of $\delta$ or $x \in X$.

Since $\delta$ has at most $O(d^2$) singular points, it will follow that $X$ has at most $O(d^4)$ points if we can show that there are at most $O(d^4)$ points in
\[ \delta \cap \setbuilder{x\in\CC\PP^3}{\rank(A(x)) < r}.\]
Now $X$ is finite by the trisecant lemma (Lemma~\ref{lem:trisecant}), so $\delta$ is not a subset of all of the surfaces defined by the $r \times r$ minors of $A(x)$.
Fix one such surface $Z$ not containing $\delta$.
It has degree at most $r$, so by B\'ezout's theorem (Corollary~\ref{cor:Bezout2}), $\delta\cap Z$ has at most $dr$ points.
Since $r=O(d^3)$, the lemma follows.
\end{proof}

We have no reason to believe that the estimate $O(d^4)$ in the above lemma is best possible.

\section{Space quartics}\label{sec:quartics}

\subsection{Classification of space quartics}\label{ssec:classification}

By a \emph{space quartic}, we mean an irreducible non-planar curve of degree $4$ in $\CC\PP^3$.
Such a curve meets a generic plane in four points, and a generic quadric surface in eight points.
Since the dimension of the vector space of degree $2$ homogeneous polynomials in three variables is $10$, we can fit a quadric $Q$ through any nine points on a space quartic.
It follows by B\'ezout's theorem (Corollary~\ref{cor:Bezout2}) that the space quartic is contained in $Q$. 
A space quartic $\delta$ is said to be of the \emph{first species} if more than one quadric contains $\delta$, in which case all the quadrics containing $\delta$ form a pencil (see below).
Otherwise it is said to be of the \emph{second species}, where $\delta$ is contained in a unique quadric.
The facts collected here are well known in the sense that they were discovered in the 19th century, see \cites{S1851, F1895, B1871, W1871, R1900, T36}, but it is not easy to find recent references, and so we include some of the proofs.

The homogeneous quadratic polynomial defining a quadric $Q$ in $\FF\PP^3$ can be written as $q(x) = x^TA_Qx$, where $A_Q$ is a symmetric $4\times 4$ matrix with entries in $\FF$.
A pencil of quadrics is a collection of quadrics defined by linear combinations of two linearly independent quadratic polynomials $p$ and $q$:
$\setbuilder{Z_\FF(\lambda p+\mu q)}{\lambda,\mu\in\FF}$.

If $\delta$ is a smooth space quartic of the first species, then it is well known to be an elliptic curve \cite{S39}*{Sections~14.713 and 14.714}. 
Otherwise $\delta$ is rational~\cite{S39}*{Sections~14.717 and~14.723}, \cite{T36}*{Chapter II} and can be parametrised as $[f_1(t), f_2(t), f_3(t), f_4(t)]$ for polynomials $f_1, f_2, f_3, f_4 \in \FF[t]$ with no common factor and maximum degree $4$, with $t \in \FF$. 
Note that while $\delta$ is a curve in $\FF\PP^3$ and should be parametrised by the projective line $\FF\PP^1$, we consider $\delta$ to be parametrised by $t \in \FF$ for simplicity, where $t$ corresponds to $[t,1]\in\FF\PP^1$. 
We omit the simple proof of the following simplified parametrisation of an arbitrary rational space quartic.

\begin{prop}\label{prop:param}
Let $\delta$ be a rational space quartic in $\FF\PP^3$ parametrised by $t \in \FF$. 
After a linear fractional transformation on $\FF$ and a projective transformation on $\FF\PP^3$ if necessary, we can parametrise $\delta$ as 
\begin{equation}\label{eqn:param}
[t^4 - p, t^3 + q, t^2 - r, t + s],
\end{equation}
for some $p,q,r,s \in \FF$ such that $r \ne s^2$, $q \ne s^3$, or $p \ne s^4$.
\end{prop}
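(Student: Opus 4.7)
The plan is to translate everything into linear algebra on the $4$-dimensional subspace $V:=\operatorname{span}(F_0,\dots,F_3)$ of the $5$-dimensional space $U:=\FF[X,Y]_4$ of binary quartic forms, where $F_0,\dots,F_3$ are the homogenised parametrising polynomials. Non-planarity of $\delta$ forces $\dim V=4$, and the no-common-factor hypothesis says that no point of $\FF\PP^1$ is a common zero of the $F_i$. Dehomogenising via $X=1$, $Y=t$, the target basis $\{t^4-p,\,t^3+q,\,t^2-r,\,t+s\}$ exists precisely when $V$ does not contain the constant polynomial $1$, since in that case a simple dimension count forces $V$ to contain a polynomial of each degree $4,3,2,1$, and Gaussian elimination within $V$ then delivers the basis. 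The resulting change of basis in $V$ is a projective transformation of $\FF\PP^3$, so all that must be done by a linear fractional transformation on $\FF$ is to arrange $1\notin V$.

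The crux is therefore the LFT reduction. Write $V=\ker\ell$ for a nonzero $\ell\in U^*$. Under the action $F\mapsto F\circ M^{-1}$ of $M=\bigl(\begin{smallmatrix}a&b\\c&d\end{smallmatrix}\bigr)\in\operatorname{PGL}_2$, the transported subspace is $M\cdot V=\ker(\ell\circ M^{-1})$, so $X^4\in M\cdot V$ if and only if $\ell((aX+bY)^4)=0$. The map $[a:b]\mapsto[(aX+bY)^4]$ is the degree-$4$ Veronese embedding $\FF\PP^1\hookrightarrow\FF\PP(U)=\FF\PP^4$, whose image is a rational normal quartic and linearly spans $\FF\PP^4$. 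Consequently the nonzero functional $\ell$ cannot vanish identically on it, so $(a,b)\mapsto\ell((aX+bY)^4)$ is a nonzero homogeneous polynomial of degree $4$ in $(a,b)$, with at most $4$ projective zeros in $\FF\PP^1$. Since $\FF\PP^1$ is infinite we can choose $[a:b]$ away from those zeros, and any completion to an invertible $M$ yields the required LFT.

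Once $1\notin V$, the dimension count runs as follows: for each $i\in\{1,2,3,4\}$ the affine line $t^i+\FF\cdot 1$ meets $V$ in at least one point (because $V+\FF\cdot 1=\FF[t]_{\le 4}$) and in at most one (because $1\notin V$), call it $t^i+c_i$; the four elements $t+c_1$, $t^2+c_2$, $t^3+c_3$, $t^4+c_4$ are linearly independent by leading degrees, hence form a basis of $V$. Setting $c_4=-p$, $c_3=q$, $c_2=-r$, $c_1=s$ gives the displayed parametrisation. For the non-degeneracy clause, a simultaneous zero $t_0$ of the four $f_i$ forces $t_0=-s$ from $f_3=0$, and the remaining equations then read $r=s^2$, $q=s^3$, $p=s^4$; these equalities indeed produce the common factor $t+s$. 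Hence the no-common-factor condition (preserved by LFTs) is exactly the statement that at least one of $r\ne s^2$, $q\ne s^3$, $p\ne s^4$ holds.

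The main obstacle is the LFT step; it rests on recognising that the Veronese rational normal quartic spans $\FF\PP^4$, so a nonzero linear functional cannot vanish identically on it. Everything else is routine dimension counts and bookkeeping.
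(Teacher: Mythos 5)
The paper does not actually give a proof of Proposition~\ref{prop:param}: it explicitly states ``We omit the simple proof,'' so there is nothing to compare your argument against line by line. Judged on its own, your proof is correct and complete. The reduction of the problem to choosing a hyperplane $V=\ker\ell$ in the five-dimensional space of binary quartics, the observation that the change of basis in $V$ is exactly a projective transformation of $\FF\PP^3$ while the reparametrisation is exactly a linear fractional transformation, and the dimension count producing unique representatives $t^i+c_i$ once $1\notin V$ are all sound. The one step requiring genuine care -- arranging $X^4\notin M\cdot V$ -- is handled correctly: since the rational normal quartic $[a:b]\mapsto[(aX+bY)^4]$ linearly spans $\FF\PP(U)$ (a Vandermonde-type computation valid over any infinite field, so in particular over $\RR$ as well as $\CC$), the form $(a,b)\mapsto\ell((aX+bY)^4)$ is a nonzero quartic with at most four zeros in $\FF\PP^1$, which can be avoided. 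Your treatment of the nondegeneracy clause is also right, and you correctly use that any common factor must divide the linear coordinate $t+s$, so the distinction between ``no common factor'' and ``no common root'' over $\RR$ causes no trouble.
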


With this parametrisation, we get the following condition on when four points, counting multiplicity, on a space quartic are coplanar.

\begin{lemma}\label{lem:coplanar}
Let $\delta$ be a space quartic in $\FF\PP^3$ given by the parametrisation~\eqref{eqn:param}.
A plane intersects $\delta$ in four points parametrised by $t_1, t_2, t_3, t_4$, counting multiplicity, if and only if
\begin{equation}\label{eqn:coplanar}
F(t_1,t_2,t_3,t_4) := t_1t_2t_3t_4 + s \sum_{i<j<k} t_it_jt_k + r \sum_{i < j} t_it_j + q \sum_i t_i + p = 0.
\end{equation}
In particular, if $t_1,t_2,t_3$ are distinct, then $F(t_1,t_1,t_2,t_3)=0$ if and only if the plane through $t_1$, $t_2$, $t_3$ intersects $\delta$ only in $t_1, t_2, t_3$ and contains the tangent of $\delta$ at $t_1$.
\end{lemma}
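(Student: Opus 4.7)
The plan is a direct computation using Vieta's formulas applied to the restriction of a generic plane equation to~$\delta$.

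First, I would write a plane $\Pi = Z_\FF(a_0 x_0 + a_1 x_1 + a_2 x_2 + a_3 x_3)$ and substitute~\eqref{eqn:param} into its defining equation to obtain
\[
g(t) \;=\; a_0 t^4 + a_1 t^3 + a_2 t^2 + a_3 t + (a_3 s - a_2 r + a_1 q - a_0 p),
\]
whose roots with multiplicity are exactly the parameters of $\Pi \cap \delta$. Assuming the four intersection parameters $t_1,t_2,t_3,t_4 \in \FF$ are all finite, we have $a_0 \ne 0$, and Vieta's formulas yield
\[
\tfrac{a_1}{a_0} = -e_1,\quad \tfrac{a_2}{a_0} = e_2,\quad \tfrac{a_3}{a_0} = -e_3,\quad e_4 = \tfrac{a_3 s - a_2 r + a_1 q - a_0 p}{a_0},
\]
where $e_k$ denotes the $k$-th elementary symmetric polynomial in $t_1,\dots,t_4$. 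Substituting the first three identities into the fourth eliminates the $a_i$ and rearranges to $e_4 + s e_3 + r e_2 + q e_1 + p = 0$, which is exactly~\eqref{eqn:coplanar}. Conversely, given $t_1,\dots,t_4 \in \FF$ with $F(t_1,\dots,t_4) = 0$, I would set $(a_0,a_1,a_2,a_3) = (1,-e_1,e_2,-e_3)$; these same Vieta identities, read in reverse, force $g(t) = \prod_{i=1}^{4}(t-t_i)$, so the resulting plane meets $\delta$ in precisely the prescribed parameters.

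For the ``in particular'' clause, I would note that $F$ is symmetric in its four arguments and affine-linear in each, so in particular $F(t_1,t_1,t_2,t_3) = F(t_1,t_2,t_3,t_1)$. Given three distinct $t_1,t_2,t_3$ spanning a plane $\Pi$, the first part of the lemma identifies the fourth intersection parameter $t_4$ of $\Pi$ with $\delta$ as the unique solution of the linear-in-$t_4$ equation $F(t_1,t_2,t_3,t_4) = 0$. Hence $F(t_1,t_1,t_2,t_3) = 0$ is equivalent to $t_4 = t_1$, which in turn is equivalent to $t_1$ being a root of $g$ of multiplicity at least two; geometrically, this is precisely the statement that $\Pi$ is tangent to $\delta$ at $t_1$ and meets $\delta$ only in $t_1,t_2,t_3$.

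The only step requiring any real care is the algebraic-geometric translation ``double root of $g$ at $t_1$'' $\Leftrightarrow$ ``$\Pi$ contains the tangent to $\delta$ at $t_1$'', which I expect to handle by observing that this tangent line is spanned by the position vector and the parametric derivative at $t_1$ and therefore lies in $\Pi$ exactly when both $g(t_1)$ and $g'(t_1)$ vanish. Beyond this, the argument is a routine Vieta computation, so I do not foresee any substantial obstacle; the tacit assumption that no intersection point lies at $[1,0,0,0]$ (the image of $t = \infty$) is already built into the hypothesis that the $t_i$ are finite.
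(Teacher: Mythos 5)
Your proposal is correct and follows essentially the same route as the paper: substitute the parametrisation into a general plane equation and equate the resulting quartic in $t$ with $a_0\prod_i(t-t_i)$, the Vieta identities being just the explicit form of the paper's ``equating coefficients'' step. Your additional care with the converse direction and with the ``in particular'' clause (double root of $g$ at $t_1$ $\Leftrightarrow$ the plane contains the tangent there) fills in details the paper leaves implicit, but introduces nothing different in substance.
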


\begin{proof}
The plane in $\FF\PP^3$ with equation $a_1 x + a_2 y + a_3 z + a_4 w = 0$ intersects $\delta$ in $t_1, t_2, t_3, t_4$ if and only if we have
\begin{equation*}
a_1 (t^4 - p) + a_2 (t^3 + q) + a_3 (t^2 - r) + a_4 (t + s) \equiv a_1(t - t_1)(t - t_2)(t - t_3)(t - t_4).
\end{equation*}
Equating coefficients, the two polynomials in $t$ are then equal identically if and only if $F = 0$, as desired.
\end{proof}

Let $f(t) := F(t,t,t,t)$ be the so-called restitution of the multilinear form $F$ \cite{D03}*{Section~1.2}, and let $g$ be the binary quartic form obtained from homogenising $f$:
\begin{equation}\label{eqn:quartic}
g(\lambda, \mu) = \lambda^4 + 4s\lambda^3 \mu + 6r\lambda^2 \mu^2 + 4q\lambda \mu^3 + p\mu^4.
\end{equation}
We call $g$ the \emph{fundamental quartic} of $\delta$. The \emph{catalecticant} of $g$ is defined to be
\begin{equation*}
\cat(g) :=
\begin{vmatrix}
1 & s & r\\
s & r & q\\
r & q & p\\
\end{vmatrix}
= pr - q^2 - ps^2 + 2qrs - r^3.
\end{equation*}
This is an invariant of $g$, in the sense that it remains unchanged under a linear change of variables
\begin{equation*}
\begin{pmatrix}
\lambda' \\ \mu'
\end{pmatrix}
=
\begin{pmatrix}
a & b \\ c & d
\end{pmatrix}
\begin{pmatrix}
\lambda \\ \mu
\end{pmatrix}
,
\end{equation*}
where $ad - bc = 1$ \cite{D03}*{Example~1.4}.
The catalecticant of a binary quartic form was discovered by Boole \cite{W08}, and generalised to binary forms of even degree by Sylvester \cite{S1851} (who coined the term).
Sylvester \cite{S1851} also showed that a generic binary form of degree $d$ is the sum of two $d$-th powers of linear forms if and only if a certain matrix does not have full rank.
We need a version of this statement that is valid for all binary forms, not only generic ones, as can be found in Kanev \cite{K99}.
The following version, formulated only for quartic forms, will be used in Section~\ref{sec:quad}.

\begin{theorem}[Sylvester]\label{thm:sylvester}
A non-zero binary quartic form 
\[g(\lambda, \mu) = 
\lambda^4 + 4c_1\lambda^3 \mu + 6c_2\lambda^2 \mu^2 + 4c_3\lambda \mu^3 + c_4\mu^4 \in \CC[\lambda, \mu]\] 
can be expressed as one of 
\[ (a \lambda + b \mu)^4, (a_1 \lambda + b_1 \mu)^4 + (a_2 \lambda + b_2 \mu)^4, (a_1 \lambda + b_1 \mu)(a_2 \lambda + b_2 \mu)^3,\]
for some $a, b, a_1, b_1, a_2, b_2 \in \CC$, where $a_1b_2 \ne b_1a_2$, if and only if $\cat(g)$ vanishes.
\end{theorem}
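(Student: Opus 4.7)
The plan is to use the classical apolarity dictionary between quadratic forms and constant-coefficient quadratic differential operators. For any linear form $L = a\lambda + b\mu$, define its apolar operator $L^\perp := b\,\partial_\lambda - a\,\partial_\mu$. A direct computation shows $L^\perp(L) = 0$, hence by the product rule $L^\perp(L^k) = 0$ for every $k \ge 1$; moreover, apolar operators of different linear forms commute. The first step is to identify the catalecticant as the obstruction to apolarity: expanding $Dg$ for a general quadratic operator $D = \alpha\,\partial_\lambda^2 + 2\beta\,\partial_\lambda\partial_\mu + \gamma\,\partial_\mu^2$ yields a quadratic form in $(\lambda,\mu)$, and setting its coefficients to zero produces a $3\times 3$ linear system in $(\alpha,\beta,\gamma)$ whose coefficient matrix has determinant $2\cat(g)$. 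Hence $\cat(g) = 0$ if and only if some nonzero quadratic differential operator annihilates $g$.

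For the forward direction, each of the three structural forms admits an explicit annihilator. If $g = L_1^4 + L_2^4$, take $D = L_1^\perp L_2^\perp$: by commutativity and $L_i^\perp(L_i^4) = 0$, one has $Dg = 0$. If $g = L^4$, then $(L^\perp)^2$ works. If $g = L_1 L_2^3$, take $D = (L_2^\perp)^2$; the first application yields $(L_2^\perp L_1)\,L_2^3$ (using $L_2^\perp L_2^3 = 0$), which is a scalar multiple of $L_2^3$, and a second application of $L_2^\perp$ kills it. In every case $\cat(g) = 0$.

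For the converse, I would choose a nonzero quadratic $D$ with $Dg = 0$ and factor it over $\CC$ into two linear factors; each such factor has the form $L^\perp$, so $D = L_1^\perp L_2^\perp$ for (possibly equal) linear forms $L_1, L_2$. If $L_1 \not\propto L_2$, an invertible substitution in $(\lambda,\mu)$ maps $L_1 \mapsto \lambda'$, $L_2 \mapsto \mu'$, turning $D$ into a scalar multiple of $\partial_{\lambda'}\partial_{\mu'}$; then $\partial_{\lambda'}\partial_{\mu'}g = 0$ kills all mixed-degree coefficients, so $g = \alpha L_1^4 + \beta L_2^4$ in the original coordinates, which (after taking fourth roots in $\CC$) is either a fourth power or a sum of two fourth powers. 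If $L_1 \propto L_2$, then $D$ is proportional to $(L^\perp)^2$; the analogous coordinate change turns this into $\partial_{\mu'}^2$, and $\partial_{\mu'}^2 g = 0$ forces $g$ to have degree at most $1$ in $\mu'$, giving $g = L^3 \cdot M$ for a linear form $M$, which is either a fourth power (when $M \propto L$) or the third listed form. Since the three structural shapes are stable under invertible linear substitutions in $(\lambda,\mu)$, reverting the coordinate change preserves the conclusion.

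The main obstacle is really only the case split in the converse; in particular, spotting that the degenerate factorisation $D = (L^\perp)^2$ is what forces the third, otherwise mysterious structural form $L_1 L_2^3$, distinct from the rank-one fourth-power case.
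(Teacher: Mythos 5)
Your proof is correct. For context: the paper does not prove Theorem~\ref{thm:sylvester} at all --- it records it as a classical fact and points to Kanev's paper for a version valid for all (not just generic) binary forms --- so your argument is a self-contained substitute rather than a variant of anything in the text. It is essentially the classical apolarity proof. The pivotal identification checks out: writing $D=\alpha\,\partial_\lambda^2+2\beta\,\partial_\lambda\partial_\mu+\gamma\,\partial_\mu^2$ and setting the three coefficients of the quadratic $Dg$ to zero gives a $3\times3$ system whose coefficient matrix is the catalecticant matrix with its middle column doubled, so a non-zero annihilating $D$ exists precisely when $\cat(g)=0$. The three explicit annihilators in the forward direction are right, and the converse correctly splits on whether the two linear factors of $D$ are proportional, with the degenerate factorisation $D=(L^\perp)^2$ producing exactly the third shape $L_1L_2^3$. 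Two small points you should make explicit in a final write-up, though both are immediate: a product $L_1^\perp L_2^\perp$ of non-zero first-order operators is itself non-zero (the constant-coefficient operators form a polynomial ring, hence a domain), and the hypothesis $g\neq 0$ is what rules out the degenerate subcases in the converse (both coefficients vanishing in $\alpha L_1^4+\beta L_2^4$, or the whole form vanishing in the repeated-factor case). The normalisation of the $\lambda^4$-coefficient to $1$ in the statement is harmless, since each of the three shapes is stable under scaling by a non-zero constant (absorb a fourth root of the scalar into the linear forms), so your argument for general quartics suffices.
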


A more immediate application of the catalecticant is the following condition on when a rational space quartic is of the first species \cite{F1895}*{Section~6}.

\begin{lemma}\label{lem:first}
Let $\delta$ be a space quartic in $\FF\PP^3$ given by the parametrisation~\eqref{eqn:param}.
Then $\delta$ is of the first species if and only if the catalecticant of its fundamental quartic vanishes.
\end{lemma}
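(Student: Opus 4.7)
The plan is to compute the dimension of the space of quadrics containing $\delta$ via the parametrisation~\eqref{eqn:param}. The $10$-dimensional space of quadrics $Q=\sum_{i\le j}c_{ij}x_ix_j$ maps into the $9$-dimensional space of polynomials in $t$ of degree at most $8$ upon substitution; the kernel, of dimension at least $1$, is the space of quadrics containing $\delta$, and $\delta$ is of the first species exactly when this kernel has dimension at least $2$.

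Expanding $Q(t^4-p,t^3+q,t^2-r,t+s)$, the coefficients of $t^8,t^7,t^6,t^5$ involve only $c_{00},c_{01},c_{02},c_{03},c_{11},c_{12}$ and force $c_{00}=c_{01}=0$, $c_{11}=-c_{02}$, $c_{12}=-c_{03}$. After this elimination, the remaining coefficients (of $t^4,t^3,t^2,t^1,t^0$) yield a $5\times 6$ linear system $Mv=0$ in $(c_{02},c_{03},c_{13},c_{22},c_{23},c_{33})$ with
\[
M = \begin{pmatrix}
-r & s & 1 & 1 & 0 & 0 \\
-2q & r & s & 0 & 1 & 0 \\
-p & -q & 0 & -2r & s & 1 \\
0 & -p & q & 0 & -r & 2s \\
pr-q^2 & qr-ps & qs & r^2 & -rs & s^2
\end{pmatrix}.
\]
Hence $\delta$ is of the first species if and only if $\rank M\le 4$, equivalently every $5\times 5$ minor of $M$ vanishes.

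It remains to identify this rank-drop with $\cat(g)=0$. For the direction $\cat(g)=0\Rightarrow\rank M\le 4$, I would apply Sylvester's theorem (Theorem~\ref{thm:sylvester}) and the normalisation freedom afforded by Proposition~\ref{prop:param} to reduce, via a linear fractional transformation on $t$, to one of the standard forms $\lambda^4$, $\lambda^4+\mu^4$, or $\lambda(\lambda+\mu)^3$ for $g$; in each normalised case, a second independent quadric through $\delta$ is exhibited by direct inspection. For the converse direction, I would compute a convenient $5\times 5$ minor of $M$ explicitly, for instance the one obtained by removing the second column. The observation that $pr-q^2$ and $qr-ps$ are precisely the first two $2\times 2$ cofactors of the Hankel matrix $H=\begin{pmatrix}1 & s & r\\ s & r & q\\ r & q & p\end{pmatrix}$ of determinant $\cat(g)$ guides the cofactor expansion and reveals $\cat(g)$ as a factor (the chosen minor equals $(rs-q)\cat(g)$); a second well-chosen minor produces a cofactor coprime to $rs-q$, and together the system of vanishing minors forces $\cat(g)=0$.

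The main obstacle is the algebraic bookkeeping in the determinant expansion required to isolate $\cat(g)$ as a factor; the Hankel interpretation of the last row of $M$ is the conceptual key, but careful sign-tracking is needed, and one must augment the single minor computation with a second one to cover the degenerate locus $q=rs$.
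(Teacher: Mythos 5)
Your setup coincides with the paper's: substitute the parametrisation into a general quadric, use the coefficients of $t^8,\dots,t^5$ to eliminate four of the ten unknowns, and characterise the first species by the condition that the resulting $5\times 6$ system has rank at most $4$, i.e.\ that all six $5\times 5$ minors vanish. The paper then finishes both directions simultaneously: each of the six minors is $\cat(g)$ times one of the factors $q^2-pr$, $qr-ps$, $qs-p$, $r^2-2qs+p$, $rs-q$, $s^2-r$, and these six factors cannot all vanish without forcing $r=s^2$, $q=s^3$, $p=s^4$, which \eqref{eqn:param} excludes; hence all minors vanish if and only if $\cat(g)=0$. This makes your Sylvester-normalisation argument for the direction $\cat(g)=0\Rightarrow$ first species unnecessary (and that route carries its own bookkeeping: the linear fractional transformation must be checked to preserve the normal form \eqref{eqn:param}, and over $\FF=\RR$ one must pass through the complexification, noting that the rank of the real system is unchanged).

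The genuine gap is in your converse direction: coprimality of two cofactors is not the right criterion. Two coprime polynomials in the four variables $p,q,r,s$ still share a two-dimensional zero locus, and your deduction ``both minors vanish $\Rightarrow\cat(g)=0$'' requires $\cat(g)$ to vanish on the non-degenerate part of that common locus. This fails for some coprime pairs. Concretely, $rs-q$ and $qs-p$ are coprime, yet at $(p,q,r,s)=(0,0,1,0)$ both vanish, the curve $[t^4,t^3,t^2-1,t]$ is a genuine space quartic satisfying $r\ne s^2$, and $\cat(g)=-r^3=-1\ne 0$ (indeed one checks directly that $x_0x_2-x_1^2+x_1x_3=0$ is the unique quadric through it, so it is of the second species while your two chosen minors both vanish). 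So the argument as stated can draw a false conclusion depending on which second minor you pick. The fix is either to use a pair whose cofactors' common zero locus forces $\cat(g)=0$ --- e.g.\ $rs-q$ and $s^2-r$, whose simultaneous vanishing gives $r=s^2$, $q=s^3$ and hence $\cat(g)=ps^2-s^6-ps^2+2s^6-s^6=0$ identically --- or, as the paper does, to use all six minors and observe that the six cofactors cannot vanish simultaneously under the non-degeneracy of \eqref{eqn:param}.
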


\begin{proof}
We prove this by considering the equations of quadrics $Q$ that contain $\delta$.
If $Q$ contains $\delta$, substituting the four polynomials $t^4-p, t^3+q, t^2-r, t+s$ for the homogeneous coordinates of $x$ into the equation $x^T A_Q x = 0$ gives a degree $8$ polynomial in $t$ that has to be identically zero.

This gives nine equations in ten variables (the entries of the symmetric $4\times 4$ matrix $A_Q=(a_{i,j})$). 
The first few equations, corresponding to the coefficients of $t^8, t^7, t^6, t^5$, are $a_{11}=0, a_{12}=0, 2a_{13}+a_{22}=0, a_{14}+a_{23}=0$.
So in fact we only have five equations in six variables:
\begin{equation*}
\begin{pmatrix}
-2r & 2s & 2 & 1 & 0 &0 \\
-2q & r & s & 0 & 1 & 0 \\
-2p & -2q & 0 & -2r & 2s & 1 \\
0 & -p & q & 0 & -r & s \\
2(pr-q^2) & -2(ps-qr) & 2qs & r^2 & -2rs & s^2
\end{pmatrix}
\begin{pmatrix}
a_{13}\\a_{14}\\a_{24}\\a_{33}\\a_{34}\\a_{44}
\end{pmatrix}
= 0.
\end{equation*}
There is always a non-trivial solution to this system, but we want to show that there are always at least two linearly independent solutions if and only if $\cat(g)=pr - q^2 - ps^2 + 2qrs - r^3 = 0$.

The nullity of the matrix is at least $2$ if and only if its rank is at most $4$, which in turn happens if and only if the six $5 \times 5$ minors all vanish. 
These six minors are
\begin{align*}
-4\cat(g)&(q^2 - pr), & 2\cat(g)&(qr - ps), & -4\cat(g)&(qs - p),\\
2\cat(g)&(r^2 - 2qs + p), & 2\cat(g)&(rs - q), & -2\cat(g)&(s^2-r),
\end{align*}
and it is impossible for all of the last factors to be equal to zero, otherwise we have $r = s^2$, $q = s^3$, and $p = s^4$.
Thus the six minors all vanish if and only if $\cat(g) = 0$, as desired.
\end{proof}

\subsection{Groups on space quartics}\label{ssec:groups}

The extremal configurations in Theorem~\ref{thm:strong} are based on group laws on certain space quartics of the first species.
We include here the reducible quartic consisting of two disjoint conic sections of a quadric, which can be viewed as the intersection of a quadric with the union of two planes (which is a degenerate quadric).
Geometrically, the group laws relate to when four points on the curve are coplanar.

A smooth space quartic of the first species in $\RR\PP^3$ is an elliptic normal curve as described in \cite{M10}*{Section~4.4.5}.
There is a group law on the points of the quartic such that four points, counting multiplicity, are coplanar if and only if they sum to the identity (see for example~\cite{S09}*{Exercise~3.10}). 
Analogous to real elliptic planar cubics, a real elliptic normal curve has one or two real connected components.
If there is one connected component, this group is isomorphic to the circle $\RR/\ZZ$; if there are two connected components, it is isomorphic to $\RR/\ZZ \times \ZZ_2$~\cite{M10}*{Section~4.4.5}.

A space quartic of the first species that is not smooth is rational with a single singular point~\cite{M10}*{Section~4.4.6}, which can be either a cusp, a crunode, or an acnode.
As discussed in \cite{M10}*{Section~4.4.6}, we can define a group law on the smooth points on such quartics.
The groups obtained are completely analogous to the groups on the smooth points of singular planar cubics.
In the case of a cuspidal space quartic, the group is isomorphic to $(\RR,+)$; for a space quartic with a crunode, it is isomorphic to the non-zero real numbers $(\RR^*,\cdot)\cong\RR \times \ZZ_2$; and an acnodal space quartic has group isomorphic to the circle group $\RR/\ZZ$.
Geometrically, in the cuspidal and acnodal cases, four smooth points, counting multiplicity, are coplanar if and only if they sum to the identity;
in the crunodal case, four smooth points, counting multiplicity, are coplanar if and only if they sum to $(0,0)$ or $(0,1)$, depending on the curve.

Lastly, given two disjoint conic sections of a quadric, we can apply a projective transformation to obtain two circles that lie on a sphere. 
Four points are coplanar if and only if they are concyclic on the sphere.
We can then define a group law on the union of the two circles completely analogous to the two concentric circles case in~\cite{LMMSSZ17}*{Section~3.2}.
Two points on each circle, counting multiplicity, are coplanar if and only if they sum to the identity, and in this case the group is isomorphic to $\RR/\ZZ \times \ZZ_2$.

We also need to know the groups on a space quartic of the first species over the complex numbers, and here the situation is analogous to complex singular planar cubics.
The singular point is either a cusp or a node (a crunode and an acnode being indistinguishable over $\CC$).
For a complex cuspidal space quartic, the group is isomorphic to $(\CC,+)$, and for a complex space quartic with a node, it is isomorphic to the non-zero complex numbers $(\CC^*,\cdot)$.

\section{Ordinary planes}\label{sec:proof}

We prove Theorem~\ref{thm:strong} in this section.
First, in Section~\ref{ssec:intermediate}, we prove the weaker Lemma~\ref{lem:intermediate}, using results from Section~\ref{sec:tools} together with a result of Green and Tao's.
This provides an alternative to Ball's tetra-grid.
We then refine Lemma~\ref{lem:intermediate} in Section~\ref{ssec:proof}, replacing the polynomial error terms by linear error terms in Lemma~\ref{lem:intermediate2}.
Finally, using the properties of space quartics from Section~\ref{sec:quartics}, we determine the precise characterisation of the possible configurations of sets with few ordinary planes as described in Theorem~\ref{thm:strong}.

\subsection{Intermediate structure theorem}\label{ssec:intermediate}

We start with the following statement of Green and Tao's intermediate structure theorem for sets with few ordinary lines~\cite{GT13}.

\begin{theorem}[Green--Tao \cite{GT13}]\label{thm:GT}
Let $P$ be a set of $n$ points in $\RR\PP^2$, spanning at most $Kn$ ordinary lines, for some $K \ge 1$.
Then we have one of the following:
\begin{enumerate}[label=\rom]
\item $P$ is contained in the union of $O(K)$ lines and an additional $O(K^6)$ points;
\item $P$ lies on the union of an irreducible conic $\sigma$ and an additional $O(K^4)$ lines, with $|P \cap \sigma| = \frac{n}{2} \pm O(K^5)$;
\item $P$ is contained in the union of an irreducible cubic and an additional $O(K^5)$ points.
\end{enumerate}
\end{theorem}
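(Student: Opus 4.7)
The plan is to follow the multi-step strategy pioneered by Green and Tao in their work on the Dirac--Motzkin conjecture. The hypothesis of at most $Kn$ ordinary lines places $P$ close to a combinatorial extremum: by simple counting, $P$ determines at least $\frac{1}{3}\binom{n}{2}-O(Kn)$ collinear triples, so almost every pair of points has a third collinear partner in $P$. As a first step I would find by averaging a point $p\in P$ lying on at most $O(K)$ ordinary lines of $P$, and project from $p$ to pass to a one-dimensional configuration: the points of $P\setminus\{p\}$ project to a set on $\RR\PP^1$ in which almost every pair of distinct points has a partner completing a $3$-rich line through $p$.

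Identifying $\RR\PP^1\cong\RR\cup\{\infty\}$, these three-term collinearities translate into many additive relations on the projected set $A\subseteq\RR$, of the form $a+b+c=\text{const}$ (up to a M\"obius change of coordinates). The heart of the argument is to invoke a Balog--Szemer\'edi--Gowers type statement to extract a large subset $A'\subseteq A$ with small doubling $|A'+A'|=O(K^{O(1)}|A'|)$, and then apply Freiman's theorem in $\RR$ to place $A'$ inside a short arithmetic progression. This lifts back to the statement that a large subset of $P$ is contained in a planar algebraic curve of degree at most three. The trichotomy in the theorem now corresponds to the three possible shapes of such a cubic: three lines (case \itm{i}), an irreducible conic together with a line (case \itm{ii}), or an irreducible cubic (case \itm{iii}). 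The group law on each of these cubics controls collinearity, in that three smooth points are collinear if and only if they sum to the identity, which is precisely what allows the additive structure on $A'$ to be translated back into the geometric structure on $P$. In case \itm{ii}, the near-equality $|P\cap\sigma|=n/2\pm O(K^5)$ would follow from a parity argument on the line: a line meets the conic in $0$, $1$, or $2$ points, and counting $3$-rich lines that use one conic point and two off-conic points (and vice versa) forces the split to be balanced up to the claimed error.

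The main obstacle is obtaining the precise polynomial dependencies $O(K^6)$, $O(K^4)$, $O(K^5)$, $O(K^5)$ in the error terms. Each application of Balog--Szemer\'edi--Gowers and Freiman in $\RR$ loses polynomial factors, the choice of projection point $p$ introduces further losses, and the transfer from the one-dimensional additive conclusion back to the two-dimensional geometric one must be controlled quantitatively. Avoiding a blow-up of the form $K^{O(\log K)}$ or worse requires Green and Tao's explicit quantitative versions of these additive-combinatorial tools (in particular their work on approximate subgroups of $\RR$), combined with an iterative peeling argument that extracts the structured part of $P$ while reducing the ordinary-line count at each step. This quantitative bookkeeping, rather than any individual algebraic ingredient, is the technically heaviest part of the proof.
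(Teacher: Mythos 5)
First, note that the paper you are reading does not prove this statement at all: it is quoted verbatim as an external black box from Green and Tao \cite{GT13} (their intermediate structure theorem), so there is no internal proof to compare against. Judged on its own terms, however, your proposal has a genuine gap at its central step. Projecting $P\setminus\{p\}$ from a point $p\in P$ onto $\RR\PP^1$ does not convert collinear triples of $P$ into three-term additive relations $a+b+c=\mathrm{const}$ on the projected set. A $3$-rich line \emph{through} $p$ collapses to a single point of $\RR\PP^1$ of multiplicity two (it records nothing pairwise), while a collinear triple \emph{avoiding} $p$ projects to three points of $\RR\PP^1$ with no additive relation among their coordinates in any M\"obius chart. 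The additive structure you want to feed into Balog--Szemer\'edi--Gowers and Freiman only exists once you already know the points lie on a cubic curve and can use its group law; your argument assumes the conclusion it is trying to reach.

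The actual route in \cite{GT13} is quite different and essentially non-additive at this stage. One passes to the dual line arrangement and applies Melchior's inequality (Euler's formula for the projective plane) to show that few ordinary lines force the dual arrangement to be almost entirely triangular; the resulting locally triangular grids are then fitted with cubic curves via the Cayley--Bacharach/Chasles theorem, and a patching argument globalises these local cubics into the trichotomy of lines, conic plus lines, or irreducible cubic, with the stated polynomial error terms coming from this combinatorial bookkeeping. Balog--Szemer\'edi--Gowers and Freiman-type results for locally compact abelian groups enter only in Green and Tao's \emph{full} structure theorem (identifying the point set as a near-coset of a subgroup of the cubic), which is a later and separate step, invoked in this paper as Lemmas~\ref{lem:7.4} and~\ref{lem:7.2}. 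The balanced split $|P\cap\sigma|=n/2\pm O(K^5)$ in case \itm{ii} likewise comes out of the triangular-grid analysis rather than a parity count of line--conic incidences.
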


With the results in Section~\ref{sec:tools} and Theorem~\ref{thm:GT}, we can now prove the following intermediate structure theorem for ordinary planes.

\begin{lemma}[Intermediate structure theorem]\label{lem:intermediate}
Let $K\ge 1$ and suppose $n \ge CK^8$ for some sufficiently large constant $C>0$.
Let $P$ be a set of $n$ points in $\RR\PP^3$ with no three collinear.
If $P$ spans at most $Kn^2$ ordinary planes,
then we have one of the following:
\begin{enumerate}[label=\rom]
\item $P$ is contained in the union of a plane and an additional $O(K^6)$ points;\label{case:1.1}
\item $P$ is contained in the union of two irreducible conics lying on distinct planes and an additional $O(K^8)$ points, with each conic containing $\frac{n}{2} \pm O(K^8)$ points of $P$;\label{case:1.2}
\item $P$ is contained in the union of a space quartic curve and an additional $O(K^5)$ points.\label{case:1.3}
\end{enumerate}
\end{lemma}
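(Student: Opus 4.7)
The plan is to find a point $p \in P$ with few ordinary planes through it, project from $p$ to a plane, apply the Green--Tao intermediate structure theorem (Theorem~\ref{thm:GT}) to the resulting planar point set, and then lift each planar outcome back to one of the three three-dimensional cases of the lemma. By averaging---each ordinary plane contains three points, so $\sum_{p\in P}(\text{number of ordinary planes through }p) \le 3Kn^2$---some $p \in P$ lies on at most $3Kn$ ordinary planes; fix such a $p$ and let $\pi_p$ be projection from $p$ onto a plane $\Pi_0 \subset \RR\PP^3$ not containing $p$. Because no three points of $P$ are collinear, $\pi_p$ is injective on $P\setminus\{p\}$, and each ordinary plane of $P$ through $p$ maps to an ordinary line of $P':=\pi_p(P\setminus\{p\})$. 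Thus $P'$ is an $(n-1)$-point set in $\RR\PP^2$ with at most $3Kn$ ordinary lines, and Theorem~\ref{thm:GT} applies with effective parameter $O(K)$. Under $\pi_p^{-1}$, lines lift to planes through $p$, irreducible conics lift to quadric cones with vertex $p$, and irreducible cubics lift to irreducible cubic cones with vertex $p$ (Section~\ref{ssec:projections}).

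In Green--Tao case \itm{i}, $P\setminus\{p\}$ lies on $O(K)$ planes through $p$ plus $O(K^6)$ extras. A counting argument (selecting one point from each of two high-multiplicity planes together with a third generic point) shows that two planes each carrying $\Omega(n/K)$ points would produce $\Omega(n^3/K^3) \gg Kn^2$ ordinary planes once $n \ge CK^8$; hence a single plane must absorb all but $O(K^6)$ points of $P$, giving lemma case \itm{i}. In Green--Tao case \itm{ii}, $P\setminus\{p\}$ lies on a quadric cone $C = C_p(\sigma)$ carrying $n/2 \pm O(K^5)$ points, together with $O(K^4)$ planes through $p$ that carry the remaining $\approx n/2$; the same counting argument pushes the off-cone points onto a single plane $\Pi$, and applying a planar Green--Tao argument inside $\Pi$ concentrates them on an irreducible conic $\sigma_2 \subset \Pi$. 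A symmetric argument, projecting again from a point of $\sigma_2 \cap P$, forces the $n/2$ points on $C$ onto a single plane section $\sigma_1 = C \cap \Pi'$ with $\Pi' \ne \Pi$. This is lemma case \itm{ii}, with $O(K^8)$ exceptional points.

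Green--Tao case \itm{iii} is the crux. Here $P\setminus\{p\}$ lies on an irreducible cubic cone $C_1 := C_p(\gamma)$ plus $O(K^5)$ extras, but a cubic cone is a surface rather than a curve. Averaging over the $\ge n - O(K^5)$ points of $P$ on $C_1$ produces $p' \in C_1 \cap P$ with at most $O(Kn)$ ordinary planes through it; I project from $p'$ and apply Theorem~\ref{thm:GT} once more. Sub-cases \itm{i} and \itm{ii} for the $p'$-projection either reduce to lemma cases \itm{i} and \itm{ii} directly, or contradict the standing $C_1$-structure via the same counting argument, so we may assume a second irreducible cubic cone $C_2 := C_{p'}(\gamma')$. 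Since $C_1$ and $C_2$ have distinct vertices they are distinct surfaces, so B\'ezout (Corollary~\ref{cor:Bezout1}) gives $C_1 \cap C_2$ as a curve of total degree at most $9$ containing the line $\overline{pp'}$ as a component (because $p \in C_2$ and $p' \in C_1$). All but $O(K^5)$ points of $P$ lie on $C_1 \cap C_2$, so at least one irreducible component $\delta$ must absorb $\Omega(n)$ points. The projection-degree formula of Section~\ref{ssec:projections}, applied to $\pi_p|_\delta$ whose image is the cubic $\gamma$, then forces $\deg \delta = 4$ and $p \in \delta$, so $\delta$ is a non-planar degree-$4$ curve, that is, a space quartic. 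This is lemma case \itm{iii}.

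The main obstacle is precisely this last case: the residual curve $C_1 \cap C_2 \setminus \overline{pp'}$ can have degree as large as $8$, and the real work is to single out one irreducible non-planar degree-$4$ component containing all but $O(K^5)$ points of $P$, while showing that every other component is either planar or carries only $O(K^5)$ points. The quantitative projection and trisecant tools developed in Section~\ref{ssec:projections}---above all Lemma~\ref{lem:projection2}, Lemma~\ref{lem:projection3}, and Segre's Proposition~\ref{prop:trisecant}---are the key technical instruments that drive this extraction.
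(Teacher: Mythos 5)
Your overall strategy --- project from points with few ordinary planes through them, apply Theorem~\ref{thm:GT}, and lift the planar structure back to $\RR\PP^3$ --- is exactly the paper's. The gaps are in the lifting. The most serious is the ``counting argument'' you lean on in cases \itm{i} and \itm{ii}: the claim that two planes each carrying $\Omega(n/K)$ points of $P$ force $\Omega(n^3/K^3)$ ordinary planes is unjustified and, as stated, false. A plane through one point of each of two such planes and a third point has \emph{at least} three points of $P$, but nothing stops it from having a fourth (in the prism, two planes each carry $n/2$ points, yet there are $\Theta(n^3)$ four-point planes and only $O(n^2)$ ordinary ones). Even the careful count --- fix $q$ off a plane with $a$ points, take the $\binom{a}{2}$ planes through $q$ and a pair, subtract at most $a/2$ spoiled planes per exterior point --- yields $\binom{a}{2}-(n-a)a/2$, which is \emph{negative} when $a=\Theta(n/K)$. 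The paper sidesteps this entirely: since at least $2n/3$ of the points have at most $9Kn$ ordinary planes through them, it partitions these into $P_1',P_2',P_3'$ by which Green--Tao case their projection lands in, and in case \itm{i} chooses \emph{three} non-collinear projection points in $P_1'$; a triple intersection of one plane through each is a line (so $\le 2$ points of $P$) unless all three coincide with the plane $p_1p_2p_3$. In case \itm{ii} the analogous intersection of three quadric cones, together with Lemma~\ref{lem:projection3} (to exclude twisted-cubic and quartic components of $C_1\cap C_2\cap C_3$ carrying many points) and Proposition~\ref{prop:cones} (at most two quadric cones contain two given conics), is what forces exactly two conics with $n/2\pm O(K^8)$ points each. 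Your alternative of ``applying a planar Green--Tao argument inside $\Pi$'' is not available: ordinary lines of $P\cap\Pi$ within $\Pi$ bear no direct relation to ordinary planes of $P$.

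In case \itm{iii} you have the right skeleton (two cubic cones, B\'ezout, a degree-$9$ intersection curve, extraction of a quartic component via the projection machinery), but you explicitly defer what you yourself call ``the real work''. The paper does it as follows: take the component $\delta_1$ maximizing $|P_3'\cap\delta_1|$, so $|P_3'\cap\delta_1|=\Omega(n)$; projecting from any $q\in P_3'\cap\delta_1$ gives an irreducible cubic, so $\delta_1$ is non-planar; by Lemma~\ref{lem:projection1} (the version for projection points \emph{on} the curve, not Lemma~\ref{lem:projection3}) one can pick $q'\in P_3'\cap\delta_1$ with $\pi_{q'}$ generically one-to-one, whence $\deg\delta_1=3+1=4$; and any second component with more than $O(K^5)$ points would lie in $C_q(\delta_1)$ for every $q\in P_3'\cap\delta_1$, contradicting Lemma~\ref{lem:projection2}. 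Finally, your plan of projecting from one point first and then reconciling possibly different Green--Tao cases at a second projection point creates avoidable case friction; the $P_1'\cup P_2'\cup P_3'$ partition, followed by a split on which class is large, guarantees a supply of projection points all yielding the same planar structure.
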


\begin{proof}
Let $P'$ denote the set of all points $p \in P$ such that there are at most $9Kn$ ordinary planes through $p$. 
Then $|P'| \ge 2n/3$, and for any $p \in P'$, the projection $\pi_p(P \setminus \{p\})$ spans at most $9Kn$ ordinary lines. 
Applying
Theorem~\ref{thm:GT} to $\pi_p(P\setminus\{p\})$ for any $p \in P'$, we have one of three cases:
\begin{enumerate}
\item $\pi_p(P\setminus\{p\})$ is contained in the union of $O(K)$ lines and an additional $O(K^6)$ points;
\item $\pi_p(P\setminus\{p\})$ lies on the union of a conic $\sigma$ and an additional $O(K^4)$ lines with $|\pi_p(P\setminus\{p\}) \cap \sigma| = n/2 \pm O(K^5)$;
\item $\pi_p(P\setminus\{p\})$ is contained in the union of an irreducible cubic and an additional $O(K^5)$ points.
\end{enumerate}
This partitions $P'$ into a disjoint union $P_1' \cup P_2' \cup P_3'$, depending on which of the above cases we obtain.

If $|P_1'| \ge 3$, let $p_1, p_2, p_3$ be three distinct points in $P_1'$. 
Then apart from $O(K^6)$ points, $P$ is contained in the intersection of the union of $O(K)$ planes through $p_1$, the union of $O(K)$ planes through $p_2$, and the union of $O(K)$ planes through $p_3$.

Since $p_1, p_2, p_3$ are not collinear, if $\Pi_i$ is a plane through $p_i$, then $P \cap \Pi_1 \cap \Pi_2 \cap \Pi_3$ is contained in a line, which contains at most two points of $P$ except when $\Pi_1 = \Pi_2 = \Pi_3$ is the plane through $p_1, p_2, p_3$.
Thus we have $P$ lying in a plane except for at most $O(K^6) + O(K^2) = O(K^6)$ points, giving Case~\ref{case:1.1}.

Next suppose $|P_2'| \ge 3n/5$, and let $p_1, p_2, p_3$ be three distinct points in $P_2'$. 
Then for each $i = 1, 2, 3$, there exist a quadric cone $C_i$ with vertex $p_i$ and planes $\{\Pi_{i,j} : j \in J_i\}$ through $p_i$ with $|J_i| = O(K^4)$,
such that $P \subset C_i \cup \bigcup_{j \in J_i} \Pi_{i,j}$ with $|P \cap C_i| = n/2 \pm O(K^5)$.
So all but at most $O(K^8)$ points of $P$ lie either on the intersection $C_1 \cap C_2 \cap C_3$, one of the $O(K^4)$ conics $C_i \cap \Pi_{i',j}$ for $i \ne i'$, or the plane $\Pi$ through $p_1, p_2, p_3$.

It is well known (and easy to deduce from B\'ezout's theorem (Corollary~\ref{cor:Bezout1})) that the intersection of two quadrics is either an irreducible space quartic, a twisted cubic and a line, or conics and lines.
We claim that any component $\delta$ of the intersection $C_1 \cap C_2 \cap C_3$ that is a twisted cubic or a space quartic cannot contain more than $O(K^4)$ points of $P$.
Choose a point $p \in P_2' \setminus (C_1 \cap C_2 \cap C_3)$ such that the projection $\pi_p$ restricted to $\delta$ is generically one-to-one.
Such a $p$ exists since $|P_2' \setminus (C_1 \cap C_2 \cap C_3)| \ge 3n/5 - (n/2 + O(K^5))$ and by Lemma~\ref{lem:projection3} there are only $O(1)$ exceptional points.
Then $\pi_p(\delta)$ is an irreducible planar cubic or quartic containing more than $O(K^4)$ points of $P$, contradicting $p \in P_2'$.
So the components of $C_1 \cap C_2 \cap C_3$ which contain more than $O(K^4)$ points of $P$ must all be conics.

No plane $\Pi'$ can contain more than $n/2 + O(K^5)$ points of $P$, otherwise projecting from $p' \in P_2' \cap \Pi'$ would give a line containing more than $n/2 + O(K^5)$ points in the projection, contradicting $p' \in P_2'$.
So choose a fourth point $p_4 \in P_2' \setminus \Pi$.
As before, $P$ is contained in the union of a quadric cone $C_4$ with vertex $p_4$ and $O(K^4)$ planes through $p_4$.
Since $p_4 \notin \Pi$, if $\Pi$ contains more than $O(K^4)$ points of $P$, all but at most $O(K^4)$ points of $P \cap \Pi$ must lie on the conic $C_4 \cap \Pi$.
We then have that all but at most $O(K^8)$ points of $P$ lie on $O(K^4)$ conics, and without loss of generality we can assume each conic contains more than $O(K^4)$ points.
Let $\Sigma$ be the set of such conics.
(Note that the same argument shows that if a plane $\Pi'$ contains a conic $\sigma \in \Sigma$, then all but at most $O(K^4)$ points of $P \cap \Pi'$ lie on $\sigma$.)
We show that $|\Sigma| = 2$, and that for each $\sigma \in \Sigma$, $|P \cap \sigma| = n/2 \pm O(K^8)$, thus giving Case~\ref{case:1.2}.

Let $\sigma_1$ be the conic in $\Sigma$ with the most points of $P$, and let $\Pi_1$ be the plane in which $\sigma_1$ lies.
Since no plane contains more than $n/2 + O(K^5)$ points of $P$, we have that $|P \setminus \Pi_1| \ge n/2 - O(K^5)$.
Let $\sigma_2$ be the conic in $\Sigma_1 \setminus \{\sigma_1\}$ with the most points of $P \setminus \Pi_1$, and let $\Pi_2$ be the plane in which $\sigma_2$ lies.
Then $|P \cap \sigma_1| \ge |P \cap \sigma_2| \ge \Omega(n/K^4)$.
Note that $\Pi_1\neq\Pi_2$, as no plane contains more than $n/2+O(K^5)$ points of $P$.
Suppose there exists $q \in P_2' \setminus (\Pi_1 \cup \Pi_2)$, so that (by B\'ezout's theorem (Corollary~\ref{cor:Bezout2})) $\sigma_1$ and $\sigma_2$ must both lie on the same quadric cone $C$ with vertex $q$.
Since $\sigma_1 \cup \sigma_2$ is the intersection of $\Pi_1 \cup \Pi_2$ and $C$, there can only be at most two such points by Proposition~\ref{prop:cones}.
Therefore, all but at most $O(K^4)$ points of $P_2'$ are contained in $\sigma_1 \cup \sigma_2$.

Without loss of generality, suppose $|P_2' \cap \sigma_1| \ge 3n/10 - O(K^4) = \Omega(n)$.
By Proposition~\ref{prop:cones} again, there exists at most two points in $P_2' \cap \sigma_1$ such that their quadric cones intersect in $\sigma_2$ and $\sigma'$ for some $\sigma' \in \Sigma \setminus \{\sigma_1, \sigma_2\}$.
We can then choose a $q' \in P_2' \cap \sigma_1$ such that the only conic in $\Sigma$ the quadric cone with vertex $q'$ contains is $\sigma_2$.
In particular, this means that $|P \cap \sigma_2| = n/2 \pm O(K^8)$.
But then we also have $|P_2' \cap \sigma_2| = \Omega(n)$.
Repeating the argument on $\sigma_2$ shows that $|P \cap \sigma_1| = n/2 \pm O(K^8)$ as well.

The remaining case is when $|P_3'| > 2n/3 - 3n/5 - 3 = \Omega(n)$.
Let $p$ and $p'$ be two distinct points in $P_3'$.
Then apart from $O(K^5)$ points, we have $P$ lying mostly on the intersection $\delta$ of two cubic cones, which is a curve with irreducible components $\delta_i$ of total degree $9$ by B\'ezout's theorem (Corollary~\ref{cor:Bezout1}).

Let $\delta_1$ be a component for which $|P_3' \cap \delta_i|$ is maximal.
Then $|P_3' \cap \delta_1| = \Omega(n)$.
Projecting from any $q \in P_3' \cap \delta_1$, we get that $\overline{\pi_q(\delta_1 \setminus \{q\})}$ is an irreducible cubic, and so $\delta_1$ must be non-planar.
By Lemma~\ref{lem:projection1}, all but $O(1)$ points $q'$ on $\delta_1$ are such that the projection $\pi_{q'}$ restricted to $\delta_1 \setminus \{q'\}$ is generically one-to-one.
We can thus choose such a $q' \in P_3' \cap \delta_1$ so that $\pi_{q'}$ projects $\delta_1 \setminus \{p_1\}$ generically one-to-one onto an irreducible cubic.
The component $\delta_1$ must then be a space quartic.

Now suppose there exists a component $\delta_2$ containing more than $O(K^5)$ points of $P$.
For any $q \in P_3' \cap \delta_1$, the cone $C_q(\delta_1)$ over $\delta_1$ has to contain $\delta_2$ by B\'ezout's theorem (Corollary~\ref{cor:Bezout2}).
Since $\delta_1$ is non-planar, this contradicts Lemma~\ref{lem:projection2}.
So we have all but at most $O(K^5)$ points of $P$ lying on a single space quartic, giving Case~\ref{case:1.3}.
\end{proof}

\subsection{Proof of the full structure theorem}\label{ssec:proof}

To get a more precise description of the structure of sets spanning few ordinary planes, we need a more precise description of sets that lie on certain cubic curves and span few ordinary lines.
The following two lemmas are~\cite{GT13}*{Lemma~7.4} and~\cite{GT13}*{Lemma~7.2}, respectively.

\begin{lemma}[Green--Tao \cite{GT13}]\label{lem:7.4}
Let $P$ be a set of $n$ points in $\RR\PP^2$ spanning at most $Kn$ ordinary lines, and suppose $n \ge CK$ for some sufficiently large absolute constant $C$.
Suppose all but $K$ points of $P$ lie on the union of an irreducible conic $\sigma$ and a line $\l$, with $n/2 \pm O(K)$ points of $P$ on each of $\sigma$ and $\l$.
Then up to a projective transformation, $P$ differs in at most $O(K)$ points from the vertices of a regular $m$-gon and the $m$ points at infinity corresponding to the diagonals of the $m$-gon, for some $m = n/2 \pm O(K)$.
\end{lemma}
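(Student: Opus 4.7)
The plan is to reduce the geometric statement to a near-extremal additive-energy problem in the circle group $\RR/2\pi\ZZ$. After a suitable projective transformation, we may assume $\sigma$ is the unit circle and $\l$ is the line at infinity; parametrise $A := P \cap \sigma$ by angles $\theta \in \RR/2\pi\ZZ$, and identify $B := P \cap \l$ with a subset $B'' \subset \RR/2\pi\ZZ$ via the doubled-direction map. The key geometric fact is that a chord of the circle through angles $\theta_1, \theta_2$ has doubled direction $\theta_1 + \theta_2 \pmod{2\pi}$, so the chord meets $B$ exactly when $\theta_1 + \theta_2 \in B''$.

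Next I would count the ordinary lines arising from chords of $\sigma$. Such a chord is non-ordinary only if its direction lies in $B$ or if it passes through one of the at most $K$ exceptional points (each lying on at most $|A|$ such chords), hence
\[
\lvert \{(\theta_1, \theta_2) \in A^2 : \theta_1 + \theta_2 \in B''\} \rvert \ge |A|^2 - O(Kn).
\]
Writing $r_A(s)$ for the number of representations of $s$ as a sum of two elements of $A$ and applying Cauchy--Schwarz with $|B''| = n/2 \pm O(K) = |A| \pm O(K)$ gives
\[
E(A) := \sum_s r_A(s)^2 \ge \frac{\bigl(|A|^2 - O(Kn)\bigr)^2}{|B''|} = |A|^3 - O(Kn^2).
\]
Since $E(A) \le |A|^3$ always, with equality precisely when $A$ is a coset of a finite subgroup of $\RR/2\pi\ZZ$, this near-extremal value forces $A$ to be within $O(K)$ points of such a coset.

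To make the extraction quantitative, observe that $\sum_s r_A(s)(|A| - r_A(s)) = |A|^3 - E(A) = O(Kn^2)$ yields some $s_0 \in B''$ with $r_A(s_0) \ge |A| - O(K)$. Setting $A_0 := A \cap (s_0 - A)$ then gives $|A_0| \ge |A| - O(K)$ with the involution $a \mapsto s_0 - a$ preserving $A_0$; discarding $O(K)$ further points of $A_0$ produces a set $A_0'$ whose sumset satisfies $|A_0' + A_0'| \le |A_0'| + O(K)$. A Freiman-type theorem in the circle group---sets with this tiny doubling lie within $O(K)$ points of a coset of a finite cyclic subgroup---then yields an integer $m = |A| \pm O(K)$, and an additional rotation maps the coset onto the vertices of the standard regular $m$-gon.

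Finally, the $m$ chord-directions of a regular $m$-gon form the cyclic subgroup of order $m$ in $\RR/2\pi\ZZ$, corresponding under the doubled-direction map to its $m$ diagonal directions. Any diagonal direction missing from $B$ creates $\Omega(n)$ ordinary chords of $A$, and any element of $B$ outside this subgroup creates $\Omega(n)$ ordinary lines (each through one $A$-point and that extra element); the budget of $Kn$ ordinary lines thus forces $B$ to agree with the $m$ diagonal directions up to $O(K)$ points. Pulling back the initial projective transformation yields the claimed normal form. The principal obstacle is the quantitative Freiman step: keeping the error linear in $K$ rather than polynomial is feasible only because $E(A)$ is within $O(K/n)$ of its absolute maximum, placing us in the rigid regime where the structure theorem in $\RR/2\pi\ZZ$ can be made essentially sharp.
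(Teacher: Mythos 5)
First, a point of comparison: the paper does not prove this lemma at all — it is quoted verbatim from Green and Tao (\cite{GT13}, Lemma~7.4) — so there is no in-paper proof to measure you against, only Green and Tao's original argument. Your reconstruction follows the same broad strategy as theirs: parametrise the conic by the circle group, encode incidences with the line via the doubled-direction (chord) map, and reduce to a near-extremal additive statement in $\RR/2\pi\ZZ$. The count giving $|\{(\theta_1,\theta_2)\in A^2:\theta_1+\theta_2\in B''\}|\ge |A|^2-O(Kn)$ is sound, as is the final cleanup matching $B$ to the $m$ diagonal directions (which is essentially Lemma~\ref{cor:7.6} run in reverse).

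There are, however, two genuine gaps. The smaller one: a projective transformation takes $(\sigma,\ell)$ to (unit circle, line at infinity) only when $\ell$ meets $\sigma$ in no real point; the secant and tangent positions must first be excluded by showing they force $\gg Kn$ ordinary lines, and you skip this. The serious one is the step you yourself flag: passing from $E(A)\ge|A|^3-O(Kn^2)$ to ``discard $O(K)$ points to obtain $|A_0'+A_0'|\le|A_0'|+O(K)$''. Near-maximal additive energy does not yield this by any generic mechanism (Balog--Szemer\'edi--Gowers-type extraction loses far more than $O(K)$ points), and the symmetrisation $A_0=A\cap(s_0-A)$ only makes $A_0$ invariant under a single reflection --- it says nothing yet about the sumset. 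What is actually needed is a ``99\% structure theorem'' with linear error: if all but $O(Kn)$ of the sums from $A\times A$ land in a fixed set $B''$ of size $|A|+O(K)$, then $A$ differs in $O(K)$ points from a coset of a finite subgroup of $\RR/2\pi\ZZ$. That proposition (proved via an almost-period/covering argument using the many reflections $a\mapsto s-a$ for popular sums $s$, together with a Kneser-type input) is precisely the additive-combinatorial engine behind Green and Tao's Lemma~7.4, and it is the real mathematical content here. As written, your proof asserts its hardest step rather than proving it.
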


\begin{lemma}[Green--Tao \cite{GT13}]\label{lem:7.2}
Let $P$ be a set of $n$ points in $\RR\PP^2$ spanning at most $Kn$ ordinary lines, and suppose $n \ge CK$ for some sufficiently large absolute constant $C$.
Suppose all but $K$ points of $P$ lie on an irreducible cubic $\gamma$.
Then $P$ differs in at most $O(K)$ points from a coset of a subgroup of $\gamma^*$, the smooth points of $\gamma$.
In particular, $\gamma$ is either an elliptic curve or an acnodal cubic.
\end{lemma}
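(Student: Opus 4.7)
The plan is to translate the geometry into additive combinatorics via the group law on the smooth points of the cubic. Recall that on $\gamma^*$ there is a natural abelian group structure (with respect to any chosen identity) such that three smooth points of $\gamma$ are collinear if and only if they sum to the identity. Setting $Q := P \cap \gamma^*$, so that $|Q| \ge n - K$, each ordinary line of $P$ either involves one of the $\le K$ exceptional points or corresponds to a pair $\{a,b\} \subseteq Q$ whose ``third partner'' $-(a+b)$ does not lie in $Q$ (with negligible corrections coming from tangents and collinear triples meeting $P \setminus \gamma^*$). Thus the hypothesis of at most $Kn$ ordinary lines translates into the additive-combinatorial bound
\[ \#\{(a,b) \in Q \times Q : -(a+b) \notin Q\} = O(Kn). \]

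First I would convert this into a lower bound on the additive energy $E(Q) := \#\{(a,b,c,d) \in Q^4 : a+b = c+d\}$, showing $E(Q) \ge |Q|^3 / O(K)$, or equivalently that $|Q+Q| = O(K|Q|)$. Applying a Freiman-type theorem in the ambient abelian group $\gamma^*$, one concludes that $Q$ is contained, up to $O(K)$ exceptional points, in a coset of a finitely generated subgroup of bounded rank. Next I would use the classification of $\gamma^*$ by the singularity type of $\gamma$ (summarised in Section~\ref{sec:quartics} for quartics, but entirely analogous for cubics): a smooth cubic gives $\gamma^* \cong \RR/\ZZ$ or $\RR/\ZZ \times \ZZ_2$; an acnodal cubic gives $\gamma^* \cong \RR/\ZZ$; a crunodal cubic gives $\gamma^* \cong \RR^* \cong \RR \times \ZZ_2$; and a cuspidal cubic gives $\gamma^* \cong (\RR,+)$. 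The latter two have no finite subgroups beyond $\{\pm 1\}$ and $\{0\}$ respectively, while $Q$ must be close to a \emph{finite} coset of size comparable to $n$. This forces $\gamma$ to be elliptic or acnodal, and $Q$ to differ by $O(K)$ points from a coset of a finite cyclic subgroup of $\gamma^*$.

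The main obstacle is obtaining the sharp $O(K)$ error term rather than the $K^{O(1)}$ bound one gets from a direct application of Freiman's theorem. In the one-dimensional compact setting, a coset of size $m$ in $\RR/\ZZ$ is geometrically a regular $m$-gon of roots of unity, and one must use the rigidity of such configurations: any set with $|Q+Q| \le (1+\eps)|Q|$ in $\RR/\ZZ$ is contained in a short arithmetic progression. Combining this Freiman-in-the-circle statement with a bootstrapping argument that feeds the improved structural information back into the collinearity count on $\gamma$ should yield the precise $O(K)$ symmetric difference from a genuine coset. Pinning down this quantitative step, and handling the slightly more delicate two-component case $\gamma^* \cong \RR/\ZZ \times \ZZ_2$ where the two branches interact, is the technically hardest part and occupies the bulk of the Green--Tao argument.
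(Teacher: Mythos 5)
This lemma is not proved in the paper at all: it is quoted verbatim as \cite{GT13}*{Lemma~7.2} and used as a black box, so there is no in-paper proof to compare against. Your sketch is, in outline, the strategy Green and Tao themselves use: pass to $Q = P \cap \gamma^*$, translate ``few ordinary lines'' into the statement that for all but $O(Kn)$ pairs $(a,b) \in Q^2$ the point $-(a+b)$ again lies in $Q$, invoke a Freiman-type structure theorem in the specific groups $\RR/\ZZ$, $\RR/\ZZ \times \ZZ_2$, $\RR$, $\RR^* \cong \RR \times \ZZ_2$, and rule out the cuspidal and crunodal cases because those groups have no large finite subgroups. So the route is the right one.

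Two substantive criticisms. First, the assertion that $E(Q) \ge |Q|^3/O(K)$ is ``equivalent'' to $|Q+Q| = O(K|Q|)$ is false in general: large additive energy does not imply small doubling (one needs Balog--Szemer\'edi--Gowers, and even then only for a large subset). Fortunately the detour through energy is unnecessary here: the hypothesis directly gives $Q + Q \subseteq (-Q) \cup E$ with $|E| = O(Kn)$, hence doubling constant $1 + O(K)$, which is much stronger than what Freiman-type results need. Second, and more seriously, the entire content of the lemma is the $O(K)$ error term together with the conclusion that $Q$ is close to a coset of a \emph{finite} subgroup rather than to a coset progression; the general Freiman theorem gives neither. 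You correctly identify this as ``the technically hardest part,'' but you do not supply it, so the proposal is an accurate roadmap rather than a proof. Closing it requires the bespoke small-doubling classification in the four specific groups above (Green and Tao's appendix), plus the cleaning-up step that converts ``contained in a union of a few cosets/progressions'' into ``differs in $O(K)$ points from a single coset'' by feeding the structure back into the ordinary-line count.
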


We also need two further technical lemmas.
The following are \cite{GT13}*{Corollary~7.6} and \cite{GT13}*{Lemma~7.7}, respectively.

\begin{lemma}[Green--Tao \cite{GT13}]\label{cor:7.6}
Let $X_{2m}$ be the vertex set of a regular $m$-gon centred at the origin of the Euclidean plane, together with the $m$ points at infinity corresponding to the diagonals of the $m$-gon.
Let $p$ be a point not on the line at infinity, not the origin, and not a vertex of the $m$-gon.
Then at least $2m - O(1)$ of the $2m$ lines joining $p$ to a point of $X_{2m}$ do not pass through any further point of $X_{2m}$.
\end{lemma}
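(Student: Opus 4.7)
The plan is to classify the 3-rich lines of $X_{2m}$, reduce the corollary to a bound on the number of chords of the regular $m$-gon through $p$, and then invoke a classical concurrency estimate. Since no three vertices of a regular $m$-gon are collinear, every 3-rich line of $X_{2m}$ contains at most two vertices. Thus these lines are precisely (i) the line at infinity, containing the $m$ direction points and no vertex, and (ii) for each pair $i\ne j$, the chord $v_iv_j$, which additionally passes through the point at infinity $\infty_{i+j}$ in its direction and hence contains exactly three points of $X_{2m}$.

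A line $pq$ with $q\in X_{2m}$ passes through a further point of $X_{2m}$ if and only if $pq$ coincides with some 3-rich line of $X_{2m}$ through $p$. Since $p$ lies in the affine plane it is not on the line at infinity, so only chord lines through $p$ are relevant. Each chord of the $m$-gon through $p$ contributes exactly three values of $q$ for which $pq$ equals that chord (the two vertex endpoints and the corresponding point at infinity). Thus, if $N$ denotes the number of chords of the $m$-gon passing through $p$, then exactly $3N$ of the $2m$ lines $pq$ are non-ordinary, and it is enough to prove $N=O(1)$.

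To bound $N$, parametrise the vertices as $v_k=e^{2\pi ik/m}\in\CC$ and write $p=re^{i\theta}$ with $r>0$ (using $p\ne0$). A short calculation starting from the criterion $(p-v_i)/(v_j-v_i)\in\RR$ shows that $p$ lies on the line through $v_i$ and $v_j$ if and only if
\[ r\cos\bigl(\theta-\tfrac{\pi(i+j)}{m}\bigr)=\cos\bigl(\tfrac{\pi(i-j)}{m}\bigr). \]
By the theorem of Poonen and Rubinstein on concurrency of diagonals in a regular polygon, there is an absolute constant $C$ such that in any regular $m$-gon the number of chords passing through any point other than the centre (and the vertices) is at most $C$, uniformly in $m$. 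Since $p$ is by hypothesis neither the origin nor a vertex, $N\le C$, and hence at most $3C=O(1)$ of the lines $pq$ fail to be ordinary, giving the desired $2m-O(1)$ ordinary lines.

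The main obstacle is the uniform bound $N=O(1)$. The rigidity of the regular polygon is essential here: the centre is genuinely exceptional, lying on $m/2$ diameters for even $m$, so the hypothesis $p\ne0$ cannot be dropped. The Poonen--Rubinstein analysis rules out any other super-concurrent points via a careful case analysis of the displayed trigonometric identity, essentially showing that three or more simultaneous solutions with the same $(r,\theta)$ force $p$ to be either the origin or a vertex.
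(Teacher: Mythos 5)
First, a point of reference: the paper does not prove this lemma at all --- it is quoted verbatim from Green and Tao \cite{GT13}*{Corollary~7.6}, so the only ``proof'' in the paper is that citation. Your reduction --- classify the lines meeting $X_{2m}$ in several points, observe that each chord $v_iv_j$ through $p$ absorbs exactly three of the $2m$ lines $pq$, and reduce everything to bounding the number $N$ of chords of the $m$-gon through $p$ --- is a sensible skeleton. But it has one minor and one serious defect. The minor one: a line $pq$ is bad as soon as it contains \emph{two} points of $X_{2m}$, not three, and your claimed equivalence with $3$-rich lines overlooks the $2$-rich lines consisting of a vertex $v_i$ together with the point at infinity $\infty_{2i}$ in the tangent direction at $v_i$. (The chords through $v_i$ realise every diagonal direction except $\infty_{2i}$, so the tangent to the circumcircle at $v_i$ meets $X_{2m}$ in exactly those two points.) At most two such tangent lines pass through any $p$, so this only costs $O(1)$, but your ``if and only if'' and the count ``exactly $3N$'' are false as stated.

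The serious gap is the appeal to Poonen and Rubinstein. Their theorem concerns \emph{interior} intersection points of the diagonals regarded as segments: at most $7$ diagonals of a regular $m$-gon meet at an interior point other than the centre. Your $p$ is an arbitrary affine point; when $p$ lies outside the polygon (in particular outside the circumcircle), the relevant concurrences are of \emph{extended} chords at an exterior point, and their result, whose case analysis is carried out under the arc constraints characterising an interior crossing, does not cover this. So the single quantitative statement your whole proof rests on --- $N\le C$ uniformly in $m$ for \emph{every} non-centre, non-vertex affine point --- is not what the cited source proves, and you do not supply the missing analysis yourself: the displayed identity $r\cos(\theta-\pi(i+j)/m)=\cos(\pi(i-j)/m)$ is correct but is never actually used. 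The standard route (and, modulo presentation, Green and Tao's) is to note that the chord through $\zeta^j$ and $\zeta^k$ (with $\zeta=e^{2\pi i/m}$) passes through $p$ exactly when $p+\zeta^{j+k}\bar p=\zeta^j+\zeta^k$, and then to show directly that for finite $p\neq 0$ this equation has $O(1)$ solution pairs $\{j,k\}$; some argument of that kind (or a genuine extension of the Poonen--Rubinstein analysis to exterior concurrences) is needed to close your proof.
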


\begin{lemma}[Green--Tao \cite{GT13}]\label{lem:7.7}
Let $\gamma^*$ be an elliptic curve or the smooth points of an acnodal cubic curve.
Let $X$ be a coset of a finite subgroup of $\gamma^*$ of size $n$, where $n$ is greater than a sufficiently large absolute constant.
If $p\in\RR\PP^2\setminus\gamma^*$, then there are at least $n/1000$ lines through $p$ that pass through exactly one point in $X$.
\end{lemma}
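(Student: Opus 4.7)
The plan is to exploit the group law on $\gamma^*$ together with the degree-$3$ projection $\pi_p\colon\gamma\to\RR\PP^1$ from $p$, whose fibres are the triples on $\gamma$ collinear with $p$ (summing to zero under the group law, chosen so that three distinct collinear smooth points sum to $0$). Writing $X=g+H$ for the subgroup $H$ of order $n$, and computing for $x=g+h_1\in X$, $y=g+h_2$, and $z=-2g-h_1-h_2$, one finds that $z\in X$ iff $3g\in H$. Hence for each $x\in X$ the other two fibre points $y,z$ lie in $X$ simultaneously or not at all, giving the dichotomy: Case A ($3g\in H$) in which every line through $p$ meets $X$ in $1$ or $3$ points (apart from the $O(1)$ tangent lines from $p$ contributing to $N_2$); Case B ($3g\notin H$) in which every line through $p$ meets $X$ in $1$ or $2$ points, so $N_3=0$.

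From $n=N_1+2N_2+3N_3$ and the dichotomy, the naive pigeonhole only gives $N_1\ge 0$, so the proof requires a quantitative refinement. In Case B, the pairs counted by $2N_2$ are precisely the off-diagonal points of $C_p\cap(X\times X)$, where $C_p:=\{(a,b)\in\gamma\times\gamma:p\in\overline{ab}\}$ has bidegree $(2,2)$; near-saturation $N_2\approx n/2$ would force a near-total fixed-point-free partner involution $\phi_p$ on $X$ with $x+\phi_p(x)=-z_x$, $z_x\in\gamma\setminus X$, and a B\'ezout-type bound exploiting the coset structure of $X$ and the hypothesis $p\notin\gamma^*$ should yield $N_2\le(1-\varepsilon)n/2$ for some absolute $\varepsilon>0$, whence $N_1\ge\varepsilon n$. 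Case A is analogous, with $\Gamma_p\subset\gamma^{(3)}$ (a $\mathbb{P}^1$ parametrising collinear triples through $p$) in place of $C_p$: near-saturation $N_3\approx n/3$ would force $X$ to be almost a union of $\pi_p$-fibres, a rigid condition that the sum computation $\sum_{x\in X}x=ng+\sum_{h\in H}h$ combined with the structure of $H$ rules out except for $p$ in a $0$-dimensional exceptional set excluded by the hypothesis.

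The main obstacle is making the saturation-versus-coset dichotomy strictly quantitative: both pigeonholes saturate only under highly rigid partner structures (the involution in Case B, the fibre packing in Case A), and the task is to convert the qualitative condition $p\notin\gamma^*$ into a positive constant-factor gap from saturation. Concretely, one needs an incidence bound of the form $|C_p\cap(X\times X)\setminus\Delta|\le(1-\varepsilon)n$ (or the analogue $|\Gamma_p\cap X^{(3)}|\le(1-\varepsilon)n/3$) with a universal $\varepsilon>0$, and proving such a bound requires carefully combining the algebraic-geometric structure of the incidence varieties $C_p$ and $\Gamma_p$ with the additive structure of the finite subgroup $H\subset\gamma^*$.
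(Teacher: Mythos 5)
This lemma is not proved in the paper at all: it is quoted verbatim as Green--Tao's Lemma~7.7 and used as a black box, so the only fair comparison is with the proof in \cite{GT13}. Your setup is the correct and standard first step, and it agrees with how Green and Tao begin: project from $p$, note that a line through $p\notin\gamma$ meets $\gamma$ in at most three points, normalise the group law so that collinear triples sum to the identity, and observe that for $X=g+H$ a line containing two points of $X$ has its third intersection point in $X$ if and only if $3g\in H$; together with the $O(1)$ tangent lines from $p$ this yields exactly the dichotomy you state. The problem is that your argument stops precisely where the content of the lemma begins. The bounds you need --- $N_2\le(1-\varepsilon)n/2$ in Case B and $N_3\le(1-\varepsilon)n/3$ in Case A --- are introduced with ``should yield'' and ``one needs'' and are never established; without them the identity $n=N_1+2N_2+3N_3$ gives only $N_1\ge 0$. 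As your own closing paragraph concedes, the entire difficulty of the lemma is this quantitative gap, so what you have is a correct reduction, not a proof.

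Two specific points show the gap is not just a matter of routine detail. First, the ``B\'ezout-type bound'' you propose for Case B cannot work on its own: take an elliptic curve whose real locus has two components and let $p$ lie inside the oval. Then \emph{every} real line through $p$ meets the oval in exactly two points and the odd branch in one, so if $X$ lies on the oval you are in Case B with a genuine fixed-point-free involution of the oval induced by $p$, and nothing in the incidence geometry of $C_p$ prevents $N_2$ from being $n/2$. What must be shown is that this involution cannot (almost) preserve the coset $X$, and that is an arithmetic/equidistribution statement about $H$ inside $\RR/\ZZ$ or $\RR/\ZZ\times\ZZ_2$, which is exactly the kind of input Green and Tao use. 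Second, in Case A your claim that the exceptional set of $p$ is $0$-dimensional and ``excluded by the hypothesis'' is unsupported: the hypothesis only excludes $p\in\gamma^*$, which is a curve, so a putative bad point off $\gamma^*$ would not be excluded, and the identity $\sum_{x\in X}x=ng+\sum_{h\in H}h$ is a statement about $X$ alone and carries no information about $p$. To make this a proof you would have to supply the missing saturation bounds, which in substance means reproving Green--Tao's Lemma~7.7.
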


Using the above four lemmas, we reduce the polynomial error terms in Lemma~\ref{lem:intermediate} to linear errors $O(K)$.
Since this refinement relies only on Green and Tao's results in~\cite{GT13}*{Section 7}, our proof will be similar to Ball's proof in~\cite{B17}.

\begin{lemma}\label{lem:intermediate2}
Let $K\ge 1$ and suppose $n \ge CK^8$ for some sufficiently large constant $C>0$.
Let $P$ be a set of $n$ points in $\RR\PP^3$ with no three collinear.
If $P$ spans at most $Kn^2$ ordinary planes,
then $P$ differs in at most $O(K)$ points from one of the following:
\begin{enumerate}[label=\rom]
\item A subset of a plane;\label{case:2.1}
\item A subset of two disjoint irreducible conics lying on distinct planes, each containing $\frac{n}{2} \pm O(K)$ points;\label{case:2.2}
\item A subset of a space quartic.\label{case:2.3}
\end{enumerate}
\end{lemma}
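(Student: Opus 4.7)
The plan is to take each of the three cases of Lemma~\ref{lem:intermediate} and refine the polynomial error $O(K^c)$ to the linear error $O(K)$, by projecting from a well-chosen point and applying the sharper Green--Tao lemmas stated above. Let $P' \subseteq P$ denote the points lying on at most $9Kn$ ordinary planes of $P$; averaging gives $|P'| \ge 2n/3$, and for any $p \in P'$ the ordinary planes of $P$ through $p$ correspond to ordinary lines of $\pi_p(P \setminus \{p\})$ in $\RR\PP^2$. For Case~\ref{case:1.1}, pick $p \in P' \cap \Pi$, which exists since $|P' \cap \Pi| \ge |P'| + |P \cap \Pi| - n \ge 2n/3 - O(K^6) > 0$. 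The projection $\pi_p$ collapses $\Pi \setminus \{p\}$ onto a line $\ell$ in the target plane, while the extras $E := P \setminus \Pi$ inject bijectively off $\ell$. For each $p' \in \pi_p(E)$ and each $q \in \ell \cap \pi_p(P)$, the line $p'q$ meets $\ell$ only at $q$, so a third projected point on $p'q$ must be another extra; each of the $|E|-1$ other extras blocks at most one such line, hence $p'$ lies on at least $|P \cap \Pi| - |E| \ge n/2$ ordinary lines of the projection. Summing over extras gives a lower bound of $|E|\cdot n/4$ on the ordinary planes through $p$, and combined with $p \in P'$ this yields $|E| = O(K)$.

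For Case~\ref{case:1.2}, pick $p \in P' \cap \sigma_1$. Then $\pi_p(\sigma_1)$ is a line, $\pi_p(\sigma_2)$ is an irreducible conic, and the projected set has $n/2 \pm O(K^8)$ points on each, plus $|E|\le O(K^8)$ extras and at most $9Kn$ ordinary lines. Applying Lemma~\ref{lem:7.4} with parameter $O(K^8)$, the projection differs by $O(K^8)$ points from the vertex set $X_{2m}$ of a regular $m$-gon together with its $m$ diagonal points at infinity. By Corollary~\ref{cor:7.6}, each projected extra not at one of the $O(1)$ exceptional locations lies on at least $2m - O(K^8) - |E| \ge n/2$ ordinary lines of $\pi_p(P\setminus\{p\})$, so the same summing argument gives $|E| = O(K)$. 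Comparing the line and conic parts of this projection with an analogous projection from a point of $\sigma_2$ (via the forced $m$-gon symmetry and the new bound $|E|=O(K)$) then refines $|P \cap \sigma_i| = n/2 \pm O(K)$.

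For Case~\ref{case:1.3}, pick $p \in P' \cap \delta$ such that both $\pi_p$ is generically one-to-one on $\delta$ (Lemma~\ref{lem:projection1} excludes only $O(1)$ points) and at most $|E|/2$ extras lie on the cone $C_p(\delta)$. For the latter, recall the classical finiteness of apparent double points of a space quartic: for each $p'' \in \CC\PP^3 \setminus \delta$, only $O(1)$ points of $\delta$ lie on bisecants or tangent lines through $p''$, so averaging over $p \in P_\delta$ shows that most $p$ satisfy the condition. The image $\gamma = \pi_p(\delta)$ is then an irreducible planar cubic, and the projected set has $|P_\delta|-1$ points on $\gamma$, $|E|$ extras, and at most $9Kn$ ordinary lines. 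Lemma~\ref{lem:7.2} applied with parameter $O(K^5)$ forces $\gamma$ to be elliptic or acnodal and produces a coset $X$ of a subgroup of $\gamma^*$ differing from the projected set in $O(K^5)$ points. Lemma~\ref{lem:7.7} then guarantees that each $p' \in \RR\PP^2 \setminus \gamma^*$ lies on at least $n/1000$ lines meeting $X$ in exactly one point; since the projected set and $X$ differ by only $O(K^5)$ points and $n \ge CK^5$, each projected extra off $\gamma^*$ lies on at least $n/2000$ ordinary lines of $\pi_p(P\setminus\{p\})$. Summing over the $\ge |E|/2 - O(1)$ such extras yields $|E| = O(K)$.

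The main obstacle is Case~\ref{case:1.3}: one must simultaneously ensure that the projected cubic $\gamma$ is of the right type for Lemma~\ref{lem:7.7} to apply (handled by the conclusion of Lemma~\ref{lem:7.2}) and that the projection point $p$ is chosen so that most extras project off $\gamma$ (handled by the averaging argument exploiting the classical bound on bisecants through a generic point). Case~\ref{case:1.2} poses a milder version of the same combination via Lemma~\ref{lem:7.4} and Corollary~\ref{cor:7.6}, while Case~\ref{case:1.1} needs only an elementary counting argument within the projection.
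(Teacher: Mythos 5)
Your overall strategy is the paper's: project from a well-chosen point of $P'$ and convert the ordinary-plane count into an ordinary-line count via Lemmas~\ref{lem:7.4}, \ref{lem:7.2}, \ref{cor:7.6}, \ref{lem:7.7}. Your Case~\ref{case:1.1} argument is a correct projection-based variant of the paper's direct count, and your Case~\ref{case:1.3} argument matches the paper's in substance (your averaging step to ensure most extras miss the cone $C_p(\delta)$ replaces the paper's choice of a projection point avoiding the at most $6|P\setminus\delta|$ bad points of $\delta$; just note that your claim that ``only $O(1)$ points of $\delta$ lie on bisecants through $p''$'' holds only for all but finitely many $p''$, e.g.\ it fails at the vertex of a quadric cone containing $\delta$, so the averaging must discard the $O(1)$ exceptional external points as the paper implicitly does).

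The genuine gap is in Case~\ref{case:1.2}. You apply Corollary~\ref{cor:7.6} to ``each projected extra not at one of the $O(1)$ exceptional locations,'' but the exceptional locus of Corollary~\ref{cor:7.6} is not $O(1)$ points: it is the entire line at infinity together with the origin and all $m$ vertices. After projecting from $p\in\sigma_1$, every extra lying in the plane $\Pi_1$ lands on the line at infinity (the image of $\sigma_1$), and Lemma~\ref{lem:intermediate} allows up to $O(K^8)$ such extras; similarly extras on $C_p(\sigma_2)$ land on the conic and could land on unoccupied vertices. For all of these your summing argument yields nothing, so $|S|=O(K)$ does not follow. The paper closes this by first showing that for each extra $s$ there are at most $8$ points $q$ on the conics from which $s$ projects onto the line or the conic, and then choosing the projection point $q\in P_2'\cap(\sigma_1\cup\sigma_2)$ among $\Omega(n)\gg 8|S|$ candidates so that \emph{no} extra lands on the exceptional configuration; you need this (or an equivalent) choice of projection point. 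Separately, your final sentence asserting $|P\cap\sigma_i|=n/2\pm O(K)$ ``via the forced $m$-gon symmetry'' is not an argument: since Lemma~\ref{lem:7.4} only gives $m=n/2\pm O(K^8)$, one must also bound $|X_{2m}\setminus\pi_q(P\setminus\{q\})|$ by $O(K)$ (each point of $X_{2m}$ missing from the projection creates about $m/2$ ordinary lines), which is a step your proposal omits entirely.
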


\begin{proof}
Let $P'$, $P_1'$, $P_2'$, and $P_3'$ be as in the proof of Lemma~\ref{lem:intermediate}.

If $|P_1'| \ge 3$, we are in Case~\ref{case:1.1} of Lemma~\ref{lem:intermediate}, and all but at most $O(K^6)$ points of $P$ lie in a plane $\Pi$.
Let $k:=|P\setminus\Pi|$.
Then for a fixed point $p\in P\setminus\Pi$, there are $\binom{n-k}{2}$ planes through $p$ and two points of $P\cap\Pi$, of which at most $k-1$ are not ordinary.
Therefore, there are at least $k(\binom{n-k}{2}-k+1)$ ordinary planes.
Since this is at most $Kn^2$ and $n \ge CK^8 > k = O(K^6)$ for sufficiently large $C$, we obtain that $k=O(K)$.

If $|P_2'| \ge 3n/5$, we are in Case~\ref{case:1.2} of Lemma~\ref{lem:intermediate}, and all but at most $O(K^8)$ points of $P$ lie on the union of two conics $\sigma_1 \cup \sigma_2$.
Let $S=P\setminus(\sigma_1\cup\sigma_2)$.
Let $\Pi_i$ be the plane supporting $\sigma_i$, $i=1,2$.
For any $p\in S\cap\Pi_1$ except at most two points also on $\Pi_2$, the projection $\pi_p$ maps $\sigma_1$ to a line and $\sigma_2$ to a conic.
For any $p\in S\setminus\Pi_1$ except at most two points, the projection $\pi_p$ maps $\sigma_1$ and $\sigma_2$ to distinct conics (by Proposition~\ref{prop:cones}).
Hence, for all but at most four points $p\in S$, there are at most four points $x\in P\cap\sigma_1$ for which $\pi_p(x)\in\pi_p(\sigma_2)$ and at most four points $x\in P\cap\sigma_2$ for which $\pi_p(x)\in\pi_p(\sigma_1)$.
Therefore, for any $q\in P_2' \cap (\sigma_1 \cap \sigma_2)$ except at most $4|S|+4|S|$ points, we have that $\pi_q(S)$ is disjoint from the line and the conic onto which all but at most $O(K^8)$ points of $P$ map.
Such a $q$ exists as $|P_2'|\ge 3n/5$.
By Lemma~\ref{lem:7.4}, the set $\pi_q(P\setminus\{q\})$ differs from a set $X$ projectively equivalent to a regular $m$-gon and the $m$ points at infinity corresponding to the diagonals of the $m$-gon in at most $O(K^8)$ points, where $m=n/2\pm O(K^8)$.
By Lemma~\ref{cor:7.6}, there are at least $n/2-O(K^8)$ ordinary lines through a fixed point of $\pi_q(S)$ and a point of $\pi_q(P\setminus(S\cup\{q\}))$.
Thus, there at least $|S|(n/2-O(K^8))$ ordinary lines.
Since there are at most $9Kn$ ordinary lines and $n \ge CK^8 > |S|=O(K^8)$ for sufficiently large $C$, we obtain that $|S|=O(K)$.
The same argument shows that $|\pi_q(P\setminus\{q\})\setminus X| =O(K)$, hence $|P\cap\sigma_i|\le m+O(K)$, $i=1,2$.

It remains to show that $|X\setminus\pi_q(P\setminus\{q\})|=O(K)$.
Note that through any point $y \in X$, there are at least $m/2 - 1$ lines through $y$ and two more points of $X$.
By removing a point from $X$, we thus create at least $m/2-O(K)$ ordinary lines.
Therefore, there are at least $|X\setminus\pi_q(P\setminus\{q\})|(n/4-O(K^8))$ ordinary lines.
Since this is at most $9Kn$ and $n \ge CK^8$ for sufficiently large $C$, we obtain that $|X\setminus\pi_q(P\setminus\{q\})|=O(K)$.

Finally, if $|P_3'| > 2n/3 - 3n/5 - 3 = \Omega(n)$, we are in Case~\ref{case:1.3} of Lemma~\ref{lem:intermediate}, and all but at most $O(K^5)$ points of $P$ lie on a space quartic $\delta$.
By Lemma~\ref{lem:projection3}, the projection from all but finitely many points $p\in P\setminus\delta$ maps $\delta$ generically one-to-one onto a planar quartic, which has at most three singular points.
Thus, there are at most six points $x\in\delta$ such that $px$ intersects $\delta$ again.
Choose a point $q\in P_3' \cap \delta$ that is not one of these at most $6|P\setminus\delta|$ points.
Such a $q$ exists as $|P_3'| = \Omega(n)$.
Then, if we project from $q$, each point in $P\setminus\delta$ is projected onto a point not on the cubic $\overline{\pi_q(\delta\setminus\{q\})}$.
By Lemma~\ref{lem:7.2}, $\pi_q(P\setminus\{q\})$ differs in at most $O(K^5)$ points from a coset of a subgroup of an elliptic curve or the smooth points of an acnodal cubic.
By Lemma~\ref{lem:7.7}, there are at least $|P\setminus\delta|(n/1000-|P\setminus\delta|)$ ordinary lines.
Since this is at most $9Kn$ and $n \ge CK^8 > |P\setminus\delta|=O(K^5)$ for sufficiently large $C$, we obtain that $|P\setminus\delta|=O(K)$.
\end{proof}

We now show that if we are in Case~\ref{case:2.2} of Lemma~\ref{lem:intermediate2}, then there is a quadric containing both conics.

\begin{lemma}\label{lem:conics}
Let $K\ge 1$ and suppose $n \ge CK^8$ for some sufficiently large constant $C>0$.
Let $P$ be a set of $n$ points in $\RR\PP^3$ with no three collinear, spanning at most $Kn^2$ ordinary planes,
Suppose $P$ has $n/2 \pm O(K)$ points on each of two disjoint conics $\sigma_1$ and $\sigma_2$ lying on two distinct
planes $\Pi_1$ and $\Pi_2$, respectively.
Then there exists an irreducible quadric that contains $\sigma_1\cup\sigma_2$.
\end{lemma}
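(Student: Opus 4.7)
The plan is to show that $\sigma_1 \cap L = \sigma_2 \cap L$ as multisets in $L \cong \CC\PP^1$, where $L := \Pi_1 \cap \Pi_2$; this will suffice since a standard restriction-based dimension count shows that the vector space of quadrics in $\CC\PP^3$ containing $\sigma_1 \cup \sigma_2$ has dimension $2$ precisely when the binary quadratic forms $q_i|_L$ defining $\sigma_i \cap L$ on $L$ are proportional, and dimension $1$ otherwise. In the former case, the resulting pencil contains $\Pi_1 \cup \Pi_2$ as its unique reducible member---any reducible quadric containing two disjoint irreducible conics on distinct planes must factor as $\Pi_1 \cup \Pi_2$, since the irreducible conic $\sigma_i$ can only lie in the plane $\Pi_i$---so generic pencil members are irreducible quadrics containing $\sigma_1 \cup \sigma_2$.

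To prove $\sigma_1 \cap L = \sigma_2 \cap L$, first pick a ``good'' point $p \in P \cap \sigma_1$ with $p \notin \Pi_2$ (excluding at most two points) and with at most $9Kn$ ordinary planes of $P$ through it (excluding at most $n/3$ points). The projection $\pi_p$ maps $\sigma_1$ birationally onto a line $\ell^* = \Pi_1 \cap \Pi^*$ (for the projection plane $\Pi^*$) and maps $\sigma_2$ onto an irreducible conic $\sigma_2' \subset \Pi^*$. The restriction $\pi_p|_L$ is a projective isomorphism $L \to \ell^*$, and a short argument using $p \notin L$ gives $\pi_p(\sigma_2 \cap L) = \sigma_2' \cap \ell^*$; thus it is enough to show $\pi_p(\sigma_1 \cap L) = \sigma_2' \cap \ell^*$ as subsets of $\ell^*$.

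The image $\pi_p(P \setminus \{p\})$ has $n/2 \pm O(K)$ points on each of $\ell^*$ and $\sigma_2'$, at most $O(K)$ points off these, and at most $9Kn$ ordinary lines, so Lemma~\ref{lem:7.4} applies. After a projective transformation $T$ of $\Pi^*$, the set $T(\pi_p(P \setminus \{p\}))$ differs in at most $O(K)$ points from $X_{2m}$ (the vertices of a regular $m$-gon on a circle together with the $m$ points at infinity in the diagonal directions), where $m = n/2 \pm O(K)$; by counting, $T(\sigma_2')$ is the circle and $T(\ell^*)$ is the line at infinity, which meet in the two circular points $\{I, J\} = \{[0,1,i], [0,1,-i]\}$. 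The $m$ points $T \circ \pi_p(P \cap \sigma_1 \setminus \{p\})$ are (approximately) the diagonal-direction points on $T(\ell^*)$, cyclically permuted by the rotation subgroup of the dihedral group $D_m$, and for $m \geq 3$ the unique $D_m$-invariant two-element subset of the line at infinity is precisely $\{I, J\}$ (the complex eigenvectors of the rotation matrix).

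The main obstacle is to force $T(\pi_p(\sigma_1 \cap L))$ to equal $\{I,J\}$. The natural strategy is to show that this two-element subset is $D_m$-invariant, which then forces it to be $\{I, J\}$. The intuition is that the approximate $D_m$-symmetry of the projected set is induced by the $m$-gon structure on $P\cap\sigma_1\setminus\{p\}$, which is intrinsic to the ambient projective geometry of $\Pi_1$ viewed from $p$; the two points $\sigma_1\cap L$ should be the unique pair on $\sigma_1$ fixed by the Möbius $\ZZ/m$-action that this symmetry induces on $\sigma_1 \cong \CC\PP^1$. One rigorous route is to iterate the argument from a second good point $p' \in P \cap \sigma_1$ and use compatibility of the two $m$-gon structures (both pulled back from the same $P$) to pin down $\sigma_1 \cap L$ as the only pair of points on $\sigma_1$ behaving as fixed points for both. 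Once $\sigma_1 \cap L = \sigma_2 \cap L$ is established, the required irreducible quadric is produced by the dimension count of the first paragraph.
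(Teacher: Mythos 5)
Your reduction is sound and is in fact equivalent to the paper's: writing $L=\Pi_1\cap\Pi_2$, the condition $\sigma_1\cap L=\sigma_2\cap L$ (as divisors on $L$) is exactly what the paper arranges by normalising $\Pi_1\parallel\Pi_2$ and showing both conics are circles (so both meet the common line at infinity in the circular points), and your pencil/dimension count correctly converts this into an irreducible quadric through $\sigma_1\cup\sigma_2$. The identification of $T(\sigma_2')$ with the circle and $T(\ell^*)$ with the line at infinity in the Lemma~\ref{lem:7.4} model, and the computation $T(\pi_p(\sigma_2\cap L))=\{I,J\}$, are also fine.

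The gap is the step you yourself flag as the ``main obstacle'': you give no argument that the pair $T(\pi_p(\sigma_1\cap L))$ is invariant under the rotation group of the $m$-gon structure, and without that the conclusion $T(\pi_p(\sigma_1\cap L))=\{I,J\}$ is precisely the statement to be proved, restated. The difficulty is twofold. First, the line $L$ is determined by the position of $\Pi_2$, and nothing in the projected picture from a single $p\in\sigma_1$ ties $\sigma_1\cap L$ to the symmetry of the $m$ points on $\ell^*$. Second, the symmetry is only approximate: $\pi_p(P\cap\sigma_1\setminus\{p\})$ agrees with an exact $\ZZ/m$-orbit only up to $O(K)$ points, so there is no canonical cyclic M\"obius action (hence no canonical fixed-point pair) attached to the point set, and ``iterating from a second good point $p'$ and using compatibility'' is a direction, not a proof. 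The paper closes exactly this gap by a concrete Euclidean argument: it first projects from a point of $P'\cap\sigma_2$ to conclude via Lemma~\ref{lem:7.4} that $P\cap\sigma_1$ is, up to $O(K)$ points, a genuinely regular $m$-gon once $\sigma_1$ is normalised to a circle; then, projecting from a point of $P'\cap\sigma_1$, it matches the tangent directions of $\sigma_2$ at the vertices of its $m$-gon with the diagonal directions of the regular $m$-gon on $\sigma_1$ (these must coincide, else one creates too many ordinary lines), and the resulting isosceles condition $|ad|=|cd|$ for the tangents at consecutive vertices $a,c$ forces many points onto the axes of symmetry of $\sigma_2$, hence forces $\sigma_2$ to be a circle. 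Some mechanism of this kind, extracting exact metric information from the approximate structure, is what your proposal is missing.
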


\begin{proof}
Let $P'$ be as in the proofs of Lemmas~\ref{lem:intermediate} and~\ref{lem:intermediate2} above.
We convert the problem to one in Euclidean geometry, by identifying $\RR\PP^3$ with the Euclidean affine space $\RR^3$ together with a projective plane at infinity.
We apply a projective transformation such that the planes $\Pi_1$ and $\Pi_2$ become parallel, and such that $\sigma_1$ is a circle. 
It then suffices to show that $\sigma_2$ is a circle as well, as $\sigma_1\cup\sigma_2$ is then contained in a circular cylinder.

Choose $p_i \in P' \cap \sigma_i$, $i=1,2$,
and consider the projection $\pi_i:=\pi_{p_i}$ to be onto the plane $\Pi_{3-i}$.
Then by Lemma~\ref{lem:intermediate2}, $\pi_1$ projects all but at most $O(K)$ points of $P$ onto the line at infinity and a disjoint conic on $\Pi_2$. 
Since the conic $\sigma_2=\pi_1(\sigma_2)$ is disjoint from the line at infinity, it is an ellipse.

Now let $p_2 \in P' \cap \sigma_2$, and consider the projection $\pi_2$.
By Lemma~\ref{lem:7.4}, $\pi_2(P \setminus \{p_2\})$ differs in at most $O(K)$ points from the vertices of a regular $m$-gon and the $m$ points at infinity corresponding to the diagonals of the $m$-gon, for some $m = n/2 \pm O(K)$.
In particular, $P \cap \sigma_1$ differs in at most $O(K)$ points from a regular $m$-gon.

Therefore, $\pi_1(P \cap \sigma_1 \setminus \{p_1\})$ differs in at most $O(K)$ points from the points at infinity corresponding to the diagonals of the regular $m$-gon on $\sigma_1$,
which are also the points at infinity corresponding to the tangents to $\sigma_1$ at the vertices of the $m$-gon.
By Lemma~\ref{lem:7.4} again, $\pi_1(P \cap \sigma_2)$ differs in at most $O(K)$ points from an $m$-gon on the ellipse $\sigma_2$, projectively equivalent to a regular $m$-gon.  

It easily follows that all but at most $O(K)$ of the tangent lines to $\sigma_2$ at the vertices of the $m$-gon are ordinary lines and so all but $O(K)$ must be points at infinity of $\pi_1(P \cap \sigma_1 \setminus \{p_1\})$. 
Let $a,b,c$ be three consecutive vertices of the $m$-gon on $\sigma_2$ such that $a,b,c\in\pi_1(P \cap \sigma_2)$.
Then the point $d$ where the tangents at $a$ and $c$ intersect, forms an isosceles triangle with $a$ and $c$, and we have $|ad| = |cd|$.
But this can only happen if $d$ lies on one of the axes of symmetry of $\sigma_2$.
Since $n$ is sufficiently large depending on $K$, we can find many triples of consecutive vertices of the $m$-gon on $\sigma_2$, and we get a contradiction unless $\sigma_2$ is a circle.
\end{proof}

The following lemma shows that if we are in Case~\ref{case:2.3} of Lemma~\ref{lem:intermediate2}, then the space quartic is either elliptic or rational of the first species.

\begin{lemma}\label{lem:quartics}
Let $K\ge 1$ and supose $n\ge CK^8$ for some sufficiently large constant $C>0$.
Let $P$ be a set of $n$ points in $\RR\PP^3$ with no three collinear, spanning at most $Kn^2$ ordinary planes.
Suppose all but at most $O(K)$ points of $P$ lie on a rational space quartic $\delta$ given by the parametrisation \eqref{eqn:param}.
Then $\delta$ is of the first species.
\end{lemma}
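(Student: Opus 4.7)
The plan is to argue by contradiction. Assume $\delta$ is of the second species; I will show this forces $P$ to span $\Theta(n^3)$ ordinary planes, contradicting the hypothesis of at most $Kn^2$ when $n \ge CK^8$.

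The key input is Theorem~\ref{thm:quad}: since $\delta$ is a rational quartic of the second species and $|P \cap \delta| \ge n - O(K)$, the subset $P \cap \delta$ spans at most $O(n^{8/3})$ coplanar quadruples. (This is a forward reference to Section~\ref{sec:quad}, but there is no circularity: the proof of Theorem~\ref{thm:quad} relies only on the classification and parametrisation of quartics from Section~\ref{sec:quartics}, not on Theorem~\ref{thm:strong}.) Next I would count ordinary planes through triples of $P \cap \delta$. For any three distinct points $t_1, t_2, t_3 \in P \cap \delta$, the plane they span intersects $\delta$ in exactly four points counted with multiplicity (by Corollary~\ref{cor:Bezout2} and non-planarity of $\delta$); write $t_4 \in \delta$ for the fourth intersection, possibly equal to one of $t_1, t_2, t_3$. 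The plane fails to be ordinary precisely when either (a) $t_4 \in P \cap \delta \setminus \{t_1, t_2, t_3\}$, or (b) the plane contains some point of $P \setminus \delta$.

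Type~(a) triples correspond $4$-to-$1$ to coplanar quadruples of $P \cap \delta$, and thus total $4 \cdot O(n^{8/3}) = O(n^{8/3})$. For type~(b), for each $p \in P \setminus \delta$ (of which there are at most $O(K)$), each pair $\{t_1, t_2\} \subset P \cap \delta$ lies with $p$ in a unique plane meeting $\delta$ in at most two further points (Corollary~\ref{cor:Bezout2}), contributing at most two extensions to a coplanar triple; summing over $p$ gives at most $O(Kn^2)$ type~(b) triples. Therefore
\[
\#\{\text{ordinary planes}\} \ge \binom{|P \cap \delta|}{3} - O(n^{8/3}) - O(Kn^2) = \tfrac{n^3}{6} - O(Kn^2 + n^{8/3}),
\]
which for $n \ge CK^8$ with $C$ sufficiently large exceeds $Kn^2$, the desired contradiction. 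Applying Lemma~\ref{lem:first} then yields $\cat(g) = 0$, so $\delta$ is of the first species.

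The main obstacle is that the type~(b) bound $O(Kn^2)$ is of the same order as the hypothesis, so the argument genuinely relies on the cubic main term $n^3/6$ dominating, which in turn requires $n \gg K$ (amply supplied by $n \ge CK^8$). A self-contained alternative that avoids Theorem~\ref{thm:quad} would project from a point $p \in P \cap \delta$ to obtain a rational planar cubic, apply Lemma~\ref{lem:7.2} to force this cubic to be elliptic or acnodal (and hence, by rationality of $\delta$, acnodal), and derive a contradiction by analysing how a rational second-species quartic can project consistently to acnodal cubics from many base points.
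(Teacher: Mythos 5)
Your argument is correct, but it takes a genuinely different route from the paper. The paper proves Lemma~\ref{lem:quartics} by projecting from many points $p_\alpha\in P'\cap\delta$, applying Lemma~\ref{lem:7.2} to get that each projection is close to a coset of an acnodal cubic, locating three points $p_A,p_B,p_C$ that project to consecutive coset elements for $\Omega(n)$ choices of $\alpha$, and converting the resulting group identity via Lemma~\ref{lem:coplanar} into the polynomial identities $F(A,C,\alpha,\beta)=F(B,B,\alpha,\beta)=0$ for all $\alpha$; a linear-algebra argument with the $3\times3$ catalecticant matrix then forces $\cat(g)=0$, whence Lemma~\ref{lem:first} applies. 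You instead invoke the second half of Theorem~\ref{thm:quad} and win by a direct count of ordinary planes. Your key observations are sound: there is indeed no circularity (the proof of Theorem~\ref{thm:quad} uses only Section~\ref{sec:quartics} and Theorem~\ref{thm:ES}); every plane through three points of $P\cap\delta$ meets $\delta$ in at most one further point by Corollary~\ref{cor:Bezout2}, so the type~(a) triples are controlled by coplanar quadruples; and the type~(b) count $O(Kn^2)$ is absorbed by the main term $n^3/6$ once $n\ge CK^8$. Two points deserve explicit mention. First, the species of the real curve and of its complexification agree (both are characterised by the vanishing of the catalecticant, Lemma~\ref{lem:first}), so Theorem~\ref{thm:quad}, stated over $\CC\PP^3$, does apply to $P\cap\delta$. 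Second, and more importantly, your contradiction needs the implied constant in the $O(n^{8/3})$ bound to be absolute, since the lemma only assumes $n\ge CK^8$ for an absolute $C$; this holds because the polynomial $F$ of \eqref{eqn:coplanar} has degree $4$ independently of $p,q,r,s$, so the constant in Theorem~\ref{thm:ES} depends only on that fixed degree, but you should say so. (Your closing appeal to Lemma~\ref{lem:first} is superfluous: the contradiction already excludes the second species, and the two species are exhaustive.) As for what each approach buys: the paper's proof is self-contained within the elementary projection and group-law toolkit already assembled for Theorem~\ref{thm:strong} and isolates exactly the catalecticant identity that also drives Theorem~\ref{thm:quad}; your proof is shorter and avoids the delicate step of finding consistently consecutive coset elements, but it imports the full strength of the Raz--Sharir--De Zeeuw theorem into the proof of the structure theorem, which the authors evidently wished to avoid. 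Your sketched alternative at the end is essentially the paper's actual strategy, though carrying it out requires the catalecticant computation you would be trying to bypass.
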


\begin{proof}
Let $p_\alpha \in \delta$ be parametrised by $\alpha\in\RR$ in such a way that the point of $\delta$ not corresponding to any $\alpha\in\RR$ is not in $P$.
As before, let $P'$ denote the set of all points $p\in P\cap\delta$ with at most $9Kn$ ordinary planes through $p$.
Since $P$ spans at most $Kn^2$ ordinary planes, we have $|P'|\ge 2n/3-O(K)$.
Let $\pi_\alpha$ be the projection map from $p_\alpha$ onto a plane not containing $p_\alpha$.
By Lemma~\ref{lem:projection1}, we can choose $p_\alpha \in P'$ such that all but at most $O(K)$ points of $\pi_\alpha(P \setminus \{p_\alpha\})$ lie on a cubic curve $\gamma_\alpha$. 
By Lemma~\ref{lem:7.2}, this set differs in at most $O(K)$ points from a coset of $\gamma_\alpha$, and $\gamma_\alpha$ is acnodal (since $\delta$ is rational).
Since $n$ is sufficiently large, there exist three distinct points $p_A, p_B, p_C \in P'$ such that for $\Omega(n)$ many $p_\alpha \in P'$, the projected points $\pi_\alpha(p_A),\pi_\alpha(p_B),\pi_\alpha(p_C)$ are consecutive elements in the coset given by Lemma \ref{lem:7.2}.

Let $\oplus_\alpha$ denote the group operation on $\gamma_\alpha$ so that we have $\pi_\alpha(p_A) \oplus_\alpha \pi_\alpha(p_C) = 2\pi_\alpha(p_B)$.
By considering the geometric definition of $\oplus_\alpha$ we obtain that if $p_\beta$ is the fourth point of intersection between $\delta$ and the plane through $p_A, p_C, p_\alpha$, and $p_{\beta'}$ the fourth point of intersection between $\delta$ and the plane through $p_B, p_B, p_\alpha$ (that is, containing the tangent line of $\delta$ at $p_B$ and passing through $p_\alpha$), then $\beta = \beta'$.
Equivalently, by Lemma~\ref{lem:coplanar}, we have 
\begin{equation*}
F(A, C, \alpha, \beta) = 0 = F(B, B, \alpha, \beta).
\end{equation*}
Since $F$ is a polynomial and the above is true for sufficiently many $\alpha$ (since $n$ is sufficiently large), it holds for all $\alpha \in \RR$.

Note that
\begin{equation*}
F(A, C, \alpha, \beta) = 
\begin{pmatrix}
\alpha \beta, & \alpha + \beta, & 1
\end{pmatrix}
\begin{pmatrix}
1 & s & r\\
s & r & q\\
r & q & p\\
\end{pmatrix}
\begin{pmatrix}
AC\\
A+C\\
1\\
\end{pmatrix}
,
\end{equation*}
with a similar expression for $F(B,B,\alpha,\beta)$.
Since $\begin{pmatrix} AC, & A+C, & 1 \end{pmatrix}$ and $\begin{pmatrix} B^2, & 2B, & 1 \end{pmatrix}$ are linearly independent, the set of vectors
\begin{equation*}
\left\lbrace 
\begin{pmatrix}
\alpha \beta, & \alpha + \beta, & 1
\end{pmatrix}
\begin{pmatrix}
1 & s & r\\
s & r & q\\
r & q & p\\
\end{pmatrix}
: \alpha \in \RR
\right\rbrace
\end{equation*}
lie in a $1$-dimensional linear subspace of $\RR^3$.
If the catalecticant of the fundamental quartic of $\delta$, which is the determinant $pr - q^2 - ps^2 + 2qrs -r^3$ of the above $3\times 3$ matrix, is non-zero, then
\begin{equation*}
\left\lbrace 
\begin{pmatrix}
\alpha \beta, & \alpha + \beta, & 1
\end{pmatrix}
: \alpha \in \RR
\right\rbrace
\end{equation*}
also lies in a $1$-dimensional subspace of $\RR^3$, in which case both $\alpha \beta$ and $\alpha + \beta$ are constants depending only on $\delta$, say $\alpha \beta = c_1$ and $\alpha + \beta = c_2$.
But then $\alpha$ is a root of the fixed quadratic equation $x^2 - c_2x + c_1 = 0$, a contradiction.
Hence, the catalecticant vanishes, and $\delta$ is of the first species by Lemma~\ref{lem:first}.
\end{proof}

From Lemmas~\ref{lem:intermediate2},~\ref{lem:conics}, and~\ref{lem:quartics}, we see that up to at most $O(K)$ points, the set $P$ lies on a plane, two disjoint conic sections of an irreducible quadric (which by applying a projective transformation if necessary we can assume to be two disjoint circles on a sphere), or a space quartic of the first species.

Noting from Section~\ref{ssec:groups} that only elliptic and acnodal space quartics admit arbitrarily large finite subgroups,
Lemmas~\ref{lem:conics2} and~\ref{lem:quartics2}, which are proved in a similar way as~\cite{LMMSSZ17}*{Lemmas~5.5 and~5.6}, respectively, complete the proof of Theorem~\ref{thm:strong}.
Note that these are analogues of Lemmas~\ref{lem:7.4} and~\ref{lem:7.2} for ordinary planes.

\begin{lemma}\label{lem:conics2}
Let $P$ be a set of $n$ points in $\RR\PP^3$ with no three collinear, spanning at most $Kn^2$ ordinary planes, and suppose $n \ge CK$ for some sufficiently large absolute constant $C$.
Suppose all but at most $K$ points of $P$ lie on two disjoint circles, with $n/2 \pm O(K)$ points of $P$ on each circle.
Then, up to a projective transformation, $P$ differs in at most $O(K)$ points from a prism or an antiprism.
\end{lemma}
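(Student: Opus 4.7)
By Lemma~\ref{lem:conics} and a further projective transformation, we may assume that $\sigma_1$ and $\sigma_2$ are unit circles in parallel planes $\Pi_1=\{z=1\}$ and $\Pi_2=\{z=-1\}$, both centred on the $z$-axis. Write $m:=|P\cap\sigma_1|=n/2\pm O(K)$, and let $P'\subseteq P$ denote the set of points through which at most $9Kn$ ordinary planes pass, so $|P'|\ge 2n/3$ and hence $|P'\cap\sigma_1|=\Omega(n)$. The plan is to project from a suitable $p\in P'\cap\sigma_1$ onto $\Pi_2$, apply Green and Tao's Lemma~\ref{lem:7.4} in the resulting plane, and pull back to recognise the prism or antiprism structure.

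\textbf{Projection and application of Lemma~\ref{lem:7.4}.} Every chord of $\sigma_1$ through $p$ lies in $\Pi_1$ and is parallel to $\Pi_2$, so projecting from $p$ sends $\sigma_1\setminus\{p\}$ onto the line at infinity $\ell$ of $\Pi_2$ (each $q\in\sigma_1\setminus\{p\}$ going to the point at infinity of $\Pi_2$ in the direction of the chord $pq$). Points of $\sigma_2\subset\Pi_2$ are fixed by the projection, and the at most $K$ points of $P$ off $\sigma_1\cup\sigma_2$ project somewhere. Thus $\pi_p(P\setminus\{p\})$ has $m\pm O(K)$ points on $\sigma_2$, $m-1\pm O(K)$ points on $\ell$, $O(K)$ extras, and spans at most $9Kn$ ordinary lines. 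Lemma~\ref{lem:7.4} then yields that $\pi_p(P\setminus\{p\})$ differs in at most $O(K)$ points from a regular $m$-gon inscribed in $\sigma_2$ at some phase $\phi$, together with the $m$ points at infinity of $\Pi_2$ corresponding to its diagonals. Since any projective transformation sending a circle to $\sigma_2$ and a line to $\ell$ is a Euclidean similarity, this $m$-gon is genuinely regular in the Euclidean sense.

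\textbf{Pullback and angular matching.} The projection fixes $\sigma_2$ pointwise, so $P\cap\sigma_2$ agrees up to $O(K)$ points with the regular $m$-gon of phase $\phi$. Moreover, the $m$ diagonal directions of this $m$-gon must, up to $O(K)$ errors, be realised as the chord directions from $p$ to the other points of $P\cap\sigma_1$. A short trigonometric computation in angular coordinates---the chord joining the points of angle $\alpha$ and $\theta$ on the unit circle has direction $(-\sin((\alpha+\theta)/2),\cos((\alpha+\theta)/2))$, while the diagonal joining the vertices of angle $\phi+2k\pi/m$ and $\phi+2k'\pi/m$ of the $m$-gon has direction $(-\sin(\phi+(k+k')\pi/m),\cos(\phi+(k+k')\pi/m))$---shows that if $p$ sits at angle $\alpha$ on $\sigma_1$, the other points of $P\cap\sigma_1$ lie (up to $O(K)$ exceptions) at the angles $2\phi-\alpha+2s\pi/m\pmod{2\pi}$ for $s=0,\dots,m-1$, that is, at the vertices of a regular $m$-gon on $\sigma_1$ of phase $2\phi-\alpha$.

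\textbf{Consistency and conclusion.} Since $|P'\cap\sigma_1|=\Omega(n)$, comparing the $m$-gons obtained from two different choices of $p\in P'\cap\sigma_1$ forces $P\cap\sigma_1$ itself to differ from a single regular $m$-gon of some phase $\alpha_0$ by $O(K)$ points. Imposing that $p$, at angle $\alpha_0+2k_0\pi/m$, fits into the derived set of angles yields $2\alpha_0\equiv 2\phi\pmod{2\pi/m}$, i.e.\ $\alpha_0\equiv\phi\pmod{\pi/m}$. The two residue classes $\alpha_0\equiv\phi\pmod{2\pi/m}$ and $\alpha_0\equiv\phi+\pi/m\pmod{2\pi/m}$ correspond exactly to a prism and an antiprism, respectively, and combining with the $O(K)$ exceptional points of $P$ off $\sigma_1\cup\sigma_2$ completes the proof. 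The main technical difficulty is the careful propagation of the $O(K)$ error terms through the projection and across the different choices of $p$, together with the discretised phase matching modulo $\pi/m$; both are handled exactly as in the analogous circle-based argument of \cite{LMMSSZ17}*{Lemma~5.5}.
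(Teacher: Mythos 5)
Your argument is correct and is essentially the proof the paper intends: the paper only remarks that Lemma~\ref{lem:conics2} is ``proved in a similar way as \cite{LMMSSZ17}*{Lemma~5.5}'', and your projection from a point of $P'\cap\sigma_1$, application of Lemma~\ref{lem:7.4}, and phase-matching modulo $\pi/m$ is exactly that adaptation, with the normalisation to coaxial circles justified by Lemma~\ref{lem:conics}. The only points worth spelling out are that the transformation in Lemma~\ref{lem:7.4} must carry $\sigma_2$ to the circumscribed circle and $\ell$ to the line at infinity (since each carries $\Omega(n)$ points of the configuration), which is why it is a similarity, and that the chosen $p$ should itself be a non-exceptional vertex so the congruence $2\alpha_0\equiv 2\phi\pmod{2\pi/m}$ follows.
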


\begin{lemma}\label{lem:quartics2}
Let $P$ be a set of $n$ points in $\RR\PP^3$ with no three collinear, spanning at most $Kn^2$ ordinary planes, and suppose $n \ge CK$ for some sufficiently large absolute constant $C$.
Suppose all but at most $K$ points of $P$ lie on a space quartic $\delta$ of the first species.  
Then $P$ differs in at most $O(K)$ points from a coset of a subgroup of $\delta^*$, the smooth points of $\delta$.
In particular, $\delta$ is either an elliptic or acnodal space quartic.
\end{lemma}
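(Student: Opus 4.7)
The plan is to project from a well-chosen point $p \in P \cap \delta^\ast$ into a plane, apply Green and Tao's planar-cubic result (Lemma~\ref{lem:7.2}) to the projection, and then pull the resulting coset structure back to $\delta^\ast$ via the fact that projection from a smooth point on a quartic of the first species is an isomorphism of algebraic groups between the smooth locus of $\delta$ and that of the image cubic.

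Let $P'$ be the set of points $p \in P \cap \delta^\ast$ through which at most $9Kn$ ordinary planes of $P$ pass. Since $P$ spans at most $Kn^2$ ordinary planes and $\delta$ has only $O(1)$ singular points, a simple averaging argument gives $|P'| \ge 2n/3 - O(K)$. By Lemma~\ref{lem:projection1}, we may further choose $p \in P'$ so that $\pi_p$ is generically one-to-one on $\delta$, making $\gamma := \overline{\pi_p(\delta \setminus \{p\})}$ an irreducible planar cubic. Then $\pi_p(P \setminus \{p\})$ is a set of $n-1$ planar points spanning at most $9Kn$ ordinary lines, with all but $O(K)$ of them lying on $\gamma$. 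Lemma~\ref{lem:7.2} then yields that $\pi_p(P \setminus \{p\})$ differs in $O(K)$ points from a coset $X$ of a finite subgroup of $\gamma^\ast$, and that $\gamma$ is elliptic or acnodal.

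To lift this back, recall from Section~\ref{ssec:groups} that $\delta^\ast$ carries a group law $\oplus$ characterised by: four smooth points are coplanar if and only if they sum to a fixed element. Three points $a, b, c \in \delta^\ast \setminus \{p\}$ project to collinear points of $\gamma$ if and only if $\{p, a, b, c\}$ is coplanar, i.e.\ if and only if $a \oplus b \oplus c$ equals a fixed element of $\delta^\ast$ determined by $p$. With a suitable choice of identity on $\gamma^\ast$, the restriction of $\pi_p$ to $\delta^\ast \setminus \{p\}$ becomes a group homomorphism into $\gamma^\ast$, and being generically one-to-one and birational, it is an isomorphism of algebraic groups onto its image. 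Hence the preimage of $X$ under $\pi_p$ is a coset of a finite subgroup of $\delta^\ast$. Combining this with the $O(K)$ errors from the projection step and the $O(K)$ points of $P$ not on $\delta$, we conclude that $P$ differs in at most $O(K)$ points from a coset of a finite subgroup of $\delta^\ast$.

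By the classification of groups on space quartics of the first species in Section~\ref{ssec:groups}, only the elliptic and acnodal types admit arbitrarily large finite subgroups: cuspidal quartics have group $(\RR, +)$, and crunodal ones have $(\RR^\ast, \cdot) \cong \RR \times \ZZ_2$, neither of which contains large finite subgroups. Since $n \ge CK$ is large, the subgroup we found must be large, and hence $\delta$ is either elliptic or acnodal. The main obstacle will be the third paragraph: precisely, verifying that $\pi_p$ induces a genuine isomorphism of algebraic groups (not merely a generic birational map) between the smooth loci of $\delta$ and $\gamma$, especially in the singular (acnodal) case where one must track the image of the singular point carefully, and checking that the pullback of a single coset is itself a single coset rather than a disjoint union of several.
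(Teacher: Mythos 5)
Your proposal is correct in outline, but it takes a genuinely different route from the one the paper intends. The paper gives no inline proof of this lemma: it states that Lemmas~\ref{lem:conics2} and~\ref{lem:quartics2} ``are proved in a similar way as'' Lemmas~5.5 and~5.6 of the ordinary-circles paper, and describes them as \emph{analogues} of Lemma~\ref{lem:7.2} for ordinary planes rather than corollaries of it. That intended argument is intrinsic: one sets $A = P\cap\delta^*$, uses the group law on $\delta^*$ (four smooth points coplanar iff they sum to a fixed element, Section~\ref{ssec:groups}) to show that for all but $O(Kn^3)$ triples of $A$ the fourth intersection point of their plane with $\delta$ again lies in $A$, and then invokes Green and Tao's approximate-coset proposition to conclude $A$ is within $O(K)$ of a coset. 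You instead project from a well-chosen $p\in P'\cap\delta^*$ and cite the planar Lemma~\ref{lem:7.2} as a black box, then pull the coset back through $\pi_p$. What your route buys is reuse of the planar result; what it costs is exactly the verification you flag at the end, which is the real mathematical content of your proof: that $\pi_p$ restricted to $\delta^*\setminus\{p\}$ is an injective, group-compatible map onto all but finitely many points of $\gamma^*$. This does hold --- since $\delta$ has geometric genus $0$ or $1$ and $\gamma$ is an irreducible cubic with at most one singular point, the only identification $\pi_p$ can make is at the image of the singular point of $\delta$, so $\pi_p$ is injective on $\delta^*\setminus\{p\}$; and the coplanarity/collinearity correspondence makes $\pi_p$ a group isomorphism up to translation, so the preimage of a single coset is a single coset. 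Two small points of hygiene: Lemma~\ref{lem:7.2} must be applied with $K$ replaced by $O(K)$ (harmless), and your final ``large finite subgroup'' argument for ruling out cuspidal and crunodal quartics is sound and is in fact exactly the remark the paper makes just before stating the lemma.
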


\section{Coplanar quadruples}\label{sec:quad}

The following is a special case of Raz, Sharir, and De Zeeuw's $4$-dimensional generalisation of the Elekes-Szab\'o theorem~\cite{RSZ16}, which we use to prove Theorem~\ref{thm:quad}.

\begin{theorem}\label{thm:ES}
Let $F \in \CC[t_1, t_2, t_3, t_4]$ be irreducible of degree $d$, with no $\partial F / \partial t_i$ identically zero.
Then either for all $A \subset \CC$ with $|A| = n$, we have $|Z_\CC(F) \cap A^4| = O(n^{8/3})$, 
or there exists a $2$-dimensional subvariety $Z_0 \subset Z_\CC(F)$ such that for any $(s_1, s_2, s_3, s_4) \in Z_\CC(F) \setminus V_0$, there exist open neighbourhoods $U_i$ of $s_i$ and injective analytic functions $\phi_i: U_i \rightarrow \CC$ such that
\begin{equation*}
F(t_1, t_2, t_3, t_4) = 0 \Longleftrightarrow \phi_1(t_1) + \phi_2(t_2) + \phi_3(t_3) + \phi_4(t_4) = 0,
\end{equation*}
for all $(t_1, t_2, t_3, t_4) \in U_1 \times U_2 \times U_3 \times U_4$.
\end{theorem}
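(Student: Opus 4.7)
The plan is to follow the Elekes--Szab\'o framework, adapted to four variables as in~\cite{RSZ16}. The starting point is to translate the count of $(t_1,t_2,t_3,t_4) \in A^4$ with $F=0$ into an incidence problem. For each pair $(t_1,t_2) \in A \times A$, the equation $F(t_1,t_2,t_3,t_4)=0$ cuts out an algebraic curve $\gamma_{t_1,t_2}$ of bounded degree in the $(t_3,t_4)$-plane (generically; one has to remove a lower-dimensional locus where the fibre is special, which is where the exceptional subvariety $Z_0$ first appears). Counting coplanar quadruples thus reduces to counting incidences between the $n^2$ curves $\{\gamma_{t_1,t_2}\}$ and the $n^2$ points of $A \times A$.

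First I would apply a Szemer\'edi--Trotter-type bound for points and bounded-degree curves in~$\CC^2$ (in the spirit of Solymosi--Tao or Sharir--Solomon). Provided the $n^2$ curves have only $O(1)$ pairwise intersections generically, the bound $O(m^{4/3})$ on incidences between $m$ curves and $m$ points yields exactly $O((n^2)^{4/3}) = O(n^{8/3})$. This is the first alternative in the theorem. The strategy is thus to show that the \emph{only} obstruction to the incidence bound is a genuine algebraic degeneracy of the family $\{\gamma_{t_1,t_2}\}$.

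Accordingly, the next step is to analyse the situation in which the curves fail to be in general position: many of them coincide, or they all belong to a low-dimensional algebraic family of curves. Iterating the same analysis after swapping the roles of the coordinate pairs $(t_1,t_2)$ and $(t_3,t_4)$ (and likewise for the other pairings), one obtains that all four projections $\pi_{ij}\colon Z_\CC(F) \to \CC^2$ exhibit the same kind of degeneracy. Following the Elekes--Szab\'o philosophy, such a coincidence can be encoded as an algebraic correspondence whose iterates generate a low-dimensional group; by the Abel--Jacobi / Weil group-chunk arguments used in~\cite{RSZ16}, this group is, locally, an analytic abelian group. Pulling back the addition law through the four coordinate projections produces the analytic functions $\phi_i$ and the local identity $\phi_1(t_1)+\phi_2(t_2)+\phi_3(t_3)+\phi_4(t_4)=0$.

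The main obstacle is the last step: turning a quantitative failure of an incidence bound into the rigid algebraic conclusion that $F$ locally factors through an additive four-fold sum. This is the heart of the Elekes--Szab\'o method and requires a differential-algebraic argument controlling the directions in which $Z_\CC(F)$ can be swept by one-parameter families, together with excising the exceptional locus $Z_0$ along which these differential identities degenerate. The assumption that no $\partial F/\partial t_i$ vanishes identically is precisely what guarantees that all four projections $\pi_{ij}$ are dominant, so that the group-chunk argument can be carried out symmetrically in all four variables.
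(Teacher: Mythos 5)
There is nothing in the paper to check your argument against: Theorem~\ref{thm:ES} is not proved in this paper at all. The authors state it as ``a special case of Raz, Sharir, and De Zeeuw's $4$-dimensional generalisation of the Elekes--Szab\'o theorem''~\cite{RSZ16} and use it as a black box in the proof of Theorem~\ref{thm:quad}. So the relevant question is whether your proposal would stand on its own as a proof of the cited result, and it does not: it is a roadmap of the Elekes--Szab\'o strategy, not a proof. You set up the reduction to incidences between the $n^2$ points of $A\times A$ and the $n^2$ curves $\gamma_{t_1,t_2}$, and you correctly identify that $O((n^2)^{4/3})=O(n^{8/3})$ is the target, but every step that carries actual content is deferred. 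To apply a Pach--Sharir/Solymosi--Tao type bound you need not only that two curves of the family meet in $O(1)$ points but also that two points lie on $O(1)$ curves of the family (two degrees of freedom); you mention only the first condition, and the failure of the second is precisely the degenerate case that must be analysed, not assumed away. Moreover the incidence count with multiplicities (many pairs $(t_1,t_2)$ giving the same curve) is where the quantitative bookkeeping lives, and it is absent.

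The decisive gap is the one you yourself flag as ``the main obstacle'': converting the failure of the incidence bound into the local identity $\phi_1(t_1)+\phi_2(t_2)+\phi_3(t_3)+\phi_4(t_4)=0$ off a two-dimensional exceptional subvariety $Z_0$. Invoking ``Abel--Jacobi / Weil group-chunk arguments used in~\cite{RSZ16}'' is a citation to the very theorem you are supposed to be proving, so the proposal is circular at its core. (As a historical aside, the Raz--Sharir--De Zeeuw proof deliberately avoids the model-theoretic group-chunk machinery of the original Elekes--Szab\'o argument in favour of a more elementary analytic/derivative-based construction of the $\phi_i$, so even the attribution of the method is slightly off.) If your intention was to reprove the theorem, you would need to supply this entire second half; if your intention was to justify its use in the paper, the honest move --- and the one the authors make --- is simply to cite~\cite{RSZ16}.
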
 

\begin{proof}[Proof of Theorem~\ref{thm:quad}]
We first show that if an $n$-point set $P$ on a rational space quartic $\delta$ in $\CC\PP^3$ does not span $O(n^{8/3})$ coplanar quadruples, then $\delta$ is of the first species.

By Proposition~\ref{prop:param}, we may assume $\delta$ is given by the parametrisation~\eqref{eqn:param}.
Lemma~\ref{lem:coplanar} says four points parametrised by $t_1, t_2, t_3, t_4$ are coplanar if and only if \eqref{eqn:coplanar} holds.
It is clear that $F$ is not independent of any $t_i$ (otherwise $\delta$ would be planar).
Since $P$ does not span $O(n^{8/3})$ coplanar quadruples, Theorem~\ref{thm:ES} gives the existence of injective analytic $\phi_1, \phi_2, \phi_3, \phi_4$ such that $F = 0$ if and only if $\phi_1(t_1) + \phi_2(t_2) + \phi_3(t_3) + \phi_4(t_4) = 0$.
In particular, on the hypersurface $F=0$, we can express $t_4 = t_4 (t_1, t_2, t_3)$ as a function of $t_1, t_2, t_3$:
\begin{equation}\label{t4}
t_4(t_1,t_2,t_3) = -\dfrac{s t_{1} t_2t_3 + r(t_{1}t_2 + t_{1}t_3 + t_{2}t_3) + q(t_{1} + t_{2} + t_3) + p}{ t_{1} t_{2}t_3 + s(t_{1}t_2 + t_{1}t_3 + t_2t_3) + r(t_{1} + t_2 + t_3) + q}.
\end{equation}
We thus have for all $(t_1,t_2,t_3)\in U_1\times U_2\times U_3$ that
\begin{equation*}
\phi_1(t_1) + \phi_2(t_2) + \phi_3(t_3) + \phi_4(t_4(t_1,t_2,t_3)) = 0.
\end{equation*}
Partial differentiation with respect to $t_i$ for $i = 2,3$ gives
\begin{equation*}
\phi_i'(t_i) + \phi_4'(t_4) \frac{\partial t_4}{\partial t_i} = 0,
\end{equation*}
and so the quotient $\left( \partial t_4 / \partial t_2 \right) / \left( \partial t_4 / \partial t_3 \right)$ is independent of $t_1$.
The numerator of the partial derivative of this quotient with respect to $t_1$ is thus identically zero.
If we substitute \eqref{t4} into \[ \frac{\partial}{\partial t_1}\left(\frac{\partial t_4}{\partial t_2} \middle/ \frac{\partial t_4}{\partial t_3}\right) = 0,\]
we obtain (with the help of a computer algebra system such as SageMath)
\begin{equation*}
\frac{(pr - q^2 - ps^2 + 2qrs - r^3)F(t_1, t_1, t_2, t_3)(t_3 - t_2)}{h(t_1,t_2)^2} = 0,
\end{equation*}
where
\begin{align*}
h(t_1, t_2) &= (s^{2} - r) t_{1}^{2} t_{2}^{2} + (r s - q) t_1 t_2(t_{1} + t_{2}) + (r^{2} -qs) (t_{1}^{2} + t_2^2)\\
&\mathrel{\phantom{=}} \mbox{} + (r^{2} - p) t_{1} t_{2} + (q r - p s) (t_{1} + t_{2}) + q^{2} - p r.
\end{align*}
Since $t_1, t_2, t_3$ are arbitrary in $(U_1\times U_2\times U_3)\setminus (Z_{\CC}(h)\times\CC)$, we obtain that the catalecticant $pr - q^2 - ps^2 + 2qrs - r^3$ vanishes.
By Lemma \ref{lem:first}, the rational space quartic $\delta$ is thus of the first species, as desired.

For the converse, suppose that $\delta$ is of the first species.
Then, as is well known (and explicitly demonstrated in Lemma~\ref{lem:group} below), the smooth points $\delta^*$ carry a group structure such that four points are coplanar if and only if their sum in the group is the identity.
If $\delta$ is nodal, then the group is isomorphic to the non-zero complex numbers under multiplication $(\CC^*,\cdot)$.
If $\delta$ is cuspidal, then the group is isomorphic to the complex numbers under addition $(\CC,+)$.
In both groups it is trivial to find $n$ elements such that there are $\Theta(n^3)$ quadruples of distinct elements that sum to $0$.
In the multiplicative case, we can take the $n$-th roots of unity, and in the additive case, we can take the $n$ integers closest to $0$.
\end{proof}

If a rational space quartic $\delta$ of the first species is given by the parametrisation~\eqref{eqn:param}, we can also find the $\phi_i$'s in Theorem~\ref{thm:ES} explicitly.
For convenience, in the next lemma we identify $\CC\PP^1$ with the affine line $\CC$ together with a point $\infty$ at infinity.
\begin{lemma}\label{lem:group}
Let $\delta$ be a rational space quartic of the first species in $\CC\PP^3$ given by the parametrisation~\eqref{eqn:param}.

If $\delta$ is nodal, then there exists a parametrisation $\phi\colon\CC\PP^1\to\delta$ such that $\phi(0)=\phi(\infty)$ is the node of $\delta$, and any four points $\phi(t_1), \phi(t_2),\phi(t_3),\phi(t_4)$ on $\delta\setminus\{\phi(0)\}$ are coplanar if and only if $t_1t_2t_3t_4=1$.

If $\delta$ is cuspidal, then there exists a parametrisation $\phi\colon\CC\PP^1\to\delta$ such that $\phi(\infty)$ is the node of $\delta$, and any four points $\phi(t_1), \phi(t_2),\phi(t_3),\phi(t_4)$ on $\delta\setminus\{\phi(\infty)\}$ are coplanar if and only if $t_1+t_2+t_3+t_4=0$.
\end{lemma}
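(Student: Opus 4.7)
The plan is to apply Sylvester's theorem (Theorem~\ref{thm:sylvester}) to the fundamental quartic $g$ of $\delta$. Because $\delta$ is of the first species, Lemma~\ref{lem:first} gives $\cat(g) = 0$, so $g$ takes one of three Sylvester forms: $L^4$, $L_1^4 + L_2^4$ with $L_1, L_2$ linearly independent, or $L_1 L_2^3$ with $L_1, L_2$ linearly independent. The pure fourth power is ruled out, since expanding $g = (a\lambda + b\mu)^4$ and comparing coefficients with the normalised $g = \lambda^4 + 4s\lambda^3\mu + 6r\lambda^2\mu^2 + 4q\lambda\mu^3 + p\mu^4$ forces $r = s^2$, $q = s^3$, $p = s^4$, contradicting Proposition~\ref{prop:param}. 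The two remaining Sylvester cases will produce, respectively, the nodal and cuspidal subcases.

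The essential observation is that $F$ of Lemma~\ref{lem:coplanar} is the unique symmetric polynomial of the given shape whose restitution is $f(t) = g(t,1)$; for each Sylvester form of $g$, this uniquely determines $F$ by polarisation. Writing $L_i(t) := L_i(t,1)$, direct computation gives
\begin{equation*}
F(t_1,t_2,t_3,t_4) = \prod_{i=1}^{4} L_1(t_i) + \prod_{i=1}^{4} L_2(t_i) \qquad \text{if } g = L_1^4 + L_2^4,
\end{equation*}
\begin{equation*}
F(t_1,t_2,t_3,t_4) = \frac{1}{4}\sum_{i=1}^{4} L_1(t_i) \prod_{j\neq i} L_2(t_j) \qquad \text{if } g = L_1 L_2^3.
\end{equation*}
Let $\alpha$ and $\beta$ be the roots in $\CC$ of $L_1$ and $L_2$; these are distinct because $L_1, L_2$ are linearly independent.

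In the sum-of-fourth-powers case, setting $u_i = c \cdot L_1(t_i)/L_2(t_i)$ for any $c \in \CC$ with $c^4 = -1$ converts the condition $F = 0$ into $u_1 u_2 u_3 u_4 = 1$; this defines a M\"obius reparametrisation $t \mapsto u$ with $u = 0 \leftrightarrow t = \alpha$ and $u = \infty \leftrightarrow t = \beta$. The identity $F(\alpha, \beta, t_3, t_4) \equiv 0$ in $t_3, t_4$, combined with Lemma~\ref{lem:coplanar} and the non-planarity of $\delta$, forces the original parametrisation $\phi_0$ of $\delta$ to satisfy $\phi_0(\alpha) = \phi_0(\beta)$; since $\alpha \neq \beta$, this common image must be the node, and $\phi(u) := \phi_0(t(u))$ gives a parametrisation with $\phi(0) = \phi(\infty)$ at the node and coplanarity equivalent to $u_1 u_2 u_3 u_4 = 1$. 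In the $L_1 L_2^3$ case, the reparametrisation $u_i = L_1(t_i)/L_2(t_i)$ converts $F = 0$ (after dividing through by $\prod_j L_2(t_j)$) into $u_1 + u_2 + u_3 + u_4 = 0$, with $u = \infty \leftrightarrow t = \beta$. The identity $F(\beta, \beta, \beta, t_4) \equiv 0$ in $t_4$ then forces $\phi_0(\beta)$ to be a singular point (else the osculating plane at $\phi_0(\beta)$ would contain all of $\delta$), and hence the cusp.

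The main delicacy I anticipate is justifying that the two non-trivial Sylvester cases correspond bijectively to the nodal and cuspidal singularity types. Since by Section~\ref{ssec:classification} a rational space quartic of the first species has a unique singularity which is either a node or a cusp, and since the respective identities above exhibit the appropriate singularity in each Sylvester case, the correspondence is forced by the hypothesis of the lemma.
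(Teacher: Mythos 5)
Your proposal is correct and follows essentially the same route as the paper: vanishing catalecticant via Lemma~\ref{lem:first}, Sylvester's theorem, elimination of the pure fourth power using Proposition~\ref{prop:param}, and polarisation of the two remaining forms to read off the multiplicative and additive laws after a M\"obius reparametrisation. The only difference is that you spell out why the exceptional parameters land on the node (resp.\ cusp) via the identities $F(\alpha,\beta,\cdot,\cdot)\equiv 0$ and $F(\beta,\beta,\beta,\cdot)\equiv 0$, a point the paper leaves implicit.
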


\begin{proof}
Since $\delta$ is of the first species, the catalecticant of its fundamental quartic given by \eqref{eqn:quartic} vanishes by Lemma~\ref{lem:first}.
Applying Theorem~\ref{thm:sylvester} and dehomogenising, we can write $f(t)$ as one of
\[(a t + b)^4, (a_1 t + b_1)^4 + (a_2 t + b_2)^4, (a_1 t + b_1)(a_2 t + b_2)^3,\]
for some $a, b, a_1,b_1,a_2,b_2 \in \CC$, where $a_1b_2 \ne b_1a_2$.

We cannot have $f(t) = (at + b)^4$, otherwise we have $r = s^2$, $q = s^3$, and $p = s^4$, contradicting the parametrisation~\eqref{eqn:param}.

If $f(t) = (a_1 t + b_1)^4 + (a_2 t + b_2)^4$, then its so-called polarisation \cite{D03}*{Section~1.2} will be equal to
\begin{equation*}
F(t_1,t_2,t_3,t_4) = \prod_{i=1}^4 (a_1 t_i + b_1) + \prod_{i=1}^4 (a_2 t_i + b_2).
\end{equation*}
So $F = 0$ if and only if
\begin{equation*}
\frac{a_1 t_1 + b_1}{a_2 t_1 + b_2} \cdot \frac{a_1 t_2 + b_1}{a_2 t_2 + b_2} \cdot \frac{a_1 t_3 + b_1}{a_2 t_3 + b_2} \cdot \frac{a_1 t_4 + b_1}{a_2 t_4 + b_2} = -1.
\end{equation*}
This is the nodal case, with the node corresponding to $\phi(t)$ where the fractional linear transformation $(a_1t+b_1)/(a_2t+b_2) = 0,\infty$.
We can make a linear change of variables so that this becomes $t_1t_2t_3t_4=1$, and the node is $\phi(0)=\phi(\infty)$.

Otherwise $f(t) = (a_1 t + b_1)(a_2 t + b_2)^3$.
Since the catalecticant is an invariant of the fundamental quartic of $\delta$, we can use a linear change of variables to assume $f(s) = (cs-d)s^3$, for some $c,d \in \CC$.
We then obtain the polarisation
\begin{equation*}
F(s_1, s_2, s_3, s_4) = cs_1s_2s_3s_4 - d \sum_{i < j <k} s_is_js_k = 0,
\end{equation*} 
which can be written as $\phi(s_1) + \phi(s_2) + \phi(s_3) + \phi(s_4) = 0$, where
\begin{equation*}
\phi(s) = \frac{c}{4} - \frac{d}{s}.
\end{equation*}
This is the case where $\delta$ is cuspidal.
\end{proof}

\begin{bibdiv}
\begin{biblist}

\bib{ACGH}{book}{
   author={Arbarello, E.},
   author={Cornalba, M.},
   author={Griffiths, P. A.},
   author={Harris, J.},
   title={Geometry of Algebraic Curves. Vol. I},
   series={Grundlehren der Mathematischen Wissenschaften}, 
   volume={267},
   publisher={Springer},
   date={1985},
}

\bib{B17}{article}{
   author={Ball, Simeon},
   title={On sets defining few ordinary planes},
   journal={Discrete Comput.\ Geom.},
   volume={60},
   date={2018},
   pages={220--253},
}

\bib{BJ18}{article}{
   author={Ball, Simeon},
   author={Jimenez, Enrique},
   title={On sets defining few ordinary solids},
   note={arXiv:1808.06388},
}


\bib{Ballico2003}{article}{
   author={Ballico, E.},
   title={Special projections of projective varieties},
   journal={Int. Math. J.},
   volume={3},
   date={2003},
   pages={11--12},
   issn={1311-6797},
}

\bib{Ballico2004}{article}{
   author={Ballico, E.},
   title={Special inner projections of projective varieties},
   journal={Ann. Univ. Ferrara Sez. VII (N.S.)},
   volume={50},
   date={2004},
   pages={23--26},
   issn={0430-3202},
}

\bib{LeBarz82}{article}{
   author={Le Barz, Patrick},
   title={Formules multis\'ecantes pour les courbes gauches quelconques},
   language={French},
   conference={
      title={Enumerative geometry and classical algebraic geometry (Nice, 1981)},
   },
   book={
      series={Progr. Math.},
      volume={24},
      publisher={Birkh\"auser, Boston, Mass.},
   },
   date={1982},
   pages={165--197},
}

\bib{Bertin}{article}{
   author={Bertin, Marie-Am\'elie},
   title={On the singularities of the trisecant surface to a space curve},
   journal={Matematiche (Catania)},
   volume={53},
   date={1998},
   pages={15--22 (1999)},
}

\bib{B1871}{article}{
   author={Brill, A.},
   title={Ueber die Doppelpunkte von Curven im Raume, deren Geschlecht Null ist},
   date={1871},
   journal={Math. Ann.},
   volume={3},
   pages={456--458},
}

\bib{D03}{book}{
   author={Dolgachev, Igor},
   title={Lectures on Invariant Theory},
   publisher={Cambridge University Press},
   date={2003},
}

\bib{F1895}{article}{
   author={Forsyth, A. R.},
   title={On twisted quartics of the second species},
   journal={The Quarterly Journal of Pure and Applied Mathematics},
   volume={27},
   date={1895},
   pages={247--269},
}

\bib{F11}{article}{
   author={Furukawa, Katsuhisa},
   title={Defining ideal of the Segre locus in arbitrary characteristic},
   journal={J. Algebra},
   volume={336},
   date={2011},
   pages={84--98},
}

\bib{GT13}{article}{
   author={Green, Ben},
   author={Tao, Terence},
   title={On sets defining few ordinary lines},
   date={2013},
   journal={Discrete Comput.\ Geom.},
   volume={50},
   pages={409--468},
}

\bib{GP82}{article}{
   author={Gruson, Laurent},
   author={Peskine, Christian},
   title={Courbes de l'espace projectif: vari\'et\'es de s\'ecantes},
   language={French},
   conference={
      title={Enumerative geometry and classical algebraic geometry (Nice, 1981)},
   },
   book={
      series={Progr. Math.},
      volume={24},
      publisher={Birkh\"auser, Boston, Mass.},
   },
   date={1982},
   pages={1--31},
}

\bib{H92}{book}{
   author={Harris, J.},
   title={Algebraic Geometry: A First Course},
   publisher={Springer},
   date={1992},
}

\bib{H77}{book}{
   author={Hartshorne, R.},
   title={Algebraic Geometry},
   publisher={Springer},
   date={1977},
}

\bib{K99}{article}{
   author={Kanev, Vassil},
   title={Chordal varieties of {V}eronese varieties and catalecticant matrices},
   journal={J. Math. Sci.},
   volume={94},
   year={1999},
   pages={1114--1125},
}

\bib{KKT08}{article}{
   author={Kaminski, J. Y.},
   author={Kanel-Belov, A.},
   author={Teicher, M.},
   title={Trisecant lemma for nonequidimensional varieties},
   journal={J. Math. Sci.},
   volume={149},
   year={2008},
   pages={1087--1097},
}

\bib{Kollar}{book}{
   author={Koll\'ar, J\'anos},
   title={Lectures on Resolution of Singularities},
   series={Annals of Mathematics Studies},
   volume={166},
   publisher={Princeton University Press},
   date={2007},
}

\bib{LMMSSZ17}{article}{
   author={Lin, Aaron},
   author={Makhul, Mehdi},
   author={Mojarrad, Hossein Nassajian},
   author={Schicho, Josef},
   author={Swanepoel, Konrad},
   author={de Zeeuw, Frank},
   title={On sets defining few ordinary circles},
   journal={Discrete Comput. Geom.},
   volume={59},
   date={2018},
   pages={59--87},
   note={arXiv:1607.06597},
}

\bib{LS}{article}{
   author={Lin, Aaron},
   author={Swanepoel, Konrad},
   title={On sets defining few ordinary hyperplanes},
   note={arXiv:1808.10849},
}
   
\bib{M51}{article}{
   author={Motzkin, T.},
   title={The lines and planes connecting the points of a finite set},
   date={1951},
   journal={Trans. Amer. Math. Soc.},
   volume={70},
   pages={451--464},
}

\bib{M10}{thesis}{
   author={Muntingh, Georg},
   title={Topics in polynomial interpolation theory},
   type={Ph.D. Dissertation},
   address={University of Oslo},
   date={2010},
}

\bib{PS10}{article}{
   author={Purdy, George B.},
   author={Smith, Justin W.},
   title={Lines, circles, planes and spheres},
   journal={Discrete Comput. Geom.},
   volume={44},
   date={2010},
   pages={860--882},
}

\bib{R1900}{article}{
   author={Richmond, H.},
   title={Rational space-curves of the fourth order},
   year={1900},
   journal={Trans. Camb. Phil. Soc.},
   volume={19},
   pages={132--150},
}

\bib{RSZ16}{article}{
   author={Raz, Orit E.},
   author={Sharir, Micha},
   author={de Zeeuw, Frank},
   title={The {E}lekes-{S}zab{\'o} {T}heorem in four dimensions},
   journal={Israel J. Math.},
   year={2018},
   volume={227},
   pages={663--690},
}

\bib{S36}{article}{
   author={Segre, Beniamino},
   title={On the locus of points from which an algebraic variety is projected multiplly},
   journal={Proc. Phys.-Math. Soc. Japan (3)},
   year={1936},
   volume={18},
   pages={425--426},
}

\bib{Severi}{book}{
   author={Severi, Francesco},
   title={Vorlesungen \"uber algebraische Geometrie: Geometrie auf einer
   Kurve, Riemannsche Fl\"achen, Abelsche Integrale},
   series={
   Bibliotheca Mathematica Teubneriana, Band 32},
   publisher={Teubner, Lepizig},
   date={1921},
}

\bib{S09}{book}{
   author={Silverman, J. H.},
   title={The Arithmetic of Elliptic Curves},
   edition={Second Edition},
   publisher={Springer},
   date={2009},
}

\bib{S39}{book}{
   author={Sommerville, D. M. Y.},
   title={Analytic Geometry of Three Dimensions},
   publisher={Cambridge University Press},
   date={1939},
}

\bib{S1851}{article}{
   author={Sylvester, J. J.},
   title={On a remarkable discovery in the theory of canonical forms and of hyperdeterminants},
   date={1851},
   journal={Philosophical Magazine},
   volume={2},
   pages={391--410},
   note={Paper 41 in The Collected Mathematical Papers of James Joseph Sylvester, Cambridge University Press, 1904},
}

\bib{T36}{book}{
   author={Telling, H. G.},
   title={The Rational Quartic Curve in Space of Three and Four Dimensions},
   publisher={Cambridge University Press},
   date={1936},
   note={Re-issued 2015}
}

\bib{W1871}{article}{
   author={Weyr, E.},
   title={Ueber rationale Curven vierter Ordnung},
   date={1871},
   journal={Math. Ann.},
   volume={4},
   pages={243--244},
}

\bib{W08}{article}{
	author={Wolfson, P.R.},
    date={2008},
    title={George Boole and the origins of invariant theory},
    journal={Historia Mathematica},
    volume={35},
    pages={37--46},
}

\end{biblist}
\end{bibdiv}

\end{document}